\numberwithin{figure}{section}
\newtheorem{thm}{Theorem}[section]
\newtheorem{prop}[thm]{Proposition}
\theoremstyle{definition}
\newtheorem{defi}[thm]{Definition}
\newtheorem{rema}[thm]{Remark}
\numberwithin{equation}{section}
\newcommand{\E}{{\mathcal E}}
\newif\ifshowhidden  
\newcommand{\Rmnum}[1]{\expandafter\@slowromancap\romannumeral #1@}
\begin{document}
	\baselineskip=17pt
	\setcounter{figure}{0}
	
	\title[Hodge-de Rham Theory on Higher-Dimensional Level-$L$ Sierpi\'{n}ski Gaskets]
{Hodge-de Rham Theory on Higher-Dimensional Level-$L$ Sierpi\'{n}ski Gaskets}

\author[S.-M. Ngai]{Sze-Man Ngai}
	\address{Beijing Institute of Mathematical Sciences and Applications, Huairou District, 101400, Beijing, China, and Key Laboratory of High Performance Computing and Stochastic Information
		Processing (HPCSIP) (Ministry of Education of China), College of Mathematics and Statistics, Hunan Normal University,
		Changsha, Hunan 410081, China.}
	\email{ngai@bimsa.cn}

\author[S.-H. Zhou]{Shui-Hong Zhou}
\address{Key Laboratory of High Performance Computing and Stochastic Information
		Processing (HPCSIP) (Ministry of Education of China), College of Mathematics and Statistics, Hunan Normal University, Changsha, Hunan 410081, China.}
\email{zsh20171101117@163.com}

\date{\today}
\subjclass[2010]{Primary 28A80, 58A14; Secondary 58A12,58A10. }
\keywords{Hodge-de Rham theory, higher-dimensional level-$l$ Sierpi\'{n}ski gasket, $k$-form, de Rham derivative, dual de Rham derivative, Hodge Laplacian}

\thanks{The authors are supported in part by the National Natural Science Foundation of China, grant 11771136, and Construct Program of the Key Discipline in Hunan Province. }

	\begin{abstract}
	  This paper extends the Hodge-de Rham theory of Aaron \textit{et al.} [Commun. Pure Appl. Anal. {\bf 13} (2014)] to higher-dimensional level-$l$ Sierpinski gaskets $SG_{\ell}^{n},$ providing a framework for analyzing differential forms and Laplacians on these fractal structures. We construct a sequence of graphs approximating $SG_{\ell}^{n}$ and define $k$-forms, de Rham derivatives, and their duals on these graphs. We prove that the extension of a $1$-form on a generation-$m$ graph to a $1$-form on a generation-$(m+1)$ graph is harmonic. We obtain a basis for the space of harmonic $1$-forms. We also explore the properties of $2$-forms on the level-$3$ Sierpinski gasket, under the assumptions that the $2$-forms are absolutely continuous with respect to the Kusuoka measure or the standard self-similar measure and that the Radon-Nikodym derivatives are continuous.
		
	\end{abstract}
	
	\maketitle

\section{INTRODUCTION}\label{S:IN}
	\setcounter{equation}{0}

In \cite{Aaron-Conn-Strichartz-Yu_2014}, Aaron \textit{et al.} introduced Hodge-de Rham theory for post critically finite (p.c.f) self-similar sets. They constructed $k$-forms, de Rham derivative $d_{k}$ and dual de Rham derivative $\delta_{k}$ on fractal graphs, and proved that the Laplacian defined on $0$-forms by using the Hodge-de Rham theory is equivalent to the Laplacian defined by Kigami (see \cite{Kigami_2001}). Kigami (see \cite{Kigami_1989}, \cite{Kigami_1993}) defined the Laplacian on the p.c.f. self-similar sets by using a renormalized limit of finite matrix Laplacians which is equivalent to its probabilistic definition as the infinitesimal generator of Brownian motion (see \cite{Barlow-Perkins_1988}, \cite{Goldstein_1987}, \cite{Kusuoka_1987}, \cite{Telcs_1989}). Aaron \textit{et al.} proved that for the Sierpinski gasket and the $3$-dimensional Sierpinski gasket, the extension of a $1$-form on a generation-$m$ graph to a $1$-form on a generation-$(m+1)$ graph is harmonic, and analyzed harmonic $1$-forms by constructing a basis for the space of harmonic $1$-forms. They also studied two classes of $2$-forms on the Sierpinski gasket, one with absolutely continuous Radon-Nikodym derivatives, and the other absolutely continuous with respect to the Kusuoka measure.

The main goal of this paper is to extend the work in Aaron \textit{et al.} to higher-dimensional level-$l$ Sierpinski gaskets $SG_{\ell}^{n}$ $($for $n,\ell\geq 2).$ By exploring potential extensions of Hodge-de Rham theory to higher-dimensional fractals, we hope this investigation may yield generalizations of existing results while contributing to a deeper understanding of spectral and geometric properties in higher dimensions. $SG_{\ell}^{n}$ are constructed by iterated function systems (IFSs) $\{F_{i}\}_{i=1}^{N_{\ell}^{n}}$ on $\mathbb{R}^{n},$ where $N_{\ell}^{n}$ is the number of the IFS maps on $SG_{\ell}^{n}.$ Let $G_{\ell}^{n,0}$ be an initial graph of $SG_{\ell}^{n}$ in $\mathbb{R}^{n},$ $E_{\ell,0}^{n,0}:=\{q_{i}\}$ be the set of vertices, and $E_{\ell,1}^{n,0}$ be the set of edges. Let $E_{\ell,0}^{n,1}:=\{F_{i}q_{j}\}$ be the image of the points in $E_{\ell,0}^{n,0}$ under the IFS maps, let $G_{\ell}^{n,1}$ be the graph with points in $E_{\ell,0}^{n,1}$ as vertices and the images of edges of $G_{\ell}^{n,0}$ under $\{F_{i}\}$ as its edges. Iterating this procedure we obtain a sequence of graphs $G_{\ell}^{n,1}, G_{\ell}^{n,2},...,$ and $SG_{\ell}^{n}$ is the limit of the graph $G_{\ell}^{n,m}.$ We let $E_{\ell,0}^{n,m},E_{\ell,1}^{n,m},E_{\ell,2}^{n,m}$ denote the sets of vertices $e_{0}^{m},$ edges $e_{1}^{m},$ and triangles $e_{2}^{m}$ on the graph $G_{\ell}^{n,m},$ respectively.

On $SG_{\ell}^{n},$ for any $m\geq 1,$ we define
$$W_{m}:=\Big \{1,2,...,N_{\ell}^{n}\Big \}^{m}=\Big \{\omega_{1}\omega_{2}\cdots\omega_{m}:\omega_{i}\in\{1,2,...,N_{\ell}^{n}\}\Big \}.$$
An element $\omega\in W_{m}$ is called a \textit{word} of length $m,$ and we denote $|\omega|$ as the \textit{length} of $\omega.$ For any $\omega\in W_{m},$ we define
$$F_{\omega}:=F_{\omega_{1}}\circ F_{\omega_{2}}\circ\cdots\circ F_{\omega_{m}}.$$

On $SG_{\ell}^{n},$ a $0$-form is a real-valued function defined on the vertex set $E_{\ell,0}^{n,m},$ a $1$-form is a measure on the edge set, and a $2$-form is a measure on the set of triangles. The de Rham derivative $d_{k}^{m}$ is a map from the space of $k$-forms to the space of $(k+1)$-forms, and $\delta_{k}^{m}$ is a map from the space of $k$-forms to the space of $(k-1)$-forms, as defined in Section \ref{S:2} Using these two operators, one can define the Laplacian
$$-\Delta_{k}^{m}=\delta_{k+1}^{m}d_{k}^{m}+d_{k-1}^{m}\delta_{k}^{m}$$
on $k$-forms. A $k$-form $f_{k}^{m}$ satisfying the conditions $-\Delta_{k}^{m}f_{k}^{m}=0$ is called a harmonic $k$-form. Let $\mathcal{H}_{\ell,k}^{n,m}$ denote the space of all harmonic $k$-forms on the graph $G_{\ell}^{n,m}.$ Cipriani \textit{et al.} \cite{Cipriani-Guido-Isola-Sauvageot_2013} introduced the concept of $1$-forms and their integrals on the paths over the Sierpinski gasket. They proved the Hodge decomposition theorem for $1$-forms and de Rham's first theorem on the space of harmonic $1$-forms. These results will be used in our study of harmonic $1$-forms.

By using the values of harmonic $1$-forms on the edges and the harmonic extension algorithm on $SG_{\ell}^{n},$ we obtain the following theorem:

\begin{thm}\label{thm:1.1}
For any $h_{1}^{m}\in\mathcal{H}_{\ell,1}^{n,m},$ let $h_{1}^{m+1}$ be a $1$-form obtained by extending $h_{1}^{m}$ onto the graph $G_{\ell}^{n,m+1}.$ Then $h_{1}^{m+1}$ is a harmonic $1$-form on $G_{\ell}^{n,m+1}$ and extends $h_{1}^{m}$ in the sense that
\begin{equation}
h_{1}^{m}(e_{1}^{m})=\sum_{e_{1}^{m+1}\subset e_{1}^{m}}h_{1}^{m+1}(e_{1}^{m+1}).
\end{equation}
\end{thm}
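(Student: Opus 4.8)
The plan is to mimic the strategy used by Aaron \textit{et al.} for the ordinary and $3$-dimensional Sierpi\'{n}ski gaskets, but to isolate the combinatorial features of $SG_{\ell}^{n}$ that actually drive the argument so that the proof goes through uniformly in $n$ and $\ell$. First I would set up notation for the extension map: given $h_1^m\in\mathcal{H}_{\ell,1}^{n,m}$, its extension $h_1^{m+1}$ to $G_{\ell}^{n,m+1}$ is the $1$-form that (i) reproduces $h_1^m$ in the averaged sense of the displayed identity, i.e.\ $h_1^m(e_1^m)=\sum_{e_1^{m+1}\subset e_1^m}h_1^{m+1}(e_1^{m+1})$ on every ``old'' edge, and (ii) is determined on the ``new'' interior edges by requiring $-\Delta_1^{m+1}h_1^{m+1}=0$ there. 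The content of the theorem is that conditions (i)–(ii), together with the already-known harmonicity of $h_1^m$ at generation $m$, force harmonicity at every vertex and every triangle of $G_{\ell}^{n,m+1}$. I would first verify that the extension problem is well posed, i.e.\ that the local linear system determining the values on new edges inside each cell $F_\omega(SG_\ell^n)$, $|\omega|=m$, has a unique solution; this reduces to invertibility of the generation-$1$ Hodge Laplacian restricted to interior edges, which is the same finite linear-algebra fact the authors will have recorded when constructing harmonic extensions of $0$-forms.

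Next I would split the verification of $-\Delta_1^{m+1}h_1^{m+1}=0$ into its two pieces, using $-\Delta_1^{m+1}=\delta_2^{m+1}d_1^{m+1}+d_0^{m+1}\delta_1^{m+1}$. The $\delta_1$ part is a statement about vertices: at each vertex of $G_{\ell}^{n,m+1}$ the signed sum of $h_1^{m+1}$ over incident edges (the ``divergence'') should equal $\delta_1 h_1^{m+1}$, and I must show the composition $d_0^{m+1}\delta_1^{m+1}h_1^{m+1}$ vanishes. Here there are two kinds of vertices: old vertices $F_\omega q_j$ coming from generation $m$, and genuinely new vertices created in passing to generation $m+1$. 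For new vertices the vanishing is built into the defining equations of the extension (that is exactly what ``harmonic on the new edges'' encodes once one unwinds $\Delta_1$), so the real work is at old vertices, where one wants the divergence of $h_1^{m+1}$ to match the divergence of $h_1^m$; this is where the reproducing identity (i) is used, combined with the fact that the ``star'' of an old vertex in $G_{\ell}^{n,m+1}$ decomposes into the images under the relevant $F_\omega$ of stars in $G_\ell^{n,1}$. The $d_1$ part is a statement about triangles: $\delta_2^{m+1}d_1^{m+1}h_1^{m+1}=0$ asks that the ``curl'' assignment $T\mapsto (d_1 h_1^{m+1})(T)$ — the oriented sum of $h_1^{m+1}$ around each small triangle $T$ of $G_{\ell}^{n,m+1}$ — be annihilated by $\delta_2^{m+1}$; one shows the curl values are themselves an exact/harmonic configuration by expressing each small triangle's curl in terms of the curl of the enclosing generation-$m$ triangle and again invoking (i).

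The step I expect to be the genuine obstacle is the combinatorial bookkeeping at the old vertices and around the old triangles when $\ell\ge 3$ and $n\ge 3$. Unlike the classical $SG$ (where each nonboundary junction point is shared by exactly two cells and each triangle is ``primitive''), in $SG_\ell^n$ an old vertex can lie in the boundary of several subcells, the cells meet along lower-dimensional faces in more intricate patterns, and there are ``interior'' triangles of the initial graph $G_\ell^{n,0}$ that are not images of a single small copy of $SG_\ell^n$. I would therefore want a clean lemma, proved once at generation $0\to 1$, saying: for every vertex $v$ and every triangle $T$ of $G_\ell^{n,1}$, the divergence (resp.\ curl) operator applied to the extension of a harmonic $1$-form agrees with that of the original form. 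With such a lemma in hand, the general case follows by applying it inside each word-cell $F_\omega$, $|\omega|=m$, and summing, because both $\Delta_1^{m+1}$ and the reproducing identity are local and self-similar; the passage from the base case to arbitrary $m$ is then a routine induction using $F_\omega \circ F_i = F_{\omega i}$ and the compatibility of $d_k,\delta_k$ with the cell structure. A secondary, more technical point to get right is orientation: one must fix consistent orientations on edges and triangles across generations so that the signs in $d_0,d_1,\delta_1,\delta_2$ and in the reproducing identity all align; I would dispatch this at the outset so it does not contaminate the main computation.
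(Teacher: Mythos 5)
There is a genuine gap, and it sits exactly where you have placed the definitions rather than the proofs. You define the extension implicitly: reproduce $h_1^m$ in the summed sense on subdivided edges, and impose $-\Delta_1^{m+1}h_1^{m+1}=0$ on the new edges/vertices, so that (as you say) ``for new vertices the vanishing is built into the defining equations.'' But then the entire content of the theorem migrates into your well-posedness claim, and that claim is not the ``same finite linear-algebra fact'' as for $0$-forms. For $0$-forms the extension is pinned down by an irreducible diagonally dominant system $AX=B$; for $1$-forms the relevant operator is $\delta_2^{m+1}d_1^{m+1}+d_0^{m+1}\delta_1^{m+1}$, the space $\mathcal{H}_{\ell,1}^{n,m+1}$ is strictly larger than $\mathcal{H}_{\ell,1}^{n,m}$ (its dimension grows like $M\frac{(N_\ell^n)^{m+1}-1}{N_\ell^n-1}$), and a count of your constraints (one summed reproducing condition per generation-$m$ edge, plus $\Delta_1$-conditions at new edges) against unknowns (all generation-$(m+1)$ edge values) does not give an obviously square, let alone uniquely solvable, system. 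Without an explicit construction you have neither existence nor uniqueness of the object you are reasoning about, and the subsequent ``only old vertices and triangles remain'' step has nothing to stand on. Note also that harmonicity of a $1$-form is the conjunction $d_1h=0$ and $\delta_1h=0$ (this equivalence with $\Delta_1h=0$ is a global energy statement, not an edge-by-edge one), so imposing $\Delta_1h^{m+1}=0$ only on a subset of edges does not localize the way your plan assumes.

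The paper's route is the reverse of yours: it first makes the extension completely explicit, writing $h_1^m=d_0^mf_0^m$ for a harmonic mapping $f_0^m$ (Strichartz), harmonically extending the potential via the $AX=B$ algorithm (equations (3.4)--(3.9)), and deriving closed-form edge values (3.10)--(3.27) organized by the vertex classification $V_{\ell,\alpha}^{n,m+1}$, $C_{\ell,\alpha,i}^{n,m+1}$, $B_{\ell,\alpha,i}^{n,m+1}$. Then harmonicity at the \emph{new} vertices is a substantive verification, not a definition: Propositions \ref{prop:5}--\ref{prop:11} prove $\delta_1^{m+1}h_1^{m+1}=0$ there using identities among the extension coefficients (e.g.\ that the coefficient rows coincide with rows of $-AC$, equal to $-C_1$, and normalizations such as $\sum_{j}\bar b_{j,\alpha}+(n+1-\alpha)\bar b_\alpha=1$), while the old-vertex case (Proposition \ref{prop:4}) and the triangle case (Propositions \ref{prop:1}--\ref{prop:3}) reduce to $\delta_1^mh_1^m=0$ and $d_1^mh_1^m=0$; the reproducing identity then falls out by telescoping, as a conclusion rather than a hypothesis. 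Your closing idea of a single generation-$0\to1$ lemma propagated by self-similarity is close in spirit to how the paper localizes the computation to one cell, but the lemma as you state it (``divergence/curl of the extension agrees with that of the original form'') is not the right statement at new vertices, where there is no generation-$m$ quantity to compare with and one must prove outright vanishing --- and that is precisely the case analysis you defer as ``bookkeeping.'' As it stands, the proposal is a plausible plan whose two essential ingredients (a well-defined construction of $h_1^{m+1}$ and the new-vertex/new-triangle cancellations) are both missing.
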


Compared to Sierpinski gasket, the structure of $SG_{\ell}^{n}$ is more complex. The challenges in proving Theorem \ref{thm:1.1} arise from the structural complexity, the explicit construction of harmonic extensions, and the complexity of symbols and calculations. To address these problems, we introduce a refined classification of vertices into six distinct sets that capture the hierarchical characteristics of $SG_{\ell}^{n}.$ Furthermore, we derive a harmonic extension matrix by solving a linear system of equations. This allows us to obtain an explicit expression for a harmonic function at any point in $E_{\ell,0}^{n,1}.$ By combining these with the properties of harmonic $1$-forms, we establish exact formulas for edge values. To give a unified framework across different dimensions and levels, we break down the process into several propositions.

We define a harmonic $1$-form $h$ on the graph $G_{\ell}^{n,1},$ by using the harmonic extension algorithm and the IFS maps on $SG_{\ell}^{n}.$ Define a new harmonic $1$-form on each cell by
\begin{equation*}
h_{\omega}(e):=\begin{cases}
h\circ F_{\omega}^{-1}(e),\quad &\text{$e\in F_{\omega}E_{\ell,1}^{n,1}$};\\
0,\quad &\text{elsewhere}.
\end{cases}
\end{equation*}

The following is our second main theorem.
\begin{thm}\label{thm:1.2}
On $SG_{\ell}^{n},$ for any $h_{\omega},h_{\omega^{\prime}}\in\mathcal{H}_{\ell,1}^{n,m},$ if $\omega\neq\omega^{\prime},$ then
$$\langle h_{\omega},h_{\omega^{\prime}}\rangle_{1}^{m}=\sum_{e_{1}^{m}\in E_{1,1}^{n,m}}\mu_{1}(e_{1}^{m})h_{\omega}(e_{1}^{m})\overline{h_{\omega^{\prime}}(e_{1}^{m})}=0.$$
Moreover, $\{h_{\omega}\}$ is an orthogonal basis for $\mathcal{H}_{\ell,1}^{n,m}.$
\end{thm}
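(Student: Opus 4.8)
The plan is to first establish the orthogonality relation $\langle h_\omega, h_{\omega'}\rangle_1^m = 0$ for $\omega \neq \omega'$ by a support argument, then deduce that $\{h_\omega\}$ forms an orthogonal basis by a dimension count. Since $h_\omega$ is defined to be the pushforward $h \circ F_\omega^{-1}$ of a fixed harmonic $1$-form $h$ on the edges of $F_\omega E_{\ell,1}^{n,1}$ and zero elsewhere, the key observation is that $h_\omega$ is supported on edges lying inside the cell $F_\omega(SG_\ell^n)$. The first step is to split the sum defining $\langle h_\omega, h_{\omega'}\rangle_1^m$ over the edges $e_1^m$ and note that a summand is nonzero only when $e_1^m$ is simultaneously in (the edge set of) the cell indexed by $\omega$ and the cell indexed by $\omega'$; when $\omega \neq \omega'$ are distinct words of the same length $m$, these two cells $F_\omega(SG_\ell^n)$ and $F_{\omega'}(SG_\ell^n)$ intersect in at most a single junction point of the p.c.f.\ structure, hence share no edge of the graph $G_\ell^{n,m}$. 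Therefore every summand vanishes and $\langle h_\omega, h_{\omega'}\rangle_1^m = 0$.

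The second step is to verify that the collection $\{h_\omega : \omega \in W_m\}$ actually consists of harmonic $1$-forms, which is where Theorem~\ref{thm:1.1} and the construction preceding Theorem~\ref{thm:1.2} enter: $h$ is harmonic on $G_\ell^{n,1}$ by construction, and pushing forward by the similarity $F_\omega$ (which is an automorphism of the relevant local graph structure onto the cell) preserves the harmonicity conditions $-\Delta_1^m h_\omega = 0$ away from the cell, while the vanishing of $h_\omega$ outside the cell together with the junction structure guarantees the conditions at the boundary vertices; one should check the flux/cocycle conditions at the junction points explicitly. The third step is the dimension count: I would compute $\dim \mathcal H_{\ell,1}^{n,m}$ using the combinatorial data of $G_\ell^{n,m}$ (number of edges, vertices, triangles, and the ranks of $d_0^m$ and $d_1^m$, i.e.\ an Euler-characteristic / first-Betti-number computation for the graph, as in Cipriani \textit{et al.}) and show it equals $|W_m| = (N_\ell^n)^m$ — or more precisely equals the number of independent $h_\omega$'s after accounting for any linear relations forced by the extension formula in Theorem~\ref{thm:1.1}. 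Since the $h_\omega$ are nonzero, pairwise orthogonal, hence linearly independent, matching the count shows they span.

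The main obstacle I expect is the dimension count and, relatedly, confirming that the $h_\omega$ are genuinely all nonzero and that there are exactly $(N_\ell^n)^m$ of them with no hidden relations: the first Betti number of $G_\ell^{n,m}$ grows in a way that depends delicately on $n$ and $\ell$ (the number of independent cycles created at each subdivision), and one must reconcile this with the recursive structure whereby a harmonic $1$-form on $G_\ell^{n,m+1}$ restricts/extends compatibly with level $m$. A secondary technical point is handling the inner product normalization $\mu_1(e_1^m)$ correctly under the pushforward by $F_\omega$ — since $\mu_1$ is the natural (self-similar) measure on edges, the scaling factors must be tracked so that the orthogonality statement is genuinely about the stated inner product; but this does not affect the vanishing of cross terms, only the eventual normalization. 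Once the count $\dim \mathcal H_{\ell,1}^{n,m} = (N_\ell^n)^m$ is in hand, orthogonality plus linear independence plus matching cardinality immediately yield that $\{h_\omega\}$ is an orthogonal basis.
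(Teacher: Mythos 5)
There is a genuine gap, in two places. First, your orthogonality argument only treats the case where $\omega$ and $\omega'$ are distinct words of the \emph{same} length, so that the cells $F_{\omega}(SG_{\ell}^{n})$ and $F_{\omega'}(SG_{\ell}^{n})$ meet in at most a junction point and share no edge. But the family $\{h_{\omega}\}$ in Theorem \ref{thm:1.2} is indexed by words of varying length (the forms $h\circ F_{\omega}^{-1}$ live on cells of different generations, harmonically extended to $G_{\ell}^{n,m}$ via Theorem \ref{thm:1.1}), so the case where the $\omega$-cell is \emph{contained} in the $\omega'$-cell also occurs, and there the supports overlap and the product does not vanish termwise. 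This nested case is precisely where the paper's proof does real work: it uses the prescribed edge values of $h$ on $G_{\ell}^{n,1}$ (the values $1,2,-1,-2,0$ on boundary and interior edges) to rewrite the inner product as a combination of quantities $d_{1}^{m+1}h_{\omega'}(e_{2}^{m+1})$, which vanish because $h_{\omega'}$ is harmonic. A pure support argument cannot reach this conclusion.

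Second, your dimension count is off. You propose $\dim\mathcal{H}_{\ell,1}^{n,m}=(N_{\ell}^{n})^{m}=|W_{m}|$ with the basis indexed by words of length exactly $m$; but forms $h\circ F_{\omega}^{-1}$ with $|\omega|=m$ are supported on generation-$(m+1)$ edges and are not even $1$-forms on $G_{\ell}^{n,m}$, and in any case cell-localized forms at the finest scale alone do not span. The correct structure is hierarchical: with $M=\dim\mathcal{H}_{\ell,1}^{n,1}$ equal to the number of independent $1$-cycles of $G_{\ell}^{n,1}$ (via the Hodge decomposition, Theorem \ref{thm:2.9}), the basis is $\{h_{1,j}\circ F_{\omega}^{-1}: 1\le j\le M,\ |\omega|\le m-1\}$, and
$$\dim\mathcal{H}_{\ell,1}^{n,m}=M\bigl(1+N_{\ell}^{n}+\cdots+(N_{\ell}^{n})^{m-1}\bigr)=M\,\frac{(N_{\ell}^{n})^{m}-1}{N_{\ell}^{n}-1},$$
so your count misses both the multiplicity $M$ per cell and the contributions from all coarser scales. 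Linear independence is also not automatic from orthogonality in your scheme (you have not yet proved orthogonality across scales); the paper establishes it directly by integrating against the cycles $F_{\omega}\gamma_{j}$, where $\int_{F_{\omega}\gamma_{j}}h_{1,j}\circ F_{\omega}^{-1}=1$ and all other integrals vanish by Remark \ref{re:1} and Stokes' theorem. Your plan would need to be repaired along these lines before the spanning step can go through.
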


To prove Theorem \ref{thm:1.2}, we first construct a harmonic $1$-form $h_{\omega}$ from $h,$ verifying its harmonicity through the harmonic conditions. Next we introduce a detailed decomposition of the edge set to handle the interplay between dimension and level, and adapt $1$-cycle integration techniques to verify linear independence in this hierarchical setting. By showing that the sum of integrals vanishes only for trivial combinations, we prove that $\{h_{\omega}\}$ forms a basis.

This paper is organized as follows. Section \ref{S:2} provides the definitions of higher-dimensional level-$l$ Sierpinski gaskets, de Rham operators, dual de Rham operators and the Laplacian on $k$-forms along with some preliminary results. Section \ref{S:3} is devoted to the proofs of Theorems \ref{thm:1.1} and \ref{thm:1.2}. In Section \ref{S:4}, we explore the properties of $2$-forms on the level-$3$ Sierpinski gasket, extending our analysis to higher dimensional settings.

\maketitle

\section{ HIGHER-DIMENSIONAL LEVEL-$L$ SIERPI\'{N}SKI GASKET}\label{S:2}
\setcounter{equation}{0}
\setcounter{figure}{0}

The higher-dimensional level-$\ell$ Sierpinski gaskets $K=SG_{\ell}^{n}$ $($for $n,\ell\geq 2)$ are self-similar fractals in $\mathbb{R}^{n}$ constructed by $N_{\ell}^{n}$ contractive similitudes $\{F_{i}\}_{i=1}^{N_{\ell}^{n}},$ where $F_{i}(x)=x/\ell+b_{i,l,n}.$ Here, $b_{i,\ell,n}$ are translation vectors corresponding to the retained subgraph at each step, and $N_{\ell}^{n}$ is the number of the IFS maps on $SG_{\ell}^{n}$ $($\textit{e.g.} $N_{\ell}^{2}=\frac{\ell(\ell+1)}{2}$ for the level-$\ell$ Sierpinski gasket in $\mathbb{R}^{2},$ $N_{3}^{3}=10$ for the $3$-dimensional level-$3$ Sierpinski gasket$).$ $SG_{\ell}^{n}$ is the unique nonempty compact set that satisfies the self-similar identity
$$SG_{\ell}^{n}=\bigcup_{i=1}^{N_{\ell}^{n}}F_{i}(SG_{\ell}^{n}).$$
For fixed level $\ell,$ the structure of $SG_{\ell}^{n}$ is constructed by first considering $SG_{\ell}^{n-1}$ embedded in $\mathbb{R}^{n-1}$ and then adding a point in $\mathbb{R}^{n}$ orthogonal to each of its $n$-simplices, and finally connecting this point to all vertices of the corresponding in $(n-1)$-simplices. The value of $N_{\ell}^{n}$ is given by the recurrence relation:
\begin{equation}\label{eq:N1}
\begin{split}
N_{\ell}^{n}&=N_{\ell}^{n-1}+N_{\ell-1}^{n}\\
&=N_{\ell}^{2}+N_{\ell-1}^{3}+N_{\ell-1}^{4}+\cdots+N_{\ell-1}^{n},
\end{split}
\end{equation}
reflecting the combinatorial interplay between dimension and level. Figures \ref{fig:sg2} and \ref{fig:sg3} illustrate $SG_{\ell}^{n}$ for the cases $n=2,$ $3,$  $\ell=2,$ $3,$ $4,$ showing the hierarchical geometric structure.

\begin{figure}[htbp]
  \centering
  \begin{tikzpicture}[scale=2]
    \draw (0,0) --(1,0) --(0.5,0.86) --(0,0);
    \draw [black, fill=black] (0,0) --(0.5,0) --(0.25,0.43) --(0,0);
    \draw [black, fill=black](0.5,0) --(1,0) --(0.75,0.43) --(0.5,0);
    \draw [black, fill=black] (0.25,0.43) --(0.75,0.43)--(0.5,0.86);
  \end{tikzpicture}\qquad\qquad
  \begin{tikzpicture}[scale=2]
    \draw (0,0) --(1,0)--(0.5,0.86) --(0,0);
    \draw (1/3,0) --(1/6,0.28) --(0.5,0.28) --(1/3,0);
    \draw (0.5,0.28) --(5/6,0.28) --(2/3,0) --(0.5,0.28);
    \draw (1/3,0.56) --(2/3,0.56) --(0.5,0.28) --(1/3,0.56);
    \draw [black, fill=black] (0,0) --(1/3,0) --(1/6,0.28);
    \draw [black, fill=black] (1/3,0) --(2/3,0) --(0.5,0.28);
    \draw [black, fill=black] (2/3,0) --(1,0) --(5/6,0.28);
    \draw [black, fill=black] (1/6,0.28) --(0.5,0.28) --(1/3,0.56);
    \draw [black, fill=black] (0.5,0.28) --(5/6,0.28) --(2/3,0.56);
    \draw [black, fill=black] (0.5,0.86) --(2/3,0.56) --(1/3,0.56);
  \end{tikzpicture}\qquad\qquad
  \begin{tikzpicture}[scale=2]
    \draw (0,0) --(1,0)--(0.5,0.86) --(0,0);
    \draw (1/4,0) --(1/8,0.22) --(3/8,0.22) --(5/8,0.22) --(7/8,0.22) --(3/4,0) --(5/8,0.22) --(1/2,0) --(3/8,0.22) --(1/4,0);
    \draw (3/8,0.22) --(1/4,0.43) --(0.5,0.43) --(3/4,0.43) --(5/8,0.22) --(0.5,0.43) --(3/8,0.22);
    \draw (1/2,0.43) --(3/8,0.65) --(5/8,0.65) --(0.5,0.43);
    \draw [black, fill=black] (0,0) --(1/4,0) --(1/8,0.22);
    \draw [black, fill=black] (1/4,0) --(1/2,0) --(3/8,0.22);
    \draw [black, fill=black] (1/2,0) --(3/4,0) --(5/8,0.22);
    \draw [black, fill=black] (3/4,0) --(1,0) --(7/8,0.22);
    \draw [black, fill=black] (1/8,0.22) --(3/8,0.22) --(1/4,0.43);
    \draw [black, fill=black] (3/8,0.22) --(5/8,0.22) --(5/8,0.22);
    \draw [black, fill=black] (3/8,0.22) --(5/8,0.22) --(0.5,0.43);
    \draw [black, fill=black] (5/8,0.22) --(7/8,0.22) --(3/4,0.43);
    \draw [black, fill=black] (1/4,0.43) --(0.5,0.43) --(3/8,0.65);
    \draw [black, fill=black] (0.5,0.43) --(3/4,0.43) --(5/8,0.65);
    \draw [black, fill=black] (3/8,0.65) --(5/8,0.65) --(0.5,0.86);
  \end{tikzpicture}\qquad\qquad
  \caption
{First iteration of the IFS defining $SG_{\ell}^{2}$ for $\ell=2,3,4.$}
\label{fig:sg2}

\end{figure}
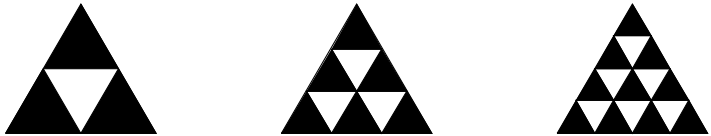

\begin{figure}[htbp]
  \centering
  \begin{subfigure}[b]{0.26\textwidth}
    \includegraphics[width=\linewidth]{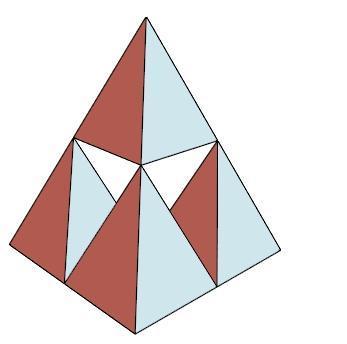}
    \label{fig:sub1}
  \end{subfigure}
  \quad
  \begin{subfigure}[b]{0.25\textwidth}
    \includegraphics[width=\linewidth]{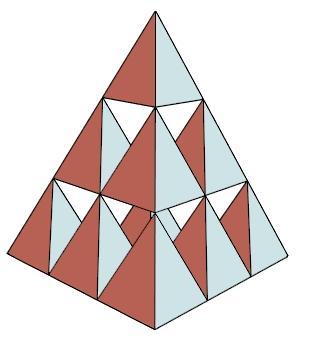}
    \label{fig:sub2}
  \end{subfigure}
  \quad
  \begin{subfigure}[b]{0.25\textwidth}
    \includegraphics[width=\linewidth]{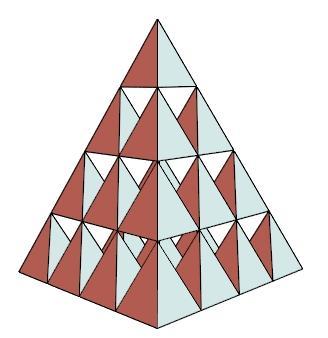}
    \label{fig:sub3}
  \end{subfigure}
\caption{First iteration of the IFS defining $SG_{\ell}^{3}$ for $\ell=2,3,4.$}
\label{fig:sg3}
\end{figure}
By equation (\ref{eq:N1}), we have the following proposition. To prove this, we will use the following standard identities (see \cite{Graham-Knuth-Patashnik_1994}, \cite{Stanley_1997})
\begin{equation}\label{eq:C1}
\sum_{i=0}^{\ell}\binom{k+i}{k}=\binom{k+\ell+1}{k+1},\text{ } \binom{n+1}{k+1}=\binom{n}{k}+\binom{n}{k+1},\text{ } \binom{n}{k}=\frac{n}{k}\binom{n-1}{k-1}.
\end{equation}
\begin{prop}\label{pro:N1}
For any $n,\ell\geq 2,$ we have
\begin{equation}\label{N1}
N_{\ell}^{n}=\binom{n+\ell-1}{n}.
\end{equation}
\end{prop}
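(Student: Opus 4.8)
The plan is to prove the closed form $N_\ell^n = \binom{n+\ell-1}{n}$ by induction, using the recurrence \eqref{eq:N1} together with the binomial identities in \eqref{eq:C1}. First I would fix the base of the induction: for $n=2$ the paper has already recorded $N_\ell^2 = \ell(\ell+1)/2 = \binom{\ell+1}{2} = \binom{2+\ell-1}{2}$, so the formula holds for all $\ell\ge 2$ when $n=2$; similarly one checks the $\ell=2$ slice directly, say from the geometric description (adding one orthogonal apex to $SG_2^{n-1}$), giving $N_2^n = n+1 = \binom{n+1}{n}$. These two slices anchor the double induction.

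For the inductive step I would use the first line of \eqref{eq:N1}, namely $N_\ell^n = N_\ell^{n-1} + N_{\ell-1}^n$, assuming the closed form is known for the pair $(n-1,\ell)$ and the pair $(n,\ell-1)$. Substituting the inductive hypotheses gives
\begin{equation*}
N_\ell^n = \binom{n+\ell-2}{n-1} + \binom{n+\ell-2}{n},
\end{equation*}
and then the Pascal identity $\binom{m+1}{k+1} = \binom{m}{k} + \binom{m}{k+1}$ from \eqref{eq:C1} (with $m = n+\ell-2$ and $k = n-1$) collapses the right-hand side to $\binom{n+\ell-1}{n}$, which is exactly the claim. One has to be a little careful about which variable the induction runs on so that both $(n-1,\ell)$ and $(n,\ell-1)$ are genuinely "earlier" cases; ordering the pairs by $n+\ell$ (and using the two slices $n=2$ and $\ell=2$ as the base) makes this clean, since both predecessors have strictly smaller value of $n+\ell$.

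As a variant — or as a sanity check — I would also verify the formula against the second, expanded form of the recurrence, $N_\ell^n = N_\ell^2 + N_{\ell-1}^3 + N_{\ell-1}^4 + \cdots + N_{\ell-1}^n$; feeding in the closed forms turns this into $\binom{\ell+1}{2} + \sum_{j=3}^{n}\binom{j+\ell-2}{\ell-1}$, and the hockey-stick identity $\sum_{i=0}^{\ell}\binom{k+i}{k} = \binom{k+\ell+1}{k+1}$ from \eqref{eq:C1} (suitably reindexed, with $k=\ell-1$) telescopes the sum to $\binom{n+\ell-1}{n}$; this is consistent with the induction and reassures us the recurrence \eqref{eq:N1} is being applied correctly. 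I do not anticipate a serious obstacle here: the only real care needed is bookkeeping — getting the index substitutions in the two identities of \eqref{eq:C1} to line up with $m = n+\ell-2$, $k=n-1$ in the Pascal step and $k = \ell-1$ in the hockey-stick step, and making sure the induction is set up on $n+\ell$ so that the recurrence legitimately reduces to established cases. The geometric justification of the recurrence itself (the "add an apex orthogonal to each $(n-1)$-simplex" construction) is taken as given from the discussion preceding the proposition, so the proof is essentially a two-line binomial manipulation once the induction scheme is in place.
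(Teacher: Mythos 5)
Your primary argument is correct, and it takes a genuinely different route from the paper's. The paper anchors at $N_\ell^2=\binom{\ell+1}{2}$ and $N_2^n=n+1$ and then inducts on the dimension $n$ alone: in the step to $n=k+1$ it unrolls the recurrence in the level variable, $N_\ell^{k+1}=N_2^{k+1}+N_3^{k}+N_4^{k}+\cdots+N_\ell^{k}$, and collapses the sum with the hockey-stick identity $\sum_{i=0}^{\ell-1}\binom{k+i}{k}=\binom{k+\ell}{k+1}$. You instead order the pairs $(n,\ell)$ by $n+\ell$, apply the single-step recurrence $N_\ell^n=N_\ell^{n-1}+N_{\ell-1}^n$ once, and finish with Pascal's rule $\binom{n+\ell-2}{n-1}+\binom{n+\ell-2}{n}=\binom{n+\ell-1}{n}$; this gives a leaner induction step (no inner summation), at the cost of the double-induction bookkeeping, which you handle correctly by using the two slices $n=2$ and $\ell=2$ as base cases so that both predecessors $(n-1,\ell)$ and $(n,\ell-1)$ are legitimately earlier. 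Your ``variant'' is essentially the paper's route, but as written it has an index slip: $N_{\ell-1}^j=\binom{j+\ell-2}{j}=\binom{j+\ell-2}{\ell-2}$, not $\binom{j+\ell-2}{\ell-1}$, and the hockey stick applied to $\sum_{j=3}^{n}\binom{j+\ell-2}{\ell-2}$ yields $\binom{n+\ell-1}{n}-\binom{\ell+1}{2}$, so the leading term $\binom{\ell+1}{2}=N_\ell^2$ must be absorbed as the missing initial terms of the full sum rather than the sum alone telescoping to $\binom{n+\ell-1}{n}$. This does not affect your main Pascal-identity proof, which is complete as it stands.
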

\begin{proof}
As we all know, $N_{\ell}^{2}=\frac{\ell(\ell+1)}{2}=\binom{\ell+1}{2}$ and $N_{2}^{n}=n+1=\binom{n+1}{n}.$ When $n=3,$ by equations (\ref{eq:N1}) and (\ref{eq:C1}), we have
\begin{equation*}
\begin{split}
N_{\ell}^{3}&=N_{2}^{3}+N_{3}^{2}+N_{4}^{2}+\cdots+N_{\ell}^{2}\\
&=\sum_{j=0}^{\ell-1}\binom{2+j}{2}=\binom{2+\ell}{3}
\end{split}
\end{equation*}
Assume that for any $n\leq k,$ we have
$$N_{\ell}^{n}=\binom{n+\ell-1}{n}$$
for all $\ell\geq 2.$

When $n=k+1,$ we have
\begin{equation*}
\begin{split}
N_{\ell}^{k+1}&=N^{k+1}_{2}+N_{3}^{k}+N_{4}^{k}+\cdots+N_{\ell}^{k}\\
&=\sum_{i=0}^{\ell-1}\binom{k+i}{k}=\binom{k+\ell}{k+1}.
\end{split}
\end{equation*}
\end{proof}

When $l=0,$ we denote $G_{0}^{n,m}$ as the graph consisting of only one point in $\mathbb{R}^{n};$ when $l=1,$ we define $G_{1}^{n,m}:=G_{1}^{n,0}$ as the initial graph in $\mathbb{R}^{n}.$ For $k\geq 1,$ let $E_{\ell,k}^{n,m}$ be the collection of complete $(k+1)$-subgraphs of $G_{\ell}^{n,m}.$ For $e_{k}^{m}\in E_{\ell,k}^{n,m}$ and $e_{k+1}^{m}\in E_{\ell,k+1}^{n,m},$ $e_{k}^{m}\subset e_{k+1}^{m}$ means that the vertices of $e_{k}^{m}$ are all vertices of $e_{k+1}^{m}.$

\begin{defi}\label{def:2.1}
A \textit{parity function} ${\rm sgn}(e_{k}^{m},e_{k+1}^{m})$ is a function, defined for all $e_{k}^{m}\in E_{\ell,k}^{n,m}$ and $e_{k+1}^{m}\in E_{\ell,k+1}^{n,m}$ $(k\leq m-1),$ taking values in $\{-1,0,1\},$ and satisfying:
\begin{equation}
{\rm sgn}(e_{k}^{m},e_{k+1}^{m})\neq 0\quad\Leftrightarrow\quad e_{k}^{m}\subset e_{k+1}^{m}
\end{equation}
and
\begin{equation}\label{eq:2.1}
\sum_{e_{k}^{m}\in E_{\ell,k}^{n,m}}{\rm sgn}(e_{k-1}^{m},e_{k}^{m}){\rm sgn}(e_{k}^{m},e_{k+1}^{m})=0
\end{equation}
for all $e_{k-1}^{m}\subset e_{k+1}^{m}.$
\end{defi}

On $SG_{\ell}^{n},$ we use the right-hand rule to define the positive orientation for the edges and the triangles. Specifically, we assign $[q_{0},q_{1},q_{2}]$ as the reference positive orientation on $SG_{\ell}^{n},$  which remains consistent across all dimensions and levels of the construction.  Therefore, for any $e_{0}^{m}\in E_{\ell,0}^{n,m}$ and $e_{1}^{m}\in E_{\ell,1}^{n,m},$ if $e_{0}^{m}\subset e_{1}^{m},$ then
$${\rm sgn}(e_{0}^{m},e_{1}^{m}):=\begin{cases}
-1,\quad &\text{$e_{0}^{m}$ is the starting point of $e_{1}^{m}$};\\
1,\quad &\text{$e_{0}^{m}$ is the endpoint of $e_{1}^{m}$}.
\end{cases}$$
We define ${\rm sgn}(e_{0}^{m},e_{1}^{m})=0$ if $e_{0}^{m}$ is not an endpoint of $e_{1}^{m}.$ Assume that the edge $e_{1}^{m}=[x,y]$ is oriented from $x$ to $y.$ If $e_{1}^{m}$ lies along the boundary of the triangle $e_{2}^{m}=[x,y,z]$ and its orientation agrees with $e_{2}^{m},$ namely, the cyclic order $x\rightarrow y\rightarrow z\rightarrow x,$ then we say that the orientations of $e_{1}^{m}$ and $e_{2}^{m}$ are consistent, otherwise, their orientations are inconsistent. Similarly, for any $e_{1}^{m}\in E_{\ell,1}^{n,m}$ and $e_{2}^{m}\in E_{\ell,2}^{n,m},$ if $e_{1}^{m}\subset e_{2}^{m},$ then
$${\rm sgn}(e_{1}^{m},e_{2}^{m}):=\begin{cases}
1,\quad &\text{the orientations of $e_{1}^{m}$ and $e_{2}^{m}$ are consistent};\\
-1,\quad &\text{the orientations of $e_{1}^{m}$ and $e_{2}^{m}$ are inconsistent}.
\end{cases}$$

\begin{defi}
A set of \textit{weights} on $E_{\ell,k}^{n,m}$ is an assignment of positive values $\mu_{k}(e_{k}^{m})$ for each $e_{k}^{m}\in E_{\ell,k}^{n,m}.$
\end{defi}

\begin{defi}
A \textit{$k$-form} $f_{k}^{m}$ is a function from $E_{\ell,k}^{n,m}$ to $\mathbb C.$ We let $\mathcal{D}_{\ell,k}^{n,m}$ be the space of all $k$-forms on $G_{\ell}^{n,m}.$ Define an inner product on $\mathcal{D}_{\ell,k}^{n,m}$ as
\begin{equation}\label{eq:2.2}
\langle f_{k}^{m},g_{k}^{m}\rangle_{k}^{m}:=\sum_{e_{k}^{m}\in E_{\ell,k}^{n,m}}\mu_{k}(e_{k}^{m})f_{k}^{m}(e_{k}^{m})\overline{g_{k}^{m}(e_{k}^{m})}.
\end{equation}
\end{defi}

\begin{defi}
For a nonnegative integer $k\leq n-1,$ the \textit{de Rham derivative} $d_{k}^{m}:\mathcal{D}_{\ell,k}^{n,m}\rightarrow \mathcal{D}_{\ell,k+1}^{n,m}$ is defined as
\begin{equation}\label{eq:2.3}
d_{k}^{m}f_{k}^{m}(e_{k+1}^{m}):=\sum_{e_{k}^{m}\in E_{\ell,k}^{n,m}}{\rm sgn}(e_{k}^{m},e_{k+1}^{m})f_{k}^{m}(e_{k}^{m})
\end{equation}
and we define $d_{n}^{m}f_{n}^{m}:=0.$

For an integer $k\geq 1,$ the \textit{dual de Rham derivative} $\delta_{k}^{m}:\mathcal{D}_{\ell,k}^{n,m}\rightarrow \mathcal{D}_{\ell,k-1}^{n,m}$ is defined by
\begin{equation}
\langle\delta_{k}^{m}f_{k}^{m},g_{k-1}^{m}\rangle_{k-1}^{m}:=\langle f_{k}^{m},d_{k-1}^{m}g_{k-1}^{m}\rangle_{k}^{m}
\end{equation}
and we define $\delta_{0}^{m}f_{0}^{m}:=0.$
\end{defi}

The following explicit formula for $\delta_{k}^{m}$ follows from direct computation:
\begin{equation}\label{eq:2.6}
\delta_{k}^{m}f_{k}^{m}(e_{k-1}^{m})=\sum_{e_{k}^{m}\in E_{\ell,k}^{n,m}}\frac{\mu_{k}(e_{k}^{m})}{\mu_{k-1}(e_{k-1}^{m})}{\rm sgn}(e_{k-1}^{m},e_{k}^{m})f_{k}^{m}(e_{k}^{m}).
\end{equation}

On $SG_{\ell}^{n},$ we define a de Rham complex
$$0{\longrightarrow}\mathcal{D}_{\ell,0}^{n,m}\stackrel{d_{0}^{m}}{\longrightarrow}\cdots\stackrel{d_{n-1}^{m}}{\longrightarrow}\mathcal{D}_{\ell,n}^{n,m}\stackrel{d_{n}^{m}}{\longrightarrow}0.$$
The relation
\begin{equation}\label{eq:2.7}
d_{k}^{m}d_{k-1}^{m}\equiv 0
\end{equation}
is a consequence of $($\ref{eq:2.1}$).$

To describe the dual de Rham derivative $\delta_{k}^{m}$ and the dual de Rham complex, we need to choose the weights on $E_{\ell,k}^{n,0}$ in order to compute the weights on $E_{\ell,k}^{n,m}.$ Given a set of weights $\mu_{k}^{0}$ on $E_{\ell,k}^{n,0}$ and a collection of positive numbers $\{b_{k}^{j}\}_{j=1}^{N_{\ell}^{n}},$ we can define a set of weights on $E_{\ell,k}^{n,m}$ as: for any $e_{k}^{m}=F_{\omega}e_{k}^{0}$ with the word $\omega=\omega_{1}\cdots\omega_{m}$ of length $m,$ we have the following equation
\begin{equation}\label{eq:2.8}
\mu_{k}(e_{k}^{m}):=\begin{cases}
b_{k}^{\omega}\mu_{k}^{0}(e_{k}^{0}),\quad &\text{$k\geq 1$},\\
\sum_{F_{\omega}e_{0}^{0}=e_{0}^{m}}b_{k}^{\omega}\mu_{0}^{0}(e_{0}^{0}),\quad &\text{$k=0$},
\end{cases}
\end{equation}
where $b_{k}^{\omega}=\prod_{j=1}^{m}b_{k}^{\omega_{j}}$ with each $\omega_{j}\in\{1,2,...,N_{\ell}^{n}\}.$

According to (\ref{eq:2.6}) and (\ref{eq:2.8}), we have an explicit expression for $\delta_{k}^{m}$ and the dual de Rham complex
$$0\stackrel{\delta_{0}^{m}}{\longleftarrow}\mathcal{D}_{\ell,0}^{n,m}\stackrel{\delta_{1}^{m}}{\longleftarrow}\cdots\stackrel{\delta_{n}^{m}}{\longleftarrow}\mathcal{D}_{\ell,n}^{n,m}{\longleftarrow} 0,$$
with the operators
$$\delta_{k}^{m}f_{k}^{m}(e_{k-1}^{m})=\sum_{e_{k}^{m}\in E_{\ell,k}^{n,m}}\frac{\mu_{k}(e_{k}^{m})}{\mu_{k-1}(e_{k-1}^{m})}{\rm sgn}(e_{k-1}^{m},e_{k}^{m})f_{k}^{m}(e_{k}^{m}).$$
The relation
\begin{equation}\label{eq:2.11}
\delta_{k}^{m}\delta_{k+1}^{m}\equiv 0
\end{equation}
follows from $($\ref{eq:2.1}$).$

Thus, we can define a de Rham cohomology space
$$H^{k}:={\rm ker}{(d_{k}^{m})}/{{\rm im}{(d_{k-1}^{m})}}.$$

\begin{defi}
The \textit{energy} $\mathcal{E}_{k}^{m}$ is a symmetric bilinear form on $\mathcal{D}_{\ell,k}^{n,m}$ defined as
\begin{equation}
\mathcal{E}_{k}^{m}(f_{k}^{m},g_{k}^{m}):=\langle d_{k}^{m}f_{k}^{m},d_{k}^{m}g_{k}^{m}\rangle_{k+1}^{m}+\langle\delta_{k}^{m}f_{k}^{m},\delta_{k}^{m}g_{k}^{m}\rangle_{k-1}^{m}
\end{equation}
For the cases $k=0$ or $k=n,$ the right-hand side of the equation reduces to a single term.
\end{defi}
\begin{defi}\label{def:2.7}
For $1\leq m\leq n-1,$ \textit{Hodge Laplacian} $\Delta_{k}^{m}$ is a symmetric operator on $\mathcal{D}_{\ell,k}^{n,m},$ defined by
\begin{equation}
-\Delta_{k}^{m}:=\delta_{k+1}^{m}d_{k}^{m}+d_{k-1}^{m}\delta_{k}^{m}.
\end{equation}
In particular, when $k=0,$ $-\Delta_{0}^{m}=\delta_{1}^{m}d_{0}^{m};$ when $k=n,$ $-\Delta_{n}^{m}=d_{n-1}^{m}\delta_{n}^{m}.$
If $f_{k}^{m}\in \mathcal{D}_{\ell,k}^{n,m}$ satisfies $-\Delta_{k}^{m}f_{k}^{m}=0,$ then we call $f_{k}^{m}$ a harmonic $k$-form. We denote by $\mathcal{H}_{\ell,k}^{n,m}$ the space of all harmonic $k$-forms on $\mathcal{D}_{\ell,k}^{n,m}.$
\end{defi}
The following theorem appears as Theorem 2.2 in \cite{Aaron-Conn-Strichartz-Yu_2014}.
\begin{thm}
$-\Delta_{k}^{m}$ is the operator associated to $\mathcal{E}_{k}^{m},$ namely,
$$\langle -\Delta_{k}^{m}f_{k}^{m},g_{k}^{m}\rangle_{k}^{m}=\mathcal{E}_{k}^{m}(f_{k}^{m},g_{k}^{m})$$
and this characterizes $-\Delta_{k}^{m}.$

In particular, the following conditions are equivalent:
\begin{enumerate}[label=(\roman*),itemsep=0pt,parsep=0pt]
    \item $f_{k}^{m}$ is harmonic;
    \item $\mathcal{E}_{k}^{m}(f_{k}^{m},f_{k}^{m})=0;$
    \item
    \begin{equation*}
    \begin{cases}
        d_{k}^{m}f_{k}^{m}=0\quad\text{and}\quad\delta_{k}^{m}f_{k}^{m}=0, & \text{if } 1\leq k\leq n-1;\\
        d_{0}^{m}f_{0}^{m}=0,                                              & \text{if } k=0;\\
        \delta_{n}^{m}f_{n}^{m}=0,                                         & \text{if } k=n.
    \end{cases}
    \end{equation*}
\end{enumerate}

\end{thm}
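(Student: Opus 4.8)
The plan is to run the standard Hodge-theoretic argument, which uses only two inputs: the defining adjointness of $\delta_k^m$ with respect to $d_{k-1}^m$, and the fact that each inner product $\langle\cdot,\cdot\rangle_k^m$ is positive-definite (this holds because all weights $\mu_k(e_k^m)$ are positive). The first step is to establish the identity $\langle -\Delta_k^m f_k^m, g_k^m\rangle_k^m = \mathcal{E}_k^m(f_k^m, g_k^m)$ by expanding the left-hand side. Writing $-\Delta_k^m f_k^m = \delta_{k+1}^m d_k^m f_k^m + d_{k-1}^m \delta_k^m f_k^m$ splits it into two terms. For the first, apply the definition of $\delta_{k+1}^m$ (the adjoint relation with $k$ replaced by $k+1$) to the $(k+1)$-form $d_k^m f_k^m$, which gives $\langle \delta_{k+1}^m d_k^m f_k^m, g_k^m\rangle_k^m = \langle d_k^m f_k^m, d_k^m g_k^m\rangle_{k+1}^m$. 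For the second, use Hermitian symmetry of the inner product together with the definition of $\delta_k^m$ applied to $g_k^m$, obtaining $\langle d_{k-1}^m \delta_k^m f_k^m, g_k^m\rangle_k^m = \langle \delta_k^m f_k^m, \delta_k^m g_k^m\rangle_{k-1}^m$. Adding the two gives exactly $\mathcal{E}_k^m(f_k^m, g_k^m)$. For the boundary cases $k=0$ and $k=n$ the same computation goes through with the conventions $\delta_0^m = 0$ and $d_n^m = 0$, so one term is absent on each side.

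The second step is the uniqueness (characterization) claim: if $A$ is any linear operator on $\mathcal{D}_{\ell,k}^{n,m}$ satisfying $\langle A f_k^m, g_k^m\rangle_k^m = \mathcal{E}_k^m(f_k^m, g_k^m)$ for all $f_k^m, g_k^m$, then $\langle (A + \Delta_k^m) f_k^m, g_k^m\rangle_k^m = 0$ for every $g_k^m$, and positive-definiteness of $\langle\cdot,\cdot\rangle_k^m$ forces $(A + \Delta_k^m) f_k^m = 0$ for all $f_k^m$, i.e.\ $A = -\Delta_k^m$.

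The third step is the chain of equivalences. For (i)$\Rightarrow$(ii): if $-\Delta_k^m f_k^m = 0$, then $\mathcal{E}_k^m(f_k^m, f_k^m) = \langle -\Delta_k^m f_k^m, f_k^m\rangle_k^m = 0$. For (ii)$\Rightarrow$(iii): $\mathcal{E}_k^m(f_k^m, f_k^m) = \langle d_k^m f_k^m, d_k^m f_k^m\rangle_{k+1}^m + \langle \delta_k^m f_k^m, \delta_k^m f_k^m\rangle_{k-1}^m$ is a sum of two nonnegative quantities, so each vanishes, and positive-definiteness yields $d_k^m f_k^m = 0$ and $\delta_k^m f_k^m = 0$ (only the $d$-term survives when $k=0$, only the $\delta$-term when $k=n$). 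For (iii)$\Rightarrow$(i): $-\Delta_k^m f_k^m = \delta_{k+1}^m(d_k^m f_k^m) + d_{k-1}^m(\delta_k^m f_k^m) = 0$ immediately.

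I do not expect a genuine obstacle here: the statement is a formal consequence of the definitions of $d_k^m$, $\delta_k^m$, and $\mathcal{E}_k^m$, and indeed mirrors the corresponding result (Theorem 2.2) of \cite{Aaron-Conn-Strichartz-Yu_2014}. The only points needing care are the index and subscript bookkeeping in the two applications of the adjoint relation (including the harmless complex conjugation coming from Hermitian symmetry) and consistently invoking the boundary conventions $d_n^m = 0$, $\delta_0^m = 0$ when $k\in\{0,n\}$; the crucial positivity of the inner products is already guaranteed by the requirement that all weights be positive.
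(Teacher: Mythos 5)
Your proof is correct: the two applications of the adjoint relation (one directly, one via Hermitian symmetry), the nondegeneracy argument for the uniqueness claim, and the positivity argument giving (ii)$\Rightarrow$(iii), together with the conventions $\delta_{0}^{m}=0$ and $d_{n}^{m}=0$ for the boundary cases, are exactly the standard Hodge-theoretic argument and contain no gaps. The paper itself offers no proof of this statement --- it is quoted as Theorem 2.2 of Aaron \textit{et al.} --- so your write-up simply supplies, correctly, the details that the paper delegates to that reference.
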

By $(\ref{eq:2.3})$ and $(\ref{eq:2.6}),$ we can compute $-\Delta_{0}^{m}=\delta_{1}^{m}d_{0}^{m}$ explicitly as:
\begin{equation}
\begin{split}
\delta_{1}^{m}d_{0}^{m}f_{0}^{m}(e_{0}^{m})&=\sum_{e_{1}^{m}\in E_{\ell,1}^{n,m}}{\rm sgn}(e_{0}^{m},e_{1}^{m})d_{0}^{m}f_{0}^{m}(e_{1}^{m})\\
&=\sum_{e_{1}^{m}\in E_{\ell,1}^{n,m}}\frac{\mu_{1}(e_{1}^{m})}{\mu_{0}(e_{0}^{m})}{\rm sgn}(e_{0}^{m},e_{1}^{m})\sum_{e_{0}^{m^{\prime}}\in E_{\ell,0}^{n,m}}{\rm sgn}(e_{0}^{m^{\prime}},e_{1}^{m})f_{0}^{m}(e_{0}^{m^{\prime}})\\
&=\sum_{[e_{0}^{m},e_{0}^{m^{\prime}}]\in E_{\ell,1}^{n,m}}\frac{\mu_{1}(e_{1}^{m})}{\mu_{0}(e_{0}^{m})}\Big (f_{0}^{m}(e_{0}^{m})-f_{0}^{m}(e_{0}^{m^{\prime}})\Big ).
\end{split}
\end{equation}
The first equality follows from $(\ref{eq:2.6}),$ the second equality follows from $(\ref{eq:2.3}),$ and the last equality follows from the fact that each edge $e_{1}^{m}$ has two vertices $e_{0}^{m}$ and $e_{0}^{m^{\prime}}$ with ${\rm sgn}(e_{0}^{m},e_{1}^{m})=-{\rm sgn}(e_{0}^{m^{\prime}},e_{1}^{m})$. See Theorem 2.3 in \cite{Aaron-Conn-Strichartz-Yu_2014} for the following theorem.

\begin{thm}[Hodge Decomposition]
For each $k$ and $m,$ we have
\begin{equation}
\mathcal{D}_{\ell,k}^{n,m}=d_{k-1}^{m}\mathcal{D}_{\ell,k-1}^{n,m}\oplus\delta_{k+1}^{m}\mathcal{D}_{\ell,k+1}^{n,m}\oplus\mathcal{H}_{\ell,k}^{n,m}
\end{equation}
as an orthogonal direct sum. Thus $H^{k}$ is isomorphic to $\mathcal{H}_{\ell,k}^{n,m}.$
\label{thm:2.9}
\end{thm}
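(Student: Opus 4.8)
The statement is the standard finite-dimensional Hodge decomposition, and the plan is to prove it purely formally from facts already available in the excerpt: each $\mathcal{D}_{\ell,k}^{n,m}$ is a finite-dimensional Hermitian inner-product space (with the pairing (\ref{eq:2.2})); $\delta_{k}^{m}$ is, by its defining relation, the adjoint of $d_{k-1}^{m}$; the coboundary identities $d_{k}^{m}d_{k-1}^{m}\equiv 0$ and $\delta_{k}^{m}\delta_{k+1}^{m}\equiv 0$ hold by (\ref{eq:2.7}) and (\ref{eq:2.11}); and harmonicity is characterized by $\mathcal{H}_{\ell,k}^{n,m}=\ker d_{k}^{m}\cap\ker\delta_{k}^{m}$ for $1\le k\le n-1$ (with $\ker d_{0}^{m}$ when $k=0$ and $\ker\delta_{n}^{m}$ when $k=n$), which is the equivalence of conditions (i) and (iii) in the preceding theorem. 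Throughout I write $\langle\cdot,\cdot\rangle$ for the appropriate $\langle\cdot,\cdot\rangle_{\bullet}^{m}$ when the degree is clear.

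First I would check that the three summands are pairwise orthogonal. For $a\in\mathcal{D}_{\ell,k-1}^{n,m}$, $b\in\mathcal{D}_{\ell,k+1}^{n,m}$, adjointness and $d_{k}^{m}d_{k-1}^{m}=0$ give $\langle d_{k-1}^{m}a,\delta_{k+1}^{m}b\rangle=\langle d_{k}^{m}d_{k-1}^{m}a,b\rangle=0$. For $h\in\mathcal{H}_{\ell,k}^{n,m}$ one has $\delta_{k}^{m}h=0$ and $d_{k}^{m}h=0$, hence $\langle d_{k-1}^{m}a,h\rangle=\langle a,\delta_{k}^{m}h\rangle=0$ and $\langle \delta_{k+1}^{m}b,h\rangle=\langle b,d_{k}^{m}h\rangle=0$. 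In particular the sum $d_{k-1}^{m}\mathcal{D}_{\ell,k-1}^{n,m}+\delta_{k+1}^{m}\mathcal{D}_{\ell,k+1}^{n,m}+\mathcal{H}_{\ell,k}^{n,m}$ is automatically a direct sum. Next I would show it exhausts $\mathcal{D}_{\ell,k}^{n,m}$: by adjointness, $f$ is orthogonal to $d_{k-1}^{m}\mathcal{D}_{\ell,k-1}^{n,m}$ iff $\langle a,\delta_{k}^{m}f\rangle=0$ for all $a$, i.e.\ iff $\delta_{k}^{m}f=0$; similarly $f\perp\delta_{k+1}^{m}\mathcal{D}_{\ell,k+1}^{n,m}$ iff $d_{k}^{m}f=0$. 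Therefore the orthogonal complement of $d_{k-1}^{m}\mathcal{D}_{\ell,k-1}^{n,m}\oplus\delta_{k+1}^{m}\mathcal{D}_{\ell,k+1}^{n,m}$ inside $\mathcal{D}_{\ell,k}^{n,m}$ is exactly $\ker d_{k}^{m}\cap\ker\delta_{k}^{m}=\mathcal{H}_{\ell,k}^{n,m}$, and since any finite-dimensional inner-product space splits as a subspace plus its orthogonal complement, the claimed orthogonal direct sum follows. The degenerate cases $k=0$ (drop the $d_{k-1}^{m}$-term, use $\mathcal{H}_{\ell,0}^{n,m}=\ker d_{0}^{m}$) and $k=n$ (drop the $\delta_{k+1}^{m}$-term) go through by the same argument with one factor absent.

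For the cohomology isomorphism I would intersect the decomposition with $\ker d_{k}^{m}$. Write $f=d_{k-1}^{m}a+\delta_{k+1}^{m}b+h$ with $h$ harmonic; if $d_{k}^{m}f=0$, then applying $d_{k}^{m}$ and using $d_{k}^{m}d_{k-1}^{m}=0=d_{k}^{m}h$ forces $d_{k}^{m}\delta_{k+1}^{m}b=0$, whence $\langle\delta_{k+1}^{m}b,\delta_{k+1}^{m}b\rangle=\langle b,d_{k}^{m}\delta_{k+1}^{m}b\rangle=0$, so $\delta_{k+1}^{m}b=0$. Hence $\ker d_{k}^{m}=\mathrm{im}\,d_{k-1}^{m}\oplus\mathcal{H}_{\ell,k}^{n,m}$, and quotienting by $\mathrm{im}\,d_{k-1}^{m}$ yields $H^{k}\cong\mathcal{H}_{\ell,k}^{n,m}$.

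There is no serious obstacle here — this is formal linear algebra once the adjointness and the coboundary identities are in hand. The only points requiring care are: keeping the conjugate-linearity convention in (\ref{eq:2.2}) straight when manipulating the inner products over $\mathbb{C}$; correctly handling the boundary degrees $k=0$ and $k=n$, where one summand and the corresponding operator are absent by the conventions $d_{n}^{m}\equiv 0$ and $\delta_{0}^{m}\equiv 0$; and invoking, rather than reproving, the already-established characterization that a $k$-form is harmonic precisely when $d_{k}^{m}f_{k}^{m}=0$ and $\delta_{k}^{m}f_{k}^{m}=0$.
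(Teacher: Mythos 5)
Your argument is correct and complete: pairwise orthogonality from adjointness together with $d_{k}^{m}d_{k-1}^{m}=0$, identification of the orthogonal complement of ${\rm im}\,d_{k-1}^{m}\oplus{\rm im}\,\delta_{k+1}^{m}$ with $\ker d_{k}^{m}\cap\ker\delta_{k}^{m}=\mathcal{H}_{\ell,k}^{n,m}$, and the intersection with $\ker d_{k}^{m}$ to get $H^{k}\cong\mathcal{H}_{\ell,k}^{n,m}$, with the degenerate degrees handled by the conventions $\delta_{0}^{m}=0$ and $d_{n}^{m}=0$. The paper does not prove this statement itself but quotes it from Aaron \emph{et al.} (Theorem 2.3 there), and your proof is exactly the standard finite-dimensional Hodge-theoretic argument used in that source, so there is nothing to add beyond keeping the conjugate-linear slots of $\langle\cdot,\cdot\rangle_{k}^{m}$ straight, which does not affect any of the vanishing conclusions.
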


\begin{defi}
A \textit{$k$-chain} $C_{\ell,k}^{n,m}$ is a formal sum $C_{\ell,k}^{n,m}=\sum_{e_{k}^{m}\in E_{\ell,k}^{n,m}}a_{k}^{m}e_{k}^{m}$ with $a_{k}^{m}\in\mathbb C.$ We denote the collection of $k$-chains by $\mathcal{C}_{\ell,k}^{n,m}.$
\end{defi}
\begin{defi}\label{def:2.10}
The boundary operator $\partial_{k}:\mathcal{C}_{\ell,k}^{n,m}\rightarrow\mathcal{C}_{\ell,k-1}^{n,m}$ is defined as
\begin{equation}
\partial_{k}e_{k}^{m}:=\sum_{e_{k-1}^{m}\in E_{\ell,k}^{n,m}}{\rm sgn}(e_{k-1}^{m},e_{k}^{m})e_{k-1}^{m}
\end{equation}
and
\begin{equation}
\partial_{k}\Big (\sum_{e_{k}^{m}\in E_{\ell,k}^{n,m}}a_{k}e_{k}^{m}\Big ):=\sum_{e_{k}^{m}\in E_{\ell,k}^{n,m}}a_{k}\partial_{k}e_{k}^{m}.
\end{equation}
If $\partial_{k}C_{\ell,k}^{n,m}=0,$ then we call $C_{\ell,k}^{n,m}$ \textit{$k$-cycle}. Integration of $k$-forms along $k$-chains is defined as
\begin{equation}\label{eq:2.16}
\int_{C_{\ell,k}^{n,m}}f_{k}^{m}:=\sum_{e_{k}^{m}\in E_{\ell,k}^{n,m}}a_{k}f_{k}^{m}(e_{k}^{m}).
\end{equation}
\end{defi}
By the definition of the boundary operator, we know that $\mathcal{D}_{\ell,k}^{n,m}$ and $\mathcal{C}_{\ell,k}^{n,m}$ are dual. The following theorem holds (see \cite{Aaron-Conn-Strichartz-Yu_2014}, Theorem 2.4).
\begin{thm}[Stokes' theorem]
Let $f_{k-1}^{m}$ be a $k$-form, and $C_{\ell,k}^{n,m}$ be a $k$-chain. Then
\begin{equation}
\int_{C_{\ell,k}^{n,m}}df_{k-1}^{m}=\int_{\partial_{k}C_{\ell,k}^{n,m}}f_{k}^{m}.
\end{equation}

In particular, if $C_{\ell,k}^{n,m}$ is a $k$-cycle, then
\begin{equation}
\int_{C_{\ell,k}^{n,m}}df_{k-1}^{m}=0.
\end{equation}
\label{thm:2.10}
\end{thm}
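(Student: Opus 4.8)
The plan is to deduce the identity directly from the definitions, exploiting that the parity function ${\rm sgn}(e_{k-1}^{m},e_{k}^{m})$ is the common set of structure constants underlying both the de Rham derivative $d_{k-1}^{m}$ in $(\ref{eq:2.3})$ and the boundary operator $\partial_{k}$ in Definition \ref{def:2.10}. First I would reduce to the case of a single cell: both maps $C\mapsto\int_{C}d_{k-1}^{m}f_{k-1}^{m}$ and $C\mapsto\int_{\partial_{k}C}f_{k-1}^{m}$ are linear in the $k$-chain $C=C_{\ell,k}^{n,m}=\sum_{e_{k}^{m}}a_{k}^{m}e_{k}^{m}$ --- the former by $(\ref{eq:2.16})$, the latter by the linearity of $\partial_{k}$ followed by $(\ref{eq:2.16})$ --- so it suffices to prove $\int_{e_{k}^{m}}d_{k-1}^{m}f_{k-1}^{m}=\int_{\partial_{k}e_{k}^{m}}f_{k-1}^{m}$ for each $e_{k}^{m}\in E_{\ell,k}^{n,m}$.

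Next I would compute both sides of this cell-wise identity. For the left side, $(\ref{eq:2.16})$ gives $\int_{e_{k}^{m}}d_{k-1}^{m}f_{k-1}^{m}=d_{k-1}^{m}f_{k-1}^{m}(e_{k}^{m})$, and then $(\ref{eq:2.3})$ expands this as $\sum_{e_{k-1}^{m}\in E_{\ell,k-1}^{n,m}}{\rm sgn}(e_{k-1}^{m},e_{k}^{m})f_{k-1}^{m}(e_{k-1}^{m})$. For the right side, Definition \ref{def:2.10} gives $\partial_{k}e_{k}^{m}=\sum_{e_{k-1}^{m}\in E_{\ell,k-1}^{n,m}}{\rm sgn}(e_{k-1}^{m},e_{k}^{m})e_{k-1}^{m}$, and applying $(\ref{eq:2.16})$ to integrate $f_{k-1}^{m}$ over this chain yields exactly the same sum $\sum_{e_{k-1}^{m}\in E_{\ell,k-1}^{n,m}}{\rm sgn}(e_{k-1}^{m},e_{k}^{m})f_{k-1}^{m}(e_{k-1}^{m})$. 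Matching the two proves the cell-wise identity, hence the general formula by linearity. The ``in particular'' clause is then immediate: when $C_{\ell,k}^{n,m}$ is a $k$-cycle, $\partial_{k}C_{\ell,k}^{n,m}=0$, so the right side is the integral of $f_{k-1}^{m}$ over the zero chain, which vanishes.

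I do not expect any genuine obstacle: the statement is the adjointness of $d_{k-1}^{m}$ and $\partial_{k}$ under the pairing $(\ref{eq:2.16})$ between $\mathcal{D}_{\ell,k}^{n,m}$ and $\mathcal{C}_{\ell,k}^{n,m}$ (the duality already noted before the theorem), and its proof is a short unwinding of definitions. The only place demanding mild care is the index bookkeeping and sign conventions --- in particular confirming that the summation in Definition \ref{def:2.10} ranges over $E_{\ell,k-1}^{n,m}$ and uses the same parity function ${\rm sgn}$ as in $(\ref{eq:2.3})$, so that the two sums are literally identical term by term.
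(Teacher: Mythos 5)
Your argument is correct: the identity is exactly the adjointness of $d_{k-1}^{m}$ and $\partial_{k}$ under the pairing (\ref{eq:2.16}), and your reduction by linearity to a single cell followed by term-by-term matching of the two sums over $E_{\ell,k-1}^{n,m}$ is the standard (and essentially only) proof. Note that the paper itself gives no proof, citing Theorem 2.4 of \cite{Aaron-Conn-Strichartz-Yu_2014}, where the same definitional unwinding is used; you also implicitly, and correctly, repair the statement's typographical slips (the form being integrated over $\partial_{k}C_{\ell,k}^{n,m}$ should be $f_{k-1}^{m}$, and the summation index set in Definition \ref{def:2.10} should be $E_{\ell,k-1}^{n,m}$).
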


By Definition \ref{def:2.1}, the identity $\sum_{e_{k}^{m}\in E_{\ell,k}^{n,m}}{\rm sgn}(e_{k-1}^{m},e_{k}^{m}){\rm sgn}(e_{k}^{m},e_{k+1}^{m})=0$ implies $\partial_{k-1}\partial_{k}=0.$ Therefore, the sequence
$$\mathcal{C}_{\ell,n}^{n,m}\stackrel{\partial_{n}}{\longrightarrow}\mathcal{C}_{\ell,n-1}^{n,m}\stackrel{\partial_{n-1}}{\longrightarrow}\cdots\stackrel{\partial_{1}}
{\longrightarrow}\mathcal{C}_{\ell,0}^{n,m}\stackrel{\partial_{0}}{\longrightarrow}0$$
forms a chain complex.

\begin{defi}\label{def:1}
For any $x,$ $y\in E_{\ell,0}^{n,m},$ if $x$ is connected to $y,$ we denote this as $x\sim y$ (or more precisely, $x\stackrel{G_{\ell}^{n,m}}\sim y$); otherwise, we write $x\nsim y.$ We define $\deg(x),$ the \textit{degree} of the vertex $x,$ as
$$\deg(x):=\#\{y\in E_{\ell,0}^{n,m}:y\sim x \text{ in } G_{\ell}^{n,m}\}.$$
\end{defi}

\maketitle

\section{THEORY OF $1$-FORMS }\label{S:3}
\setcounter{equation}{0}
\setcounter{figure}{0}
We denote $V_{0}$ the collection of the boundary points on $G_{\ell}^{n,0},$ and define
$$V_{\ell,1}^{n,m}:=V_{0}.$$
According to the structure of the graph $G_{\ell}^{n,m},$ when $n\geq \ell,$ the set $E_{\ell,0}^{n,m}$ can be defined as
\begin{equation}\label{E1}
E_{\ell,0}^{n,m}:=\bigcup_{k=1}^{\ell}V_{\ell,k}^{n,m},
\end{equation}
and when $\ell>n,$ the set can be defined as
\begin{equation}\label{E2}
E_{\ell,0}^{n,m}:=\bigcup_{k=1}^{n+1}V_{\ell,k}^{n,m},
\end{equation}
where for all $i\neq j, V_{\ell,i}^{n,m}\cap V_{\ell,j}^{n,m}=\emptyset.$ Moreover, for $k\geq 2,$
\begin{equation*}
\begin{aligned}
V_{\ell,k}^{n,m}:=\{e_{0}^{m}\in E_{\ell,0}^{n,m}:\exists k \text{ distinct }&F_{\omega_{j}} \text{ with }|\omega_{j}|=m\text{ such that } \\ &e_{0}^{m}=F_{\omega_{j}}q_{j_{\ell}} \text{ for some } q_{j_{\ell}}\in V_{0}\}.
\end{aligned}
\end{equation*}
A vertex $e_{0}^{m}\in V_{\ell,k}^{n,m}$ represents the intersection of $k$ distinct $e_{n}^{m}\in E_{\ell,n}^{n,m}$ in the graph $G_{\ell}^{n,m}.$ For example, when $\ell=4,$ $n=2$ and $k=3,$ we have $V_{0}=\{q_{i}\}_{i=0}^{2},$ and the set $V_{4,3}^{2,1}$ consists of all points satisfying $e_{0}^{1}=F_{\omega_{1}}q_{0}=F_{\omega_{2}}q_{1}=F_{\omega_{3}}q_{2}.$ $V_{4,3}^{2,1}$ describes all triple intersections of distinct triangles $e_{2}^{1}\in E_{4,2}^{2,1}$ in the graph $G_{4}^{2,1}.$ Formally,
\begin{equation*}
\begin{split}
V_{4,3}^{2,1}=\{e_{0}^{1}\in E_{4,0}^{2,1}: \text{ there exist } 3 \text{ distinct }&F_{\omega_{j}} \text{ with } |\omega_{j}|=1 \text{ such that } \\ &e_{0}^{1}=F_{\omega_{1}}q_{0}=F_{\omega_{2}}q_{1}=F_{\omega_{3}}q_{2}\}.
\end{split}
\end{equation*}

Let $M_{\ell}^{n,m}$ denote the cardinality of vertex set $E_{\ell,0}^{n,m}.$ By the structure of $SG_{\ell}^{n}$ and equations (\ref{E1}) and (\ref{E2}), for any $n,\ell\geq 2,$ we obtain
\begin{equation}\label{v1}
M_{\ell}^{n,m}=\begin{cases}
N_{\ell}^{n}\times M_{\ell}^{n,m-1}-\sum_{k=2}^{n+1}\binom{n+1}{k}(k-1)M_{\ell-k}^{k-1,1}, \quad & \ell>n;\\
N_{\ell}^{n}\times M_{\ell}^{n,m-1}-\sum_{k=2}^{\ell}\binom{n+1}{k}(k-1)M_{\ell-k}^{k-1,1}, \quad & \ell\leq n;
\end{cases}
\end{equation}
where we define $M_{\ell}^{n,0}:=n+1,$ $M_{\ell}^{1,1}:=\ell+1,$ $M_{0}^{n,1}:=1$ and $M_{1}^{n,1}:=n+1.$ By equation (\ref{v1}), we know it is corresponded with the $M_{\ell}^{n,m-1}.$ Hence, when $m=1,$ we have the following proposition.
\begin{prop}\label{prop:M}
On the graph $G_{\ell}^{n,1},$ for any $n\geq 2,$ we have
\begin{equation}\label{v2}
M_{\ell}^{n,1}=M_{\ell}^{n-1,1}+M_{\ell-1}^{n,1},
\end{equation}
and
\begin{equation}\label{v3}
M_{\ell}^{n,1}=\binom{n+\ell}{n}.
\end{equation}
for all $\ell\geq 1.$
\end{prop}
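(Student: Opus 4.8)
The plan is to establish the recurrence \eqref{v2} from the geometric construction of $SG_{\ell}^{n}$ and then derive the closed form \eqref{v3} by induction, in parallel with the argument already used for Proposition \ref{pro:N1}. First I would recall the inductive description of the graph $G_{\ell}^{n,1}$: it is built from $G_{\ell}^{n-1,1}$ (embedded in a coordinate hyperplane $\mathbb{R}^{n-1}$) by adjoining, in the extra coordinate direction, a scaled copy of $G_{\ell-1}^{n,1}$ sitting "above" it and attached along a shared $(n-1)$-dimensional face. Concretely, the $N_{\ell}^{n}$ cells of $G_{\ell}^{n,1}$ split, via \eqref{eq:N1}, into the $N_{\ell}^{n-1}$ cells forming the bottom layer (a copy of the generation-$1$ graph of $SG_{\ell}^{n-1}$ thickened trivially) and the $N_{\ell-1}^{n}$ cells forming the rest (a copy of the generation-$1$ graph of $SG_{\ell-1}^{n}$). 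The vertex sets of these two pieces overlap exactly in the vertices lying on the common face, and counting that overlap precisely is what yields \eqref{v2}.

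The key steps, in order, are: (i) make the decomposition $E_{\ell,0}^{n,1}=A\cup B$ rigorous, where $A$ is the vertex set of the $SG_{\ell}^{n-1}$-part and $B$ the vertex set of the $SG_{\ell-1}^{n}$-part, so that $\#A=M_{\ell}^{n-1,1}$ and $\#B=M_{\ell-1}^{n,1}$; (ii) identify $A\cap B$ with the set of vertices on the glued $(n-1)$-face and show it has cardinality exactly $\#A+\#B-M_{\ell}^{n,1}$ by matching against the master count \eqref{v1} (using the boundary identifications encoded in the sets $V_{\ell,k}^{n,m}$), which gives \eqref{v2}; (iii) verify the base cases $M_{\ell}^{1,1}=\ell+1=\binom{1+\ell}{1}$ and $M_{2}^{n,1}=M_{2}^{n,0}\cdot(n+1)$-type count, i.e. $M_{2}^{n,1}=\binom{n+2}{n}$, together with the degenerate conventions $M_{0}^{n,1}=1$, $M_{1}^{n,1}=n+1$; (iv) run a double induction on $n$ and $\ell$: assuming $M_{\ell}^{n-1,1}=\binom{n-1+\ell}{n-1}$ and $M_{\ell-1}^{n,1}=\binom{n+\ell-1}{n}$, plug into \eqref{v2} and apply the Pascal identity $\binom{n+\ell-1}{n-1}+\binom{n+\ell-1}{n}=\binom{n+\ell}{n}$ from \eqref{eq:C1} to conclude \eqref{v3}.

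I expect step (ii) to be the main obstacle: everything else is either a direct appeal to the construction or a clean binomial manipulation of the kind already carried out for $N_{\ell}^{n}$, but pinning down the cardinality of the overlap $A\cap B$ requires care because the shared face is itself a generation-$1$ graph of a lower gasket and its vertices must not be double-counted, and the bookkeeping has to be consistent with the inclusion–exclusion terms $\sum_{k}\binom{n+1}{k}(k-1)M_{\ell-k}^{k-1,1}$ appearing in \eqref{v1}. A safe route here is to avoid re-deriving the overlap directly and instead \emph{define} the right-hand side of \eqref{v2} and check it satisfies the same generation-$1$ specialization of \eqref{v1} with the same initial data, so that \eqref{v2} follows by the uniqueness of the solution to that recurrence; then \eqref{v3} is purely formal. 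I would present both: the geometric decomposition as motivation and the recurrence-matching as the rigorous core.
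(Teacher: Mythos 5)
Your plan has a genuine gap in its ``rigorous core.'' The geometric step (i)--(ii) does not work as stated: if $A$ is the vertex set of the bottom block of cells (those lying over the $SG_{\ell}^{n-1}$ layer) and $B$ the vertex set of the remaining cells, then $\#A$ is \emph{not} $M_{\ell}^{n-1,1}$ --- each bottom cell is an $n$-simplex and contributes apex vertices that do not lie in the face --- and $A\cap B\neq\emptyset$, so $\#A+\#B$ cannot equal $M_{\ell}^{n,1}$ with those cardinalities; moreover declaring $\#(A\cap B)=\#A+\#B-M_{\ell}^{n,1}$ is circular, since that is exactly the quantity \eqref{v2} is supposed to determine. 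The decomposition that does work is by \emph{vertices}, not cells: in barycentric coordinates the vertex set of $G_{\ell}^{n,1}$ is precisely $\{(k_{0},\dots,k_{n}):k_{i}\in\mathbb{Z}_{\geq0},\ \sum_{i}k_{i}=\ell\}/\ell$, and splitting according to $k_{n}=0$ versus $k_{n}\geq1$ gives a \emph{disjoint} union of a set of cardinality $M_{\ell}^{n-1,1}$ and one of cardinality $M_{\ell-1}^{n,1}$, whence \eqref{v2} and then \eqref{v3} by Pascal from \eqref{eq:C1}. You circle around this but never state it, and as written your step (ii) would fail.

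Your fallback (``check that $M_{\ell}^{n-1,1}+M_{\ell-1}^{n,1}$ satisfies the generation-$1$ specialization of \eqref{v1} with the same initial data'') is in fact the skeleton of the paper's own proof, but you have left its entire content unperformed: the paper's verification is a double induction on $n$ and $\ell$ with separate cases according to whether $\ell\leq n$ or $\ell>n$ (because the upper limit of the sum in \eqref{v1} changes), and it consumes several pages of binomial manipulation including Vandermonde's identity and the hockey-stick identity from \eqref{eq:C1}; calling the remainder ``purely formal'' understates where the work lies, so the proposal as written is not yet a proof. That said, if you repair the geometric step as above (which also requires the short observation that the points $F_{i}q_{j}$ exhaust all barycentric lattice points at scale $1/\ell$), you obtain a proof of \eqref{v2} and \eqref{v3} that is genuinely simpler and more conceptual than the paper's recurrence-theoretic induction, and independent of \eqref{v1}; I would encourage you to carry that version out carefully rather than the recurrence-matching route.
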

\begin{proof}
By definition, we obtain
\begin{equation}\label{eq:l1}
M_{0}^{n,1}=1=\binom{n}{n},
\end{equation}
\begin{equation}\label{eq:l3}
M_{1}^{n,1}=n+1=\binom{n+1}{n},
\end{equation}
and
\begin{equation}\label{eq:l2}
M_{\ell}^{1,1}=\ell+1=\binom{\ell+1}{1}.
\end{equation}

\noindent\textit{Step $1.$} Let $n=2.$ We show that for all $\ell\geq 1,$ equations (\ref{v2}) and (\ref{v3}) hold.

When $\ell=1,$ by equations (\ref{eq:l1}) and $(\ref{eq:l3}),$ equations (\ref{v2}) and (\ref{v3}) hold since
\begin{equation}\label{eq:21}
M_{1}^{1,1}+M_{0}^{2,1}=\binom{2}{1}+\binom{2}{2}=\binom{3}{2}=M_{1}^{2,1}.
\end{equation}
When $\ell=2,$ by equation $(\ref{v1})$ and Proposition \ref{pro:N1}, we have
\begin{equation*}
M_{2}^{2,1}=N_{2}^{2}\times M_{2}^{2,0}-\binom{3}{2}M_{0}^{1,1}=3\times 3-3=6.
\end{equation*}
By equations (\ref{eq:l2})and (\ref{eq:21}), we have
\begin{equation}\label{eq:22}
M_{1}^{2,1}+M_{2}^{1,1}=\binom{3}{2}+\binom{3}{1}=\binom{4}{2}=6=M_{2}^{2,1}.
\end{equation}

For $\ell>2,$ by equation $(\ref{v1}),$ we have
\begin{align*}
M_{\ell}^{2,1}&=N_{\ell}^{2}\times M_{\ell}^{2,0}-\sum_{k=2}^{3}\binom{3}{k}(k-1)M_{\ell-k}^{k-1,1}\\
&=3N_{2}^{\ell}-3(\ell-1)-2M_{\ell-3}^{2,1}\quad (\text{by } M_{\ell}^{2,0}=3, M_{\ell}^{1,1}=l+1)\\
&=3N_{\ell}^{2}-3(\ell-1)-2\Big(N_{2}^{\ell-3}\times M_{\ell-3}^{2,0}-\binom{3}{2}M_{\ell-5}^{1,1}-2\binom{3}{3}M_{\ell-6}^{2,1}\Big)\quad (\text{by equation (\ref{v1})})\\
&=3N_{\ell}^{2}-3(\ell-1)-6N_{2}^{\ell-3}+6(\ell-4)+4M_{\ell-6}^{2,1}\\
&=3N_{\ell}^{2}-3(\ell-1)-6N_{2}^{\ell-3}+6(\ell-4)+4(3N_{\ell-6}^{2}-3(\ell-7)-2M_{\ell-9}^{2,1})\quad (\text{by (\ref{v1})})\\
&=3N_{\ell}^{2}-2N_{\ell-3}^{2}+4N_{\ell-6}^{2}-3\Big((\ell-1)-2(\ell-4)+4(\ell-7)\Big)-8M_{\ell-9}^{2,1}\\
&=3\sum_{k=0}^{\lfloor \ell/3 \rfloor-1}(-2)^{k}N_{\ell-3k}^{2}-3\sum_{k=0}^{\lfloor \ell/3 \rfloor-1}(-2)^{k}(\ell-3k-1)+(-2)^{\lfloor \ell/3 \rfloor}M_{\ell-3\lfloor l/3 \rfloor}^{2,1}\\
\end{align*}
Since $N_{\ell}^{2}=\frac{\ell(\ell+1)}{2},$ for any $a\geq 1,$ we have
\begin{equation}\label{eq:2a}
M_{\ell}^{2,1}=\begin{cases}
\frac{1}{2}(2+9a+9a^{2}), \quad & \text{if $\ell=3a;$}\\
\frac{3}{2}(2+5a+3a^{2}), \quad & \text{if $\ell=3a+1;$}\\
\frac{3}{2}(4+7a+3a^{2}), \quad & \text{if $\ell=3a+2;$}\\
\end{cases}
=\frac{(\ell+1)(\ell+2)}{2}=\binom{\ell+2}{2}.
\end{equation}
Therefore, combining equations (\ref{eq:21}), (\ref{eq:22}) and (\ref{eq:2a}), for all $\ell\geq 1,$ we have
\begin{equation}\label{eq:2l}
M_{\ell}^{2,1}=\binom{\ell+2}{2}.
\end{equation}
From equations (\ref{eq:l2}) and (\ref{eq:2l}), we obtain
$$M_{\ell}^{1,1}+M_{\ell-1}^{2,1}=\binom{\ell+1}{1}+\binom{\ell+1}{2}=\binom{\ell+2}{2}=M_{\ell}^{2,1}.$$

\noindent\textit{Step $2.$} Assume that for some $k\geq 2,$ equations (\ref{v2}) and (\ref{v3}) hold for all $n$ satisfying $2\leq n\leq k$ and all $\ell\geq 1.$ Let $n=k+1.$ We claim that for any $\ell\geq 1,$ equations (\ref{v2}) and (\ref{v3}) hold.

We use induction on $\ell.$ In fact, when $\ell=1,$ by equations (\ref{eq:l1}) and (\ref{eq:l3}), we have $M_{1}^{k+1,1}=k+2$ and $M_{0}^{k+1,1}=1.$ By assumption in Step $2,$ we obtain $M_{1}^{k,1}=\binom{k+1}{k}=k+1.$ Hence, $M_{0}^{k+1}+M_{1}^{k,1}=1+k+1=k+2=M_{1}^{k+1,1}.$

Assume that for any $p\geq 1,$ we have
$$M_{\ell}^{k+1,1}=M_{\ell}^{k,1}+M_{\ell-1}^{k+1,1}$$
for all $\ell$ satisfying $\ell\leq p.$

When $\ell=p+1,$ we consider the following cases to prove the claim.

\noindent\textit{Case $1.$} $p\leq k-1.$ By equation (\ref{eq:N1}), we have
\begin{align*}
M_{p+1}^{k,1}+M_{p}^{k+1,1}&=N_{p+1}^{k}\times M_{p+1}^{k,0}-\sum_{i=2}^{p+1}\binom{k+1}{i}\times M_{(p+1)-i}^{i-1,1}\times (i-1)\\
&\quad+N_{p}^{k+1}\times M_{p}^{k+1,0}-\sum_{i=2}^{p}\binom{k+2}{i}\times M_{p-i}^{i-1,1}\times (i-1)\\
&=(k+1)N_{p+1}^{k}-\sum_{i=2}^{p+1}\binom{k+1}{i}\times M_{(p+1)-i}^{i-1,1}\times (i-1)\\
&\quad+(k+2)N_{p}^{k+1}-\sum_{i=2}^{p}\binom{k+2}{i}\times M_{p-i}^{i-1,1}\times (i-1)\\
&=(k+2)N_{p+1}^{k+1}-N_{p+1}^{k}-\sum_{i=2}^{p+1}\binom{k+2}{i}\times M_{(p+1)-i}^{i-1,1}\times (i-1)\\
&\quad+\sum_{i=2}^{p+1}\binom{k+2}{i}\times M_{(p+1)-i}^{i-1,1}\times (i-1)-\sum_{i=2}^{p+1}\binom{k+1}{i}\times M_{(p+1)-i}^{i-1,1}\times (i-1)\\
&\quad-\sum_{i=2}^{p}\binom{k+2}{i}\times M_{p-i}^{i-1,1}\times (i-1)\\\
&=M_{p+1}^{k+1,1}-N_{p+1}^{k}\\
&\quad+\sum_{i=2}^{p}(i-1)\Bigg(\Big(\binom{k+2}{i}-\binom{k+1}{i}\Big)\times M_{(p+1)-i}^{i-1,1}-\binom{k+2}{i}\times M_{p-i}^{i-1,1}\Bigg)\\
&\quad+p\Big(\binom{k+2}{p+1}-\binom{k+1}{p+1}\Big)\times M_{(p+1)-(p+1)}^{p,1}\\
&=M_{p+1}^{k+1,1}-N_{p+1}^{k}\\
&\quad+\sum_{i=2}^{p}(i-1)\Bigg(\binom{k+1}{i-1}\times (M_{(p+1)-i}^{i-1,1}-M_{p-i}^{i-1,1})-\binom{k+1}{i}\times M_{p-i}^{i-1,1}\Bigg)\\
&\quad+p\Big(\binom{k+2}{p+1}-\binom{k+1}{p+1}\Big)\times M_{(p+1)-(p+1)}^{p,1}\quad(\text{by equation (\ref{eq:C1})})\\
&=M_{p+1}^{k+1,1}-N_{p+1}^{k}\\
&\quad+\sum_{i=2}^{p}(i-1)\Bigg(\binom{k+1}{i-1}\times M_{(p+1)-i}^{i-2,1}-\binom{k+1}{i}\times M_{p-i}^{i-1,1}\Bigg)\\
&\quad+p\Big(\binom{k+2}{p+1}-\binom{k+1}{p+1}\Big)\times M_{(p+1)-(p+1)}^{p,1}\quad(\text{by assumption in Step $2$})\\
&=M_{p+1}^{k+1,1}-N_{p+1}^{k}\\
&\quad+\sum_{i=2}^{p}(i-1)\binom{k+1}{i-1}\times M_{(p+1)-i}^{i-2,1}-\sum_{i=2}^{p}(i-1)\binom{k+1}{i}\times M_{p-i}^{i-1,1}\\
&\quad+p\Big(\binom{k+2}{p+1}-\binom{k+1}{p+1}\Big)\times M_{(p+1)-(p+1)}^{p,1}\\
&=M_{p+1}^{k+1,1}-N_{p+1}^{k}\\
&\quad+(k+1)\sum_{i=2}^{p}\binom{k}{i-2}\times M_{(p+1)-i}^{i-2,1}-(k+1)\sum_{i=2}^{p}\binom{k}{i-1}\times M_{p-i}^{i-1,1}\\
&\quad+\sum_{i=2}^{p}\binom{k+1}{i}\times M_{p-i}^{i-1,1}+p\Big(\binom{k+2}{p+1}-\binom{k+1}{p+1}\Big)\times M_{(p+1)-(p+1)}^{p,1}\\
\end{align*}
The last equality follows from equation (\ref{eq:C1}). By equations (\ref{v3}) and (\ref{eq:l1}), the above expression equals
\begin{align*}
&\quad M_{p+1}^{k+1,1}-N_{p+1}^{k}\\
&\quad+(k+1)\sum_{i=2}^{p}\binom{k}{i-2}\times\binom{p-1}{j}-(k+1)\sum_{i=2}^{p}(k+1)\binom{k}{i-1}\times\binom{p-1}{j}\\
&\quad+\sum_{i=2}^{p}\binom{k+1}{i}\times\binom{p-1}{i-1}+p\Big(\binom{k+2}{p+1}-\binom{k+1}{p+1}\Big)\\
&=M_{p+1}^{k+1,1}-N_{p+1}^{k}\\
&\quad+(k+1)\sum_{i=0}^{p-2}\binom{k}{j}\times\binom{p-1}{j}-(k+1)\sum_{i=1}^{p-1}\binom{k}{i-1}\times\binom{p-1}{j}\\
&\quad+\sum_{i=2}^{p}\binom{k+1}{i}\times\binom{p-1}{i-1}+p\Big(\binom{k+2}{p+1}-\binom{k+1}{p+1}\Big)\\
&=M_{p+1}^{k+1,1}-N_{p+1}^{k}\\
&\quad+(k+1)\Bigg(-\binom{k}{p-1}+\binom{k+p-1}{k}\Bigg)-(k+1)\Bigg(-1+\binom{k+p-1}{k}\Bigg)\\
&\quad+\Bigg(-(k+1)+\binom{k+p}{k}\Bigg)+p\binom{k+1}{p}\quad(\text{by Vandermonde's Identity})\\
&=M_{p+1}^{k+1,1}-N_{p+1}^{k}-(k+1)\binom{k}{p-1}+\binom{k+p}{p}+(k+1)\binom{k}{p-1}\quad(\text{by (\ref{eq:C1})})\\
&=M_{p+1}^{k+1,1}-\binom{k+p}{p}+\binom{k+p}{p}\quad(\text{by Proposition \ref{pro:N1}})\\
&=M_{p+1}^{k+1,1}.
\end{align*}

\noindent\textit{Case $2.$} $p=k.$ The proof is essentially identical to that of Case $1.$

\noindent\textit{Case $3.$} $p=k+1.$ The proof is essentially identical to that of Case $1.$

\noindent\textit{Case $4.$} $p>k+1.$ We use induction on $p.$

\noindent(\textrm{i}) Let $k+1<p\leq 2(k+1).$ By equation (\ref{v1}), we have
\begin{align*}
&\qquad M_{p+1}^{k,1}+M_{p}^{k+1,1}\\&=N_{p+1}^{k}\times M_{p+1}^{k,0}-\sum_{i=2}^{k+1}\binom{k+1}{i}\times M_{(p+1)-i}^{i-1,1}\times (i-1)\\
&\quad+N_{p}^{k+1}\times M_{p}^{k+1,0}-\sum_{i=2}^{k+2}\binom{k+2}{i}\times M_{p-i}^{i-1,1}\times (i-1)\\
&=(k+1)N_{p+1}^{k}-\sum_{i=2}^{k+1}\binom{k+1}{i}\times M_{(p+1)-i}^{i-1,1}\times (i-1)\\
&\quad+(k+2)N_{p}^{k+1}-\sum_{i=2}^{k+2}\binom{k+2}{i}\times M_{p-i}^{i-1,1}\times (i-1)\\
&=M_{p+1}^{k+1,1}-N_{p+1}^{k}+\sum_{i=2}^{k+1}(i-1)\Bigg(\Big(\binom{k+2}{i}-\binom{k+1}{i}\Big)\times M_{(p+1)-i}^{i-1,1}-\binom{k+2}{i}\times M_{p-i}^{i-1,1}\Bigg)\\
&\quad+(k+1)\Big(\binom{k+2}{k+2}\times M_{(p+1)-(k+2)}^{k+1,1}-\binom{k+2}{k+2}\times M_{p-(k+2)}^{k+1,1}\Big)\quad(\text{by equation (\ref{v1})})\\
&=M_{p+1}^{k+1,1}-N_{p+1}^{k}+\sum_{i=2}^{k}(i-1)\Bigg(\Big(\binom{k+2}{i}-\binom{k+1}{i}\Big)\times M_{(p+1)-i}^{i-1,1}-\binom{k+2}{i}\times M_{p-i}^{i-1,1}\Bigg)\\
&\quad+k\Bigg(\Big(\binom{k+2}{k+1}-\binom{k+1}{k+1}\Big)\times M_{(p+1)-(k+1)}^{k,1}-\binom{k+2}{k+1}\times M_{p-(k+1)}^{k,1}\Bigg)\\
&\quad+(k+1)\Big(\binom{k+2}{k+2}\times M_{(p+1)-(k+2)}^{k+1,1}-\binom{k+2}{k+2}\times M_{p-(k+2)}^{k+1,1}\Big)\\
&=M_{p+1}^{k+1,1}-N_{p+1}^{k}\\
&+\sum_{i=2}^{k}(i-1)\Bigg(\Big(\binom{k+2}{i}-\binom{k+1}{i}\Big)\times M_{(p+1)-i}^{i-1,1}-\binom{k+2}{i}\times M_{p-i}^{i-1,1}\Bigg)\\
&+k\Big(\binom{k+2}{k+1}-\binom{k+1}{k+1}\Big)\times M_{(p+1)-(k+1)}^{k,1}-\Big(k\binom{k+2}{k+1}-(k+1)\binom{k+2}{k+2}\Big)\times M_{p-(k+1)}^{k,1}\\
&+(k+1)\binom{k+2}{k+2}\times M_{(p+1)-(k+2)}^{k+1,1}-(k+1)\binom{k+2}{k+2}\times M_{p-(k+1)}^{k+1,1}.\\
\end{align*}
The last equality follows from Cases $1-3$ in Step $2.$ Since the coefficients of $M_{(p+1)-(k+2)}^{k+1,1}$ and $M_{p-(k+1)}^{k+1,1}$ are equal, we have
\begin{align*}
&\quad M_{p+1}^{k+1,1}-N_{p+1}^{k}+\sum_{i=2}^{k-1}(i-1)\Bigg(\Big(\binom{k+2}{i}-\binom{k+1}{i}\Big)\times M_{(p+1)-i}^{i-1,1}-\binom{k+2}{i}\times M_{p-i}^{i-1,1}\Bigg)\\
&\quad+(k-1)\Bigg(\Big(\binom{k+2}{k}-\binom{k+1}{k}\Big)\times M_{(p+1)-k}^{k-1,1}-\binom{k+2}{k}\times M_{p-k}^{k-1,1}\Bigg)\\
&\quad+k\Big(\binom{k+2}{k+1}-\binom{k+1}{k+1}\Big)\times M_{(p+1)-(k+1)}^{k,1}-\Big(k\binom{k+2}{k+1}-(k+1)\binom{k+2}{k+2}\Big)\times M_{p-(k+1)}^{k,1}\\
&=M_{p+1}^{k+1,1}-N_{p+1}^{k}+\sum_{i=2}^{k-1}(i-1)\Bigg(\Big(\binom{k+2}{i}-\binom{k+1}{i}\Big)\times M_{(p+1)-i}^{i-1,1}-\binom{k+2}{i}\times M_{p-i}^{i-1,1}\Bigg)\\
&\quad+(k-1)\Big(\binom{k+2}{k}-\binom{k+1}{k}\Big)\times M_{(p+1)-k}^{k-1,1}\\
&\quad-\Bigg((k-1)\binom{k+2}{k}-\Big(k\binom{k+2}{k+1}-(k+1)\binom{k+2}{k+2}\Big)\Bigg)\times M_{p-k}^{k-1,1}\\
&\quad+k\Big(\binom{k+2}{k+1}-\binom{k+1}{k+1}\Big)\times M_{(p+1)-(k+1)}^{k,1}-\Big(k\binom{k+2}{k+1}-(k+1)\binom{k+2}{k+2}\Big)\times M_{p-k}^{k,1}.\\
\end{align*}
The last equality follows from the assumption in Step $2.$ Iterating the procedure under the given hypothesis in Step $2,$ we find that the last expression equals
\begin{align*}
&\quad M_{p+1}^{k+1,1}-N_{p+1}^{k}+\sum_{i=2}^{k-2}(i-1)\Bigg(\Big(\binom{k+2}{i}-\binom{k+1}{i}\Big)\times M_{(p+1)-i}^{i-1,1}-\binom{k+2}{i}\times M_{p-i}^{i-1,1}\Bigg)\\
&\quad+(k-2)\Bigg(\Big(\binom{k+2}{k-1}-\binom{k+1}{k-1}\Big)\times M_{(p+1)-(k-1)}^{k-2,1}-\binom{k+2}{k-1}\times M_{p-(k-1)}^{k-2,1}\Bigg)\\
&\quad+(k-1)\Big(\binom{k+2}{k}-\binom{k+1}{k}\Big)\times M_{(p+1)-k}^{k-1,1}\\
&\quad-\Bigg((k-1)\binom{k+2}{k}-k\binom{k+2}{k+1}+(k+1)\binom{k+2}{k+2}\Bigg)\times M_{p-k}^{k-1,1}\\
&\quad+\Big(-k\binom{k+1}{k+1}+(k+1)\binom{k+2}{k+2}\Big)\times M_{(p+1)-(k+1)}^{k,1}\\
&=M_{p+1}^{k+1,1}-N_{p+1}^{k}+\Bigg(\binom{k+2}{2}-\binom{k+1}{2}\Bigg)\times M_{(p+1)-2}^{1,1}-\Bigg(\binom{k+2}{2}-2\binom{k+2}{3}\\
&\qquad+3\binom{k+2}{4}-4\binom{k+2}{5}+\cdots+(-1)^{k+2}(k+1)\binom{k+2}{k+2}\Bigg)\times M_{p-2}^{1,1}\\
&\quad+\Bigg(-2\binom{k+1}{3}+3\binom{k+2}{4}-4\binom{k+2}{5}+\cdots+(-1)^{k+2}(k+1)\binom{k+2}{k+2}\Bigg)\times M_{p-2}^{2,1}\\
&\quad+\Bigg(-3\binom{k+1}{4}+4\binom{k+2}{5}-5\binom{k+2}{6}+\cdots+(-1)^{k+1}(k+1)\binom{k+2}{k+2}\Bigg)\times M_{p-3}^{3,1}\\
&\quad+\cdots+\Bigg(-k\binom{k+1}{k+1}+(k+1)\binom{k+2}{k+2}\Bigg)\times M_{p-k}^{k,1}\\
&=M_{p+1}^{k+1,1}-N_{p+1}^{k}+\Bigg(\binom{k+2}{2}-\binom{k+1}{2}\Bigg)\times M_{(p+1)-2}^{1,1}-\Bigg(\binom{k+2}{2}-2\binom{k+2}{3}\\
&\qquad+3\binom{k+2}{4}-4\binom{k+2}{5}+\cdots+(-1)^{k+2}(k+1)\binom{k+2}{k+2}\Bigg)\times M_{p-2}^{1,1}\\
&\quad+\sum_{\substack{i\in\{3,...k+1\}\\\text{$i$ is odd}}}\Big[-(i-1)\binom{k+1}{i}+\sum_{j=i+1}^{k+2}(-1)^{j}(j-1)\binom{k+2}{j}\Big]\times M_{(p+1)-i}^{i-1,1}\\
&\quad+\sum_{\substack{i\in\{3,...k+1\}\\\text{$i$ is even}}}\Big[-(i-1)\binom{k+1}{i}+\sum_{j=i+1}^{k+2}(-1)^{j+1}(j-1)\binom{k+2}{j}\Big]\times M_{(p+1)-i}^{i-1,1}\\
&=M_{p+1}^{k+1,1}-N_{p+1}^{k}+(k+1)\times M_{(p+1)-2}^{1,1}-M_{p-2}^{1,1}+\sum_{i=3}^{k+1}\binom{k}{i-1}\times M_{(p+1)-i}^{i-1,1}\\
&=M_{p+1}^{k+1,1}-N_{p+1}^{k}+(k+1)\binom{p}{1}-\binom{p-1}{1}+\sum_{i=3}^{k+1}\binom{k}{i-1}\times\binom{p}{i-1}\quad(\text{by (\ref{v3})})\\
&=M_{p+1}^{k+1,1}-\binom{k+p}{k}+\sum_{i=0}^{k}\binom{k}{i}\times\binom{p}{i}\quad(\text{by Proposition \ref{pro:N1}})\\
&=M_{p+1}^{k+1,1}-\binom{k+p}{k}+\binom{k+p}{k}\quad(\text{by Vandermonde's Identity})\\
&=M_{p+1}^{k+1,1}.
\end{align*}

\noindent(\textrm{ii}) Let $a\geq 1$ and $b=a+1.$ Assume that for any $p$ satisfying $a(k+1)<p\leq b(k+1),$ equations (\ref{v2}) and (\ref{v3}) hold for $n=k+1.$

\noindent(\textrm{iii}) Let $b(k+1)<p\leq (b+1)(k+1).$ Using equation (\ref{v1}) and the same method as in (\textrm{i}), we know that (\ref{v2}) holds.
By equation (\ref{v2}), we obtain
\begin{equation*}
\begin{split}
M_{\ell}^{k+1,1}&=M_{\ell}^{k,1}+M_{\ell-1}^{k,1}+\cdots+M_{2}^{k,1}+M_{1}^{k+1,1}\\
&=\binom{k+\ell}{k}+\binom{k+\ell-1}{k}+\cdots+\binom{k+2}{k}+(k+2)\\
&=\sum_{i=0}^{\ell}\binom{k+\ell-i}{k}=\binom{k+\ell+1}{k+1}.\quad(\text{by equation (\ref{eq:C1})})
\end{split}
\end{equation*}
This completes the proof for $n=k+1.$
\end{proof}

To extend a $0$-form $f_{0}^{1}$ to a $0$-form $f_{0}^{2},$ we need to use the harmonic extension algorithm associated with $SG_{\ell}^{n}.$ For example, the rule on $SG_{2}^{2}$ is $\frac{1}{5}-\frac{2}{5},$ the rule on $SG_{3}^{2}$ is $\frac{3}{15}-\frac{4}{15}-\frac{8}{15},$ and the rule on $SG_{3}^{3}$ is $\frac{11}{64}-\frac{14}{64}-\frac{28}{64}.$ Let $h$ be a harmonic function on $SG_{\ell}^{n}.$ We denote the values of the points $\{q_{i}\}_{i=0}^{n}$ by $\{h(q_{i})\}_{i=0}^{n}.$ For any $x\in E_{\ell,0}^{n,1}\setminus V_{0},$ we have
\begin{equation}\label{eq:3.1}
h(x)=\frac{1}{\deg{(x)}}\sum_{x\sim y}h(y).
\end{equation}
If the vertex $x$ is connected to $q_{i}\in V_{0},$ then $(\ref{eq:3.1})$ can be written as
\begin{equation}\label{eq:3.2}
\deg{(x)}h(x)-\sum_{\substack{x\sim y\\y\notin V_{0}}}h(y)=\sum_{\substack{x\sim y\\y\in V_{0}}}h(q_{i}).
\end{equation}
If the vertex $x$ is not connected to any point in $V_{0},$ then $(\ref{eq:3.1})$ can be written as
\begin{equation}\label{eq:3.3}
\deg{(x)}h(x)-\sum_{\substack{x\sim y\\y\notin V_{0}}}h(y)=0.
\end{equation}
For any point $x\in E_{\ell,0}^{n,1}\setminus V_{0},$ using either equation $(\ref{eq:3.2})$ or $(\ref{eq:3.3}),$ we obtain a linear system of equations
$$AX=B,$$
where $A=(a_{ij})$ is defined as follows:
\begin{equation*}
a_{ij}:=\begin{cases}
-1, \quad & i\neq j \text{ and } p_{i}\sim p_{j};\\
0, \quad & i\neq j \text{ and } p_{i}\nsim p_{j};\\
\deg{(x)}, \quad & i=j.
\end{cases}
\end{equation*}
The vector $B=(b_{i})$ is a column vector defined as follows: If the point $p_{i}$ is connected with any point $q_{i}\in V_{0},$ then $b_{i}=\sum_{p_{i}\sim q_{i}}h(q_{i});$ otherwise $b_{i}=0.$ Since $A$ is an irreducible weakly diagonally dominant matrix (recall that an $n\times n$ matrix $A=(a_{ij})$ is called \textit{weakly diagonally dominant} if $|a_{ii}|\geq\sum_{j\neq i}|a_{ij}|$ for $i=1,...,n,$ with at least one inequality be in strict), it follows that $A$ is invertible. Therefore, the linear system $AX=B$ has a unique solution. This implies that for any $x\in E_{\ell,0}^{n,1}\setminus V_{0},$ the value $h(x)$ can be expressed as a linear combination of $\{h(q_{i})\}_{i=0}^{n}$ yielding a harmonic extension algorithm on $SG_{\ell}^{n}.$

To implement this algorithm, we first derive an explicit expression for the harmonic extension at an arbitrary point  in $E_{\ell,0}^{n,1}\setminus V_{0}$ $(\text{for }\ell>n).$ For any point $e_{0}^{1}\in V_{\ell,2}^{n,1}$connected to the point $q_{i}\in V_{0},$ there exist some $q_{j}$ and $F_{k}$ such that $e_{0}^{1}=F_{i}q_{j}=F_{k}q_{i}.$ By solving the linear system $AX=B,$ we obtain
\begin{equation}\label{eq:3.4}
h(e_{0}^{1})=a_{1}h(q_{i})+a_{2}h(q_{j})+a_{3}\sum_{\tilde{\ell}=0, \tilde{\ell}\neq i,j}^{n}h(q_{\tilde{\ell}}),
\end{equation}
where $0<a_{3}<a_{2}<a_{1}<1$ and $a_{1}+a_{2}+(n-1)a_{3}=1.$

Similarly, for each point $e_{0}^{1}\in V_{\ell,2}^{n,1}$ not connected to any point in $V_{0},$ if $e_{0}^{1}$ is not the midpoint of the edge $[q_{i},q_{j}],$ there exist some $F_{k_{1}}$ and $F_{k_{2}}$ $(k_{1}<k_{2})$ such that $e_{0}^{1}=F_{k_{1}}q_{i}=F_{k_{2}}q_{j},$ then we have
\begin{equation}\label{eq:3.5}
h(e_{0}^{1})=\bar{a}_{1}h(q_{i})+\bar{a}_{2}h(q_{j})+\bar{a}_{3}\sum_{\tilde{\ell}\neq i,j}h(q_{\tilde{\ell}}),
\end{equation}
where $0<\bar{a}_{3}<\bar{a}_{2}<\bar{a}_{1}<1$ and $\bar{a}_{1}+\bar{a}_{2}+(n-1)\bar{a}_{3}=1.$

For each point $e_{0}^{1}\in V_{\ell,2}^{n,1}$ not connected to any point in $V_{0},$ if $e_{0}^{1}$ is the midpoint of the edge $[q_{i},q_{j}],$ there exist some $F_{k_{1}}$ and $F_{k_{2}}$ $(k_{1}<k_{2})$ such that $e_{0}^{1}=F_{k_{1}}q_{i}=F_{k_{2}}q_{j},$ then we have
\begin{equation}\label{eq:3.6}
h(e_{0}^{1})=\hat{a}_{1}(h(q_{i})+h(q_{j}))+\hat{a}_{2}\sum_{\tilde{\ell}\neq i,j}h(q_{\tilde{\ell}}),
\end{equation}
where $0<\hat{a}_{2}<\hat{a}_{1}<1$ and $2\hat{a}_{1}+(n-1)\hat{a}_{2}=1.$

Next we discuss the points in $V_{\ell,\alpha}^{n,1},$ where $3\leq\alpha\leq n+1.$ Before this, we give the following proposition to discuss the set $V_{\ell,\alpha}^{n,1}.$

\begin{prop}\label{1}
On $SG_{\ell}^{n},$ for any $\alpha\geq 3,$ $V_{\ell,\alpha}^{n,m}$ is the union of $\sigma_{\ell-\alpha}^{\alpha-1,m}$ vertex sets of the graph $G_{\ell-\alpha}^{\alpha-1,1},$ where
\begin{equation*}
\sigma_{\ell-\alpha}^{\alpha-1,m}=M_{\ell-\alpha}^{\alpha-1,1}\sum_{k=0}^{m-1}(N_{\ell}^{n})^{k}
=M_{\ell-\alpha}^{\alpha-1,1}\times\frac{(N_{\ell}^{n})^{m}-1}{N_{\ell}^{n}-1}.
\end{equation*}
In particular, when $\ell=\alpha=n+1,$ $V_{\ell,\alpha}^{n,m}$ is the union of $\sigma_{0}^{\alpha-1,m}$ singletons of $G_{0}^{\alpha-1,0};$
when $\ell=\alpha+1,$ $V_{\ell,\alpha}^{n,m}$ is the union of $\sigma_{1}^{\alpha-1,m}$ vertex sets of the graph $G_{1}^{\alpha-1,0}.$
\end{prop}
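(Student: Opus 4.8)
The plan is to reduce the statement to the case $m=1$ using the self-similarity of $SG_\ell^n$, and then to settle $m=1$ by an induction on the pair $(n,\ell)$ that runs in parallel with the proof of Proposition~\ref{prop:M}. Throughout I would use the finitely ramified structure of $SG_\ell^n$: two distinct level-$m$ cells $F_\omega(SG_\ell^n)$ and $F_{\omega'}(SG_\ell^n)$ meet in at most one point, which is then a common image $F_\omega(q_a)=F_{\omega'}(q_b)$ of boundary vertices, and each boundary vertex $q_k$ lies in exactly one level-$(m-1)$ cell.

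First I would establish that for every $\alpha\ge 2$ and every $m\ge 2$,
\begin{equation*}
V_{\ell,\alpha}^{n,m}=\Big(\bigsqcup_{i=1}^{N_\ell^n}F_i\big(V_{\ell,\alpha}^{n,m-1}\big)\Big)\sqcup V_{\ell,\alpha}^{n,1},
\end{equation*}
the union being disjoint. The verification rests on three remarks. (a) If $e'\in E_{\ell,0}^{n,m-1}$ and $F_i(e')$ lies in a second cell $F_j(SG_\ell^n)$ with $j\neq i$, then by finite ramification $e'\in V_0$, so $e'$ has multiplicity $1$ in $G_\ell^{n,m-1}$; hence for $e'\notin V_0$ the vertex $F_i(e')$ lies only in cells coming from $F_i$ and keeps the multiplicity that $e'$ had in $G_\ell^{n,m-1}$, and distinct $i$ give disjoint images. (b) A gluing vertex $p\in V_{\ell,\beta}^{n,1}$ with $\beta\ge 2$ lies, inside each of its $\beta$ generation-$1$ cells $F_i(SG_\ell^n)$, at the image $F_i(q_k)$ of a boundary vertex; since $q_k$ lies in a single generation-$(m-1)$ cell, $p$ lies in exactly $\beta$ generation-$m$ cells, so its multiplicity is the same in every $G_\ell^{n,m}$. (c) No vertex $F_i(e')$ with $e'\notin V_0$ belongs to $V_{\ell,\alpha}^{n,1}$. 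Combining (a)--(c), a vertex of $G_\ell^{n,m}$ has multiplicity $\alpha\ge 2$ exactly when it is $F_i(e')$ for a unique $i$ and a multiplicity-$\alpha$ vertex $e'$ of $G_\ell^{n,m-1}$, or one of the gluing vertices of $V_{\ell,\alpha}^{n,1}$, and the two alternatives are exclusive. Iterating the displayed identity yields
\begin{equation*}
V_{\ell,\alpha}^{n,m}=\bigsqcup_{k=0}^{m-1}\ \bigsqcup_{|\omega|=k}F_\omega\big(V_{\ell,\alpha}^{n,1}\big),
\end{equation*}
a disjoint union of $\sum_{k=0}^{m-1}(N_\ell^n)^k$ scaled copies of $V_{\ell,\alpha}^{n,1}$; in particular $|V_{\ell,\alpha}^{n,m}|=|V_{\ell,\alpha}^{n,1}|\sum_{k=0}^{m-1}(N_\ell^n)^k$.

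It then remains to identify $V_{\ell,\alpha}^{n,1}$, with the adjacency it inherits from $G_\ell^{n,1}$, with the vertex set of $G_{\ell-\alpha}^{\alpha-1,1}$; together with Proposition~\ref{prop:M} this gives $|V_{\ell,\alpha}^{n,1}|=M_{\ell-\alpha}^{\alpha-1,1}$, hence $\sigma_{\ell-\alpha}^{\alpha-1,m}=M_{\ell-\alpha}^{\alpha-1,1}\sum_{k=0}^{m-1}(N_\ell^n)^k$ and, by the geometric series, the second displayed form of $\sigma_{\ell-\alpha}^{\alpha-1,m}$. I would prove this identification by induction on $n$ and on $\ell$, using the recursive construction of $SG_\ell^n$ from a base copy of $SG_\ell^{n-1}$ and an upper scaled copy of $SG_{\ell-1}^n$ (the construction behind $N_\ell^n=N_\ell^{n-1}+N_{\ell-1}^n$). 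The generation-$1$ vertices of multiplicity exactly $\alpha$ split into those coming from the $SG_\ell^{n-1}$-part, those coming from the $SG_{\ell-1}^n$-part, and the interface vertices belonging to both, whose multiplicity is the sum of the two partial multiplicities; tracking this split reproduces the recursion $M_{\ell-\alpha}^{\alpha-1,1}=M_{\ell-\alpha}^{\alpha-2,1}+M_{\ell-\alpha-1}^{\alpha-1,1}$ of Proposition~\ref{prop:M}, equation~(\ref{v2}), and compatibly for the adjacency structure. The base cases of this induction are precisely the two special situations in the statement: when $\ell=\alpha=n+1$ the only multiplicity-$\alpha$ vertex inside one copy is the apex common to all $N_\ell^n$ cells, a single point $G_0^{\alpha-1,0}$ (consistent with $M_0^{\alpha-1,1}:=1$); and when $\ell=\alpha+1$ the multiplicity-$\alpha$ vertices inside one copy are exactly the $\alpha$ vertices of the initial simplex $G_1^{\alpha-1,0}$ (consistent with $M_1^{\alpha-1,1}:=\alpha$).

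I expect the main obstacle to be this last step. Step one is routine once finite ramification is invoked, but pinning down $V_{\ell,\alpha}^{n,1}$ as the vertex set of $G_{\ell-\alpha}^{\alpha-1,1}$ requires a careful, case-heavy analysis of the recursive construction of $SG_\ell^n$: one has to say exactly which vertices of the base $SG_\ell^{n-1}$-part have their multiplicity raised by the gluing to the upper $SG_{\ell-1}^n$-part, which vertices lie on the interface, and how all of this interacts with adjacency --- the same bookkeeping that makes the proof of Proposition~\ref{prop:M} long, now refined multiplicity by multiplicity, which is why it is natural to run the two inductions in tandem. A minor additional point is that one should restrict $\alpha$ to the range in which $V_{\ell,\alpha}^{n,1}\neq\emptyset$ (equivalently $\alpha\le n+1$ when $\ell>n$ and $\alpha\le\ell$ when $\ell\le n$), outside of which $V_{\ell,\alpha}^{n,m}$ is empty.
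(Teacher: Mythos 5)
Your first step is correct, and it is essentially the paper's ``iterating the process'' argument made precise: the disjointness of $\bigl(\bigcup_{i}F_{i}(V_{\ell,\alpha}^{n,m-1})\bigr)\cup V_{\ell,\alpha}^{n,1}$ for $\alpha\ge 2$ via finite ramification, the stability of the multiplicity of a generation-$1$ junction point under further subdivision, and the resulting factor $\sum_{k=0}^{m-1}(N_{\ell}^{n})^{k}$ are exactly what the paper uses informally, and your version is the cleaner one.

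The genuine gap is in your second step, and it is not only that the heavy case analysis is deferred: the target identity itself is false in the stated generality. You propose to show that $V_{\ell,\alpha}^{n,1}$ is a \emph{single} copy of the vertex set of $G_{\ell-\alpha}^{\alpha-1,1}$, so that $|V_{\ell,\alpha}^{n,1}|=M_{\ell-\alpha}^{\alpha-1,1}$. Describe the generation-$1$ vertices in barycentric form as $a/\ell$ with $a\in\mathbb{Z}_{\ge0}^{n+1}$, $\sum_i a_i=\ell$; a cell indexed by $b$ ($\sum_i b_i=\ell-1$) is $\{y:\ y_i\ge b_i/\ell\}$, so two distinct cells meet in at most one common vertex, and the vertex $a/\ell$ lies in exactly as many cells as $a$ has nonzero coordinates. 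Hence $V_{\ell,\alpha}^{n,1}$ consists of the relative-interior lattice points of \emph{all} $(\alpha-1)$-dimensional faces of the initial simplex: it is the disjoint union of $\binom{n+1}{\alpha}$ copies of the vertex set of $G_{\ell-\alpha}^{\alpha-1,1}$, with $\binom{n+1}{\alpha}\binom{\ell-1}{\alpha-1}=\binom{n+1}{\alpha}M_{\ell-\alpha}^{\alpha-1,1}$ points, not $M_{\ell-\alpha}^{\alpha-1,1}$. For example, on $SG_{\ell}^{3}$ with $\alpha=3$ there are four such copies, one in each $2$-face of the tetrahedron. Since the identity you want to prove is wrong for $n\ge 3$ and $3\le\alpha\le n$, the induction ``in tandem with Proposition \ref{prop:M}'' cannot close as described; the extra face factor $\binom{n+1}{\alpha}$ disappears only in the two situations that are actually verified in detail in the paper, namely $n=2$ (where $\alpha=3$ and $\binom{3}{3}=1$) and the top multiplicity $\alpha=n+1$ (which is the case the paper settles through Proposition \ref{prop:M}), so you cannot lean on that bookkeeping for the intermediate multiplicities. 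To repair your argument you must either carry the factor $\binom{n+1}{\alpha}$ --- one copy of $G_{\ell-\alpha}^{\alpha-1,1}$ per $(\alpha-1)$-face --- through the $m=1$ identification and the $(n,\ell)$-induction (adjusting the count of copies accordingly), or restrict the identification to $\alpha=n+1$; and even then the decisive step, determining exactly which vertices of the lower layer have their multiplicity raised by the gluing to the upper $SG_{\ell-1}^{n}$-copy, still has to be written out rather than announced.
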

\begin{proof}
\noindent\textit{Step $1.$} Let $n=2.$ We first claim that for any $\ell\geq 3,$ $V_{\ell,\alpha}^{2,m}$ is the union of $\sigma_{\ell-\alpha}^{\alpha-1,m}$ vertex sets of the graph $G_{\ell-\alpha}^{\alpha-1,1}.$

Since $3\leq\alpha\leq n+1,$ when $n=2,$ we have $\alpha=3;$ therefore, we perform induction on $\ell.$ For $\ell=3,$ $SG_{3}^{2}$ is a level-$3$ Sierpinski gasket. By the iterating progress, $V_{3,3}^{2,1}$ is a singleton corresponding to $G_{0}^{2,1}$ (see Figure \ref{fig:1}). Since $N_{3}^{2}=6,$ iterating the process, we have
$$\sigma_{0}^{2,m}=1+6+6^{2}+\cdots+6^{m-1}=\frac{6^{m}-1}{5}.$$
Hence, $V_{3,3}^{2,m}$ is the union of $\sigma_{0}^{2,m}$ singleton sets.
\begin{figure}[b]
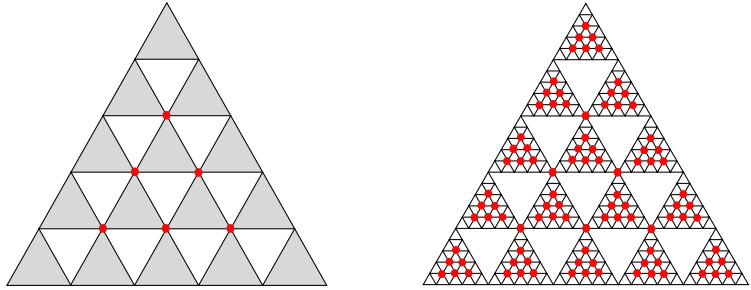

\centering


\caption{$G_{5}^{2,1}$ and $G_{5}^{2,2}$.}
\label{fig:3}
\end{figure}

Assume that for $\ell=p,$ $V_{p,3}^{2,m}$ is the union of $\sigma_{p-3}^{2,m}$ copies of $E_{p-3,0}^{2,1}.$

For $\ell=p+1,$ by the structure of $SG_{\ell}^{2},$ the vertex set $V_{p+1,3}^{2,1}$ consists of the vertices already present in $V_{p,3}^{2,1},$ together with new vertices generated from the bottom edge of $G_{\ell-1}^{2,0},$ excluding the boundary points of that edge. The coefficient $\sigma_{p-2}^{2,1}$ expands as
\begin{equation*}
\begin{split}
\sigma_{p-3}^{2,1}+(p+1-2)&=M_{p-3}^{2,1}+(p+1-2)=\binom{p-1}{2}+(p-1)\\
&=\frac{(p-1)p}{2}=\binom{p}{2}=M_{p-2}^{2,1}=\sigma_{p-2}^{2,1}.
\end{split}
\end{equation*}
Through $m$ iterations, $V_{p+1,3}^{2,m}$ becomes the union of $\sigma_{p-2}^{2,m}$ vertex sets of $G_{p-2}^{2,1},$ where
$$\sigma_{p-2}^{2,m}=M_{p-2}^{2,1}\sum_{k=1}^{m}\frac{(N_{p+1}^{2})^{m}-1}{N_{p+1}^{2}-1},$$
and $N_{p+1}^{2}=\frac{(\ell+1)(\ell+2)}{2}.$ This completes the proof of the claim.

\noindent\textit{Step $2.$} Assume that the proposition holds for $n=k$ and all $\alpha$ satisfying $3\leq\alpha\leq k+1.$ We show that the conclusion holds for $n=k+1.$ 

For $n=k+1,$ $\alpha$ ranges up to $k+2.$ The construction of $SG_{\ell}^{k+1}$ involves embedding $SG_{\ell}^{k}$ in $\mathbb{R}^{k}$ and adding points in the $(k+1)$-th direction. For $3\leq\alpha\leq k+1,$ the vertex set $V_{\ell,\alpha}^{k+1,1}$ inherits the recursive structure from $V_{\ell,\alpha}^{k,1},$ and $\sigma_{\ell-\alpha}^{\alpha-1,1}$ remains valid because it depends only on $N_{\ell}^{n}.$ Therefore, $V_{\ell,\alpha}^{k+1,1}$ is the union of $\sigma_{\ell-\alpha}^{\alpha-1,1}$ vertex sets of $G_{\ell-\alpha}^{\alpha-1,1},$ where $\sigma_{\ell-\alpha}^{\alpha-1,1}=M_{\ell-\alpha}^{\alpha-1,1}.$ Therefore, by the iterating process, $V_{\ell,\alpha}^{k+1,m}$ is the union of $\sigma_{\ell-\alpha}^{\alpha-1,m}$ vertex sets of $G_{\ell-\alpha}^{\alpha-1,1},$ where
$$\sigma_{\ell-\alpha}^{\alpha-1,m}=M_{\ell-\alpha}^{\alpha-1,1}\sum_{i=1}^{k}(N_{\ell}^{k+1})^{i-1}=M_{\ell-\alpha}^{\alpha-1,1}\times\frac{(N_{\ell}^{k+1})^{m}-1}{N_{\ell}^{k+1}-1}.$$
For $\alpha=k+2,$ the corresponding graph is $G_{\ell-k-2}^{k+1,1}.$ By Proposition \ref{prop:M}, we have
\begin{equation*}
\begin{split}
\sigma_{\ell-k-2}^{k+1,m}&=M_{\ell-k-2}^{k+1,1}+M_{\ell-k-2}^{k+1,1}\times N_{\ell}^{k+1}+\cdots+M_{\ell-k-2}^{k+1,1}\times(N_{\ell}^{k+1})^{m-1}\\
&=M_{\ell-k-2}^{k+1,1}\times\frac{(N_{\ell}^{k+1})^{m}-1}{N_{\ell}^{k+1}-1}.
\end{split}
\end{equation*}
Building on the previous result that $V_{\ell,3}^{2,m}$ is the union of $\sigma_{\ell-3}^{2,m}$ vertex sets of $G_{\ell-3}^{2,1},$ we extend this to show that when $\alpha=n+1=k+2,$ the set $V_{\ell,k+2}^{k+1,m}$ likewise consists of the union of $\sigma_{\ell-k-2}^{k+1,m}$ vertex sets of $G_{\ell-k-2}^{k+1,1}.$

\noindent\textit{Step $3.$} We prove two cases $\ell=\alpha=n+1$ and $\ell=\alpha+1.$

In the first case, from the proof in Steps $1$ and $2,$ we know that $V_{n+1,n+1}^{n,m}$ is the union of $\sigma_{0}^{n,m}$ vertex sets of $G_{0}^{n,0},$ where
$$\sigma_{0}^{n,m}=\frac{(N_{n+1}^{n})^{m}-1}{N_{n+1}^{n}-1}.$$
In the second case, from the proof in Steps $1$ and $2,$ $V_{\alpha+1,\alpha}^{n,m}$ is the union of $\sigma_{1}^{\alpha-1,m}$ vertex sets of $G_{0}^{\alpha-1,0}.$ For example, when $n=2,$ $\alpha=3,$ and $\ell=4,$ $V_{4,3}^{2,m}$ is the union of $\sigma_{1}^{2,m}$ vertex sets of $G_{0}^{2,0},$ where $\sigma_{1}^{2,m}=3\times\frac{10^{m}-1}{9}.$ By the definition of $M_{\ell}^{n,0},$ since $M_{1}^{\alpha-1,0}=\alpha,$ we have
$$\sigma_{1}^{\alpha-1,m}=M_{1}^{\alpha-1,0}\times\frac{(N_{\alpha+1}^{n})^{m}-1}{N_{\alpha+1}^{n}-1}.$$
\end{proof}

By Proposition $\ref{1},$ for $\ell=\alpha=n+1,$ $V_{n+1,n+1}^{n,m}$ is the union of $\sigma_{0}^{n,m}$ singleton sets. Assume that $\ell\gg\alpha,$ by Proposition \ref{1}, $V_{\ell,\alpha}^{n,m}$ can be defined as
\begin{equation*}
V_{\ell,\alpha}^{n,m}:=\bigcup_{\hat{k}=1}^{\sigma_{\ell-\alpha}^{\alpha-1,m}}E_{\ell-\alpha,0}^{\alpha-1,1}
=\bigcup_{\hat{k}=1}^{\sigma_{\ell-\alpha}^{\alpha-1,m}}\bigcup_{k=1}^{\alpha}V_{\ell-\alpha,k}^{\alpha-1,1}
=B_{\ell,\alpha}^{n,m}\cup C_{\ell,\alpha}^{n,m},
\end{equation*}
where
\begin{equation*}
\begin{aligned}
B_{\ell,\alpha}^{n,m}&:=\{x\in V_{\ell,\alpha}^{n,m}:\nexists y\in V_{\alpha-1,l}^{m,n} \text{ such that } x\sim y\}\\
&:=\bigcup_{\hat{k}=1}^{\sigma_{\ell-\alpha}^{\alpha-1,m}}\bigcup_{k=3}^{\alpha}V_{\ell-\alpha,k}^{\alpha-1,1},
\end{aligned}
\end{equation*}
\begin{equation*}
C_{\ell,\alpha}^{n,m}:=V_{\ell,\alpha}^{n,m}\setminus B_{\ell,\alpha}^{n,m}:=\bigcup_{\hat{k}=1}^{\sigma_{\ell-\alpha}^{\alpha-1,m}}(V_{\ell-\alpha,1}^{\alpha-1,1}\cup V_{\ell-\alpha,2}^{\alpha-1,1}).
\end{equation*}
By the definition of $C_{\ell,\alpha}^{n,m},$ it can be written as
$$C_{\ell,\alpha}^{n,m}:=C_{\ell,\alpha,1}^{n,m}\cup C_{\ell,\alpha,2}^{n,m},$$
where
$$C_{\ell,\alpha,1}^{n,m}:=\bigcup_{\hat{k}=1}^{\sigma_{\ell-\alpha}^{\alpha-1,m}}V_{\ell-\alpha,1}^{\alpha-1,1},\text{ }C_{\ell,\alpha,2}^{n,m}:=\bigcup_{\hat{k}=1}^{\sigma_{\ell-\alpha}^{\alpha-1,m}}V_{\ell-\alpha,2}^{\alpha-1,1}.$$
By Proposition \ref{1} and the structure of $SG_{\ell}^{n},$ $B_{\ell,\alpha}^{n,m}$ is the union of $\sigma_{\ell-2\alpha}^{\alpha-1,m}$ vertex sets of the graph $G_{\ell-2\alpha}^{\alpha-1,1}.$ Thus, the set can also be written as
\begin{equation*}
B_{\ell,\alpha}^{n,m}:=B_{\ell,\alpha,1}^{n,m}\cup B_{\ell,\alpha,2}^{n,m}\cup B_{\ell,\alpha,3}^{n,m}:=\bigcup_{\hat{k}=1}^{\sigma_{\ell-2\alpha}^{\alpha-1,m}}E_{0,l-2\alpha}^{1,\alpha-1},
\end{equation*}
where
$$B_{\ell,\alpha,1}^{n,m}:=\bigcup_{\hat{k}=1}^{\sigma_{\ell-2\alpha}^{\alpha-1,m}}V_{\ell-2\alpha,1}^{\alpha-1,1},\text{ }B_{\ell,\alpha,2}^{n,m}:=\bigcup_{\hat{k}=1}^{\sigma_{\ell-2\alpha}^{\alpha-1,m}}V_{\ell-2\alpha,2}^{\alpha-1,1},\text{ }B_{\ell,\alpha,3}^{n,m}:=\bigcup_{\hat{k}=1}^{\sigma_{\ell-2\alpha}^{\alpha-1,m}}\bigcup_{k=3}^{\ell-2\alpha}V_{\ell-2\alpha,k}^{\alpha-1,1}.$$
According to the structure of $SG_{\ell}^{n}$ and Proposition \ref{1}, $B_{\ell,\alpha,3}^{n,m}$ is the union of $\sigma_{\ell-3\alpha}^{\alpha-1,m}$ vertex sets of the graph $G_{\ell-3\alpha}^{\alpha-1,1}.$ Hence, we have
\begin{equation*}
B_{\ell,\alpha,3}^{n,m}:=B_{\ell,\alpha,3,1}^{n,m}\cup B_{\ell,\alpha,3,2}^{n,m}:=\bigcup_{\hat{k}=1}^{\sigma_{\ell-3\alpha}^{\alpha-1,m}}E_{\ell-3\alpha,0}^{\alpha-1,1},
\end{equation*}
where
$$B_{\ell,\alpha,3,1}^{n,m}:=\bigcup_{\hat{k}=1}^{\sigma_{\ell-3\alpha}^{\alpha-1,m}}V_{\ell-3\alpha,1}^{\alpha-1,1},\text{ }B_{\ell,\alpha,3,2}^{n,m}:=\bigcup_{\hat{k}=1}^{\sigma_{\ell-3\alpha}^{\alpha-1,m}}\bigcup_{k=2}^{\ell-3\alpha}V_{\ell-3\alpha,k}^{\alpha-1,1}.$$

To better understand the definitions of these sets, we take $SG_{11}^{2}$ as an example. On the left side of Figure \ref{fig:4}, the red and purple points are in $C_{11,3}^{2,1},$ with the red points in $C_{11,3,1}^{2,1}$ and the purple points in $C_{11,3,2}^{2,1}.$ The green points are in $B_{11,3}^{2,1}.$ On the right side of Figure \ref{fig:4}, the blue points are in $B_{11,3,1}^{2,1},$ the orange points are in $B_{11,3,2}^{2,1},$ and others are in $B_{11,3,3}^{2,1}.$ The yellow points are in $B_{11,3,3,1}^{2,1},$ and the brown points are in $B_{11,3,3,2}^{2,1}.$
\begin{figure}[t]
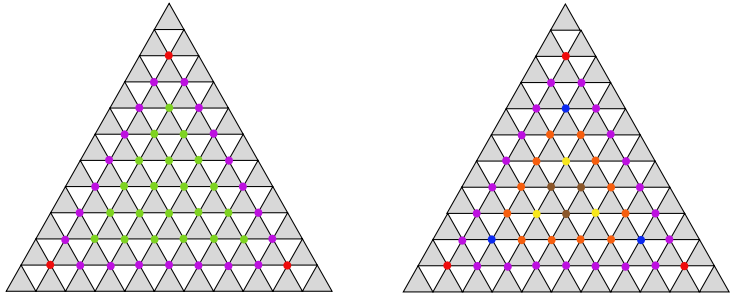

\centering


\caption{The set $V_{11,3}^{2,1}$ on $G_{11}^{2,1}$.}
\label{fig:4}
\end{figure}

We now define the value of the points in $V_{\ell,\alpha}^{n,1}$ $(3\leq\alpha\leq n+1)$ under the harmonic extension. Let $e_{0}^{1}\in C_{\ell,\alpha,1}^{n,1}\cup B_{\ell,\alpha,1}^{n,1}\cup B_{\ell,\alpha,3,1}^{n,1}.$ If $e_{0}^{1}$ is the $i$-th vertex on the graph $G_{\ell-\alpha}^{\alpha-1,1},$ $G_{\ell-2\alpha}^{\alpha-1,1}$ or $G_{\ell-3\alpha}^{\alpha-1,1},$ then there exist mappings $F_{k_{j}}$ and $q_{\bar{k}_{j}}\in V_{0}$ $(j=1,2,...\alpha)$ such that $e_{0}^{1}=F_{k_{j}}q_{\bar{k}_{j}},$ where $k_{1}<k_{2}<\cdots<k_{\alpha}$ and $\bar{k}_{1}>\bar{k}_{2}>\cdots>\bar{k}_{\alpha}.$ Therefore, we have
\begin{equation}\label{eq:3.7}
h(e_{0}^{1})=b_{1,\alpha}h(q_{i})+b_{2,\alpha}\sum_{j\in\{\bar{k}_{i}\}\setminus{i}}h(q_{j})+b_{3,\alpha}\sum_{\tilde{\ell}\notin\{\bar{k}_{i}\}}h(q_{\tilde{\ell}}),
\end{equation}
where $0<b_{3,\alpha}<b_{2,\alpha}<b_{1,\alpha}<1$ and $b_{1,\alpha}+(\alpha-1)b_{2,\alpha}+(n+1-\alpha)b_{3,\alpha}=1.$ The coefficients $b_{k,\alpha}$ $(k=1,2,3)$ vary depending on the solution of $AX=B.$

For any $e_{0}^{1}\in V_{\ell,\alpha}^{n,1}\setminus(C_{\ell,\alpha,1}^{n,1}\cup B_{\ell,\alpha,1}^{n,1}\cup B_{\ell,\alpha,3,1}^{n,1}),$ there exist mappings $F_{k_{j}}$ and $q_{\bar{k}_{j}}\in V_{0}$ $(j=1,2,...,\alpha)$ such that $e_{0}^{1}=F_{k_{j}}q_{\bar{k}_{j}},$ where $k_{1}<k_{2}<\cdots<k_{\alpha}$ and $\bar{k}_{1}>\bar{k}_{2}>\cdots>\bar{k}_{\alpha}.$ We have
\begin{equation}\label{eq:3.8}
h(e_{0}^{1})=\sum_{j\in\{\bar{k}_{i}\}}\bar{b}_{j,\alpha}h(q_{j})+\bar{b}_{\alpha}\sum_{\tilde{\ell}\notin\{\bar{k}_{i}\}}h(q_{\tilde{\ell}}),
\end{equation}
where $0<\bar{b}_{\alpha}<\bar{b}_{j,\alpha}<1,$ $\bar{b}_{i,\alpha}=\max_{j\in\{\bar{k}_{j}\}}\bar{b}_{j,\alpha}$ and the index $i$ indicates that the point $e_{0}^{1}$ is adjacent to the boundary point  $q_{i}.$ Moreover, $\sum_{j\in\{\bar{k}_{j}\}}\bar{b}_{j,\alpha}+(n+1-\alpha)\bar{b}_{\alpha}=1.$

When $\alpha=n+1,$ $V_{n+1,n+1}^{n,1}$ consists of only one point. This unique point satisfies $e_{0}^{1}=F_{k_{i}}q_{i}$ for all $i=0,1,...,n,$ where $q_{i}\in V_{0}.$ This implies:
\begin{equation}\label{eq:3.9}
h(e_{0}^{1})=\frac{1}{n+1}\sum_{i=0}^{n}h(q_{i}).
\end{equation}

Next, we characterize the representation of edges in $E_{\ell,1}^{n,m+1}$ under harmonic $1$-forms. For any harmonic $1$-form $h_{1}^{m}\in\mathcal{H}_{\ell,1}^{n,m},$ by\cite[Theorem 2.7]{Strichartz_2002}, we have $h_{1}^{m}=d_{0}^{m}f_{0}^{m}$ for some harmonic mapping $f_{0}^{m}.$ For any $[p,q]\in E_{\ell,1}^{n,m},$ we have
$$h_{1}^{m}([p,q])=f_{0}^{m}(q)-f_{0}^{m}(p).$$
Let $p_{j}\in V_{\ell,2}^{n,m+1}$ satisfy $p_{j}=F_{i}q_{j}=F_{k}q_{i},$ and let $p_{j}$ be connected to $q_{i}\in V_{0}.$ For any edge $e_{1}^{m+1}=[q_{i},p_{j}]\in E_{\ell,1}^{n,m+1}$ where $p_{j}$ is an endpoint and $q_{i}\in V_{0},$ equation $(\ref{eq:3.4})$ yields
\begin{equation}\label{eq:3.10}
h_{1}^{m+1}([q_{i},p_{j}])=\sum_{\tilde{\ell}=0, \tilde{\ell}\neq i}^{n}\tilde{a}_{\tilde{\ell}}h_{1}^{m}([q_{i},q_{\tilde{\ell}}])
\end{equation}
where
\begin{equation*}
\tilde{a}_{\tilde{\ell}}=\begin{cases}
a_{2}, \quad & \tilde{\ell}=j;\\
a_{3}, \quad & \tilde{\ell}\neq i,j.
\end{cases}
\end{equation*}

For any edge $e_{1}^{m+1}=[p_{j},p_{k}]\in E_{\ell,1}^{n,m+1},$ if $p_{k}\in V_{\ell,2}^{n,m+1},$ and both $p_{j}$ and $p_{k}$ are connected to $q_{i}\in V_{0},$ given $p_{k}=F_{i}q_{k}=F_{k_{2}}q_{i},$ then equation $(\ref{eq:3.4})$ yields
\begin{equation}\label{eq:3.11}
h_{1}^{m+1}([p_{j},p_{k}])=(a_{2}-a_{3})h_{1}^{m}([q_{j},q_{k}]).
\end{equation}

For any edge $e_{1}^{m+1}=[p_{j},p_{k}]\in E_{\ell,1}^{n,m+1},$ if $p_{k}\in V_{\ell,2}^{n,m+1}$ is not connected to any point in $V_{0},$ given $p_{k}=F_{k_{1}}q_{j}=F_{k_{2}}q_{i},$ then equations $(\ref{eq:3.4})$ and $(\ref{eq:3.5})$ yield
\begin{equation}\label{eq:3.12}
h_{1}^{m+1}([p_{j},p_{k}])=\sum_{\tilde{\ell}=0, \tilde{\ell}\neq i}^{n}\tilde{a}_{\tilde{\ell}}^{1}h_{1}^{m}([q_{i},q_{\tilde{\ell}}]),
\end{equation}
where
\begin{equation*}
\tilde{a}_{\tilde{\ell}}^{1}=\begin{cases}
\bar{a}_{2}-a_{2}, \quad & \tilde{\ell}=j;\\
\bar{a}_{3}-a_{3}, \quad & \tilde{\ell}\neq i,j.
\end{cases}
\end{equation*}

For any edge $e_{1}^{m+1}=[p_{j},p_{k}]\in E_{\ell,1}^{n,m+1},$ if $p_{k}\in C_{\ell,3,1}^{n,m+1}$ and $p_{k}=F_{k_{1}}q_{\bar{\ell}}=F_{k_{2}}q_{j}=F_{k_{3}}q_{i},$ then equations $(\ref{eq:3.4})$ and $(\ref{eq:3.7})$ yield
\begin{equation}\label{eq:3.13}
h_{1}^{m+1}([p_{j},p_{k}])=\sum_{\tilde{\ell}=0, \tilde{\ell}\neq i}^{n}\tilde{a}_{\tilde{\ell}}^{2}h_{1}^{m}([q_{i},q_{\tilde{\ell}}]),
\end{equation}
where
\begin{equation*}
\tilde{a}_{\tilde{\ell}}^{2}=\begin{cases}
b_{2,3}-a_{2}, \quad & \tilde{\ell}=j;\\
b_{2,3}-a_{3}, \quad & \tilde{\ell}=\bar{\ell};\\
b_{3,3}-a_{3}, \quad & \tilde{\ell}\neq i,j,\bar{\ell}.
\end{cases}
\end{equation*}

Let $p_{i}\in V_{\ell,2}^{n,m+1}$ be a point not connected to any point in $V_{0},$ and let $p_{i}=F_{k_{1}}q_{i}=F_{k_{2}}q_{j}.$ For any edge $e_{1}^{m+1}=[p_{i},p_{j}]\in E_{\ell,1}^{n,m+1},$ if $p_{j}$ is not connected to any point in $V_{0},$ and $p_{j}=F_{k_{1}}q_{j}=F_{k_{3}}q_{i},$ then by using equation $(\ref{eq:3.5}),$ we have
\begin{equation}\label{eq:3.14}
h_{1}^{m+1}([p_{i},p_{j}])=\left\{
\begin{aligned}
(\bar{a}_{1}-\bar{a}_{2})h_{1}^{m}([q_{i},q_{j}]) \\
\sum_{\tilde{\ell}=0, \tilde{\ell}\neq i}^{n}\tilde{a}_{\tilde{\ell}}^{3}h_{1}^{m}([q_{\tilde{\ell}},q_{i}])
\end{aligned}
\right.
\end{equation}
where
\begin{equation*}
\tilde{a}_{\tilde{\ell}}^{3}=\begin{cases}
\bar{a}_{2}-\bar{a}_{2}, \quad & \tilde{\ell}=j;\\
\bar{a}_{3}-\bar{a}_{3}, \quad & \tilde{\ell}\neq i,j.
\end{cases}
\end{equation*}
Note that $\bar{a}_{i}$ $(i=2,3)$ are not equal.

For any edge $e_{1}^{m+1}=[p_{i},p_{j}]\in E_{\ell,1}^{n,m+1}$ where $p_{j}$ is the midpoint of the edge $[q_{i},q_{j}]$ and $p_{j}=F_{i}q_{j}=F_{j}q_{i},$ equations $(\ref{eq:3.5})$ and $(\ref{eq:3.6})$ yield
\begin{equation}\label{eq:3.15}
h_{1}^{m+1}=\sum_{\tilde{\ell}=0, \tilde{\ell}\neq i}^{n}\tilde{a}_{\tilde{\ell}}^{4}h_{1}^{m}([q_{i},q_{\tilde{\ell}}]),
\end{equation}
where
\begin{equation*}
\tilde{a}_{\tilde{\ell}}^{4}=\begin{cases}
\hat{a}_{1}-\bar{a}_{2}, \quad & \tilde{\ell}=j;\\
\hat{a}_{2}-\bar{a}_{3}, \quad & \tilde{\ell}\neq i,j.
\end{cases}
\end{equation*}

For any edge $e_{1}^{m+1}=[p_{i},p_{j}]\in E_{\ell,1}^{n,m+1},$ if $p_{j}\in C_{\ell,3,1}^{n,m+1}$ satisfies $p_{j}=F_{k_{1}}q_{\bar{\ell}}=F_{k_{3}}q_{j}=F_{k_{4}}q_{i},$ then by applying \eqref{eq:3.5} and \eqref{eq:3.7}, we obtain
\begin{equation}\label{eq:3.16}
h_{1}^{m+1}=\sum_{\tilde{\ell}=0, \tilde{\ell}\neq i}^{n}\tilde{a}_{\tilde{\ell}}^{5}h_{1}^{m}([q_{\tilde{\ell}},q_{i}]),
\end{equation}
where
\begin{equation*}
\tilde{a}_{\tilde{\ell}}^{5}=\begin{cases}
\bar{a}_{2}-b_{2,3}, \quad & \tilde{\ell}=j;\\
\bar{a}_{3}-b_{2,3}, \quad & \tilde{\ell}=\bar{\ell};\\
\bar{a}_{3}-b_{3,3}, \quad & \tilde{\ell}\neq i,j,\bar{\ell}.\\
\end{cases}
\end{equation*}

For any edge $e_{1}^{m+1}=[p_{i},p_{j}]\in E_{\ell,1}^{n,m+1}$ where $p_{j}\in C_{\ell,3,2}^{n,m+1}$ and $p_{j}=F_{k_{1}}q_{\bar{\ell}}=F_{k_{3}}q_{j}=F_{k_{4}}q_{i},$ by applying equations $(\ref{eq:3.5})$ and $(\ref{eq:3.8}),$ we have
\begin{equation}\label{eq:3.17}
h_{1}^{m+1}=\sum_{\tilde{\ell}=0, \tilde{\ell}\neq i}^{n}\tilde{a}_{\tilde{\ell}}^{6}h_{1}^{m}([q_{i},q_{\tilde{\ell}}]),
\end{equation}
where
\begin{equation*}
\tilde{a}_{\tilde{\ell}}^{6}=\begin{cases}
\bar{b}_{2,3}-\bar{a}_{2}, \quad & \tilde{\ell}=j;\\
\bar{b}_{3,3}-\bar{a}_{3}, \quad & \tilde{\ell}=\bar{\ell};\\
\bar{b}_{4,3}-\bar{a}_{3}, \quad & \tilde{\ell}\neq i,j,\bar{\ell}.\\
\end{cases}
\end{equation*}

Let $p_{ij}$ denote the midpoint of the edge $[q_{i},q_{j}].$ For any edge $e_{1}^{m+1}=[p_{ij},p_{j}]\in E_{\ell,1}^{n,m+1}$ where $p_{j}\in C_{\ell,3,1}^{n,m+1},$ we consider the following conditions:
\begin{enumerate}[label=(\alph*)]
\item when $\ell=4,$ $p_{j}$ is connected to the vertices $F_{i}q_{j}$ and $F_{j}q_{\bar{\ell}}$;
\item when $\ell\neq 4,$ the connection pattern is reversed (opposite to Case (\textrm{a})).
\end{enumerate}
Given $p_{j}=F_{k_{1}}q_{\bar{\ell}}=F_{k_{3}}q_{j}=F_{k_{4}}q_{i},$ combining equations $(\ref{eq:3.6}),$ $(\ref{eq:3.7})$ and $(\ref{eq:3.8}),$ we have
\begin{equation}\label{eq:3.18}
h_{1}^{m+1}([p_{ij},p_{j}])=\left\{
\begin{aligned}
\sum_{\tilde{\ell}=0, \tilde{\ell}\neq j}^{n}\tilde{a}_{\tilde{\ell}}^{7}h_{1}^{m}([q_{j},q_{\tilde{\ell}}]) \quad & \ell=4,\\
\sum_{\tilde{\ell}=0, \tilde{\ell}\neq j}^{n}\tilde{a}_{\tilde{\ell}}^{8}h_{1}^{m}([q_{j},q_{\tilde{\ell}}]) \quad & \ell>4,
\end{aligned}
\right.
\end{equation}
where when $\ell=4,$
\begin{equation*}
\tilde{a}_{\tilde{\ell}}^{7}=\begin{cases}
b_{2,3}-\hat{a}_{1}, \quad & \tilde{\ell}=i;\\
b_{2,3}-\hat{a}_{3}, \quad & \tilde{\ell}=\bar{\ell};\\
b_{3,3}-\hat{a}_{3}, \quad & \tilde{\ell}\neq i,j,\bar{\ell};
\end{cases}
\end{equation*}
when $\ell>4,$
\begin{equation*}
\tilde{a}_{\tilde{\ell}}^{8}=\begin{cases}
\bar{b}_{1,3}-\hat{a}_{1}, \quad & \tilde{\ell}=i;\\
\bar{b}_{3,3}-\hat{a}_{3}, \quad & \tilde{\ell}=\bar{\ell};\\
\bar{b}_{4,3}-\hat{a}_{3}, \quad & \tilde{\ell}\neq i,j,\bar{\ell}.
\end{cases}
\end{equation*}

For any edge $e_{1}^{m+1}=[p_{i},p_{j}]\in E_{\ell,1}^{n,m+1}$ with starting point $p_{i}\in V_{\ell,\alpha}^{n,m+1}$ $(\text{where }\alpha>2),$  we classify the analysis into three cases through the application of equations $(\ref{eq:3.7})$ and $(\ref{eq:3.8}).$

\noindent\textit{Case $I.$} The endpoints of the edge are expressed as in $(\ref{eq:3.7}).$
\begin{enumerate}[label=(\roman*)]
\item Both endpoints belong to $C_{\ell,\alpha,1}^{n,m+1}.$
\item Both endpoints belong to $B_{\ell,\alpha,1}^{n,m+1}.$
\item Both endpoints belong to $B_{\ell,\alpha,3,1}^{n,m+1}.$
\end{enumerate}

In all these cases, for any edge $e_{1}^{m+1}=[p_{i},p_{j}]\in E_{\ell,1}^{n,m+1},$ we have
\begin{equation}\label{eq:3.19}
h_{1}^{m+1}([p_{i},p_{j}])=(b_{2,\alpha}-b_{3,\alpha})h_{1}^{m}([q_{\bar{j}},q_{\bar{k}}]),
\end{equation}
where the indexes $\bar{j}$ and $\bar{k}$ refer to the subscripts of two endpoints at different iteration points under the IFS maps.

However, there is a special case, namely, if $p_{i}\in C_{\ell,\alpha,1}^{n,m+1}$ and $p_{j}\in C_{\ell,\alpha+1,1}^{n,m+1},$ then we have
\begin{equation}\label{eq:3.20}
h_{1}^{m+1}([p_{i},p_{j}])=\sum_{\tilde{\ell}=0, \tilde{\ell}\neq i}^{n}r_{\tilde{\ell}}^{\alpha,1}h_{1}^{m}([q_{i},q_{\tilde{\ell}}]),
\end{equation}
where
\begin{equation*}
r_{\tilde{\ell}}^{\alpha,1}=\begin{cases}
b_{2,\alpha}-b_{2,\alpha+1}, \quad & \tilde{\ell}\in\{\bar{k}_{j}\}\setminus\{i\};\\
b_{3,\alpha}-b_{2,\alpha+1}, \quad & \tilde{\ell}=\hat{k};\\
b_{3,\alpha}-b_{3,\alpha+1}, \quad & \tilde{\ell}\notin\{\bar{k}_{j}\cup\{\bar{k}\}.
\end{cases}
\end{equation*}
Note that the set $\{\bar{k}_{j}\}$ denotes the index subscripts of the points $q_{\bar{k}{j}}$ satisfying $p_{i}=F_{\omega_{j}}q_{\bar{k}_{j}},$ while $\hat{k}$ represents the incremental index value of the terminal point relative to the starting point.

\noindent\textit{Case $II.$} The two endpoints of the edge are given by the expressions in $(\ref{eq:3.7})$ and $(\ref{eq:3.8}).$ In this case, we have two situations.
\begin{enumerate}[label=(\roman*),itemsep=0pt, parsep=0pt]
\item The starting and terminal points are given in $(\ref{eq:3.7})$ and $(\ref{eq:3.8}),$ respectively.
\begin{enumerate}[label=(\alph*)]
\item The starting point belongs to $C_{\ell,\alpha,2}^{n,m+1}$ and is connected to the points in $B_{\ell,\alpha,1}^{n,m+1},$ and the terminal point belongs to $B_{\ell,\alpha,1}^{n,m+1}.$
\item The starting point belongs to $B_{\ell,\alpha,2}^{n,m+1},$ and the terminal point belongs to $B_{\ell,\alpha,3,1}^{n,m+1}.$

In both Cases $({\rm a})$ and $({\rm b}),$ we have
\begin{equation}\label{eq:3.21}
h_{1}^{m+1}(e_{1}^{m+1})=\sum_{\tilde{\ell}=0, \tilde{\ell}\neq i}^{n}r_{\tilde{\ell}}^{\alpha,2}h_{1}^{m}([q_{i},q_{\tilde{\ell}}]),
\end{equation}
where
\begin{equation*}
r_{\tilde{\ell}}^{\alpha,2}=\begin{cases}
b_{2,\alpha}-\bar{b}_{\tilde{\ell},\alpha}, \quad & \tilde{\ell}\in\{\bar{k}_{j}\}\setminus\{i\};\\
b_{3,\alpha}-\bar{b}_{\alpha}, \quad & \tilde{\ell}\notin\{\bar{k}_{j}\}.
\end{cases}
\end{equation*}

\item The starting point belongs to $C_{\ell,\alpha,2}^{n,m+1}$ and is connected to the points in $B_{\ell,\alpha,1}^{n,m+1};$ the terminal point belongs to $C_{\ell,\alpha+1,1}^{n,m+1},$ we have
\begin{equation}\label{eq:3.22}
h_{1}^{m+1}(e_{1}^{m+1})=\sum_{\tilde{\ell}=0, \tilde{\ell}\neq i}^{n}r_{\tilde{\ell}}^{\alpha,3}h_{1}^{m}([q_{i},q_{\tilde{\ell}}]),
\end{equation}
where
\begin{equation*}
r_{\tilde{\ell}}^{\alpha,3}=\begin{cases}
b_{2,\alpha+1}-\bar{b}_{\tilde{\ell},\alpha}, \quad & \tilde{\ell}\in\{\bar{k}_{j}\}\setminus\{i\};\\
 b_{2,\alpha+1}-\bar{b}_{\alpha}, \quad & \tilde{\ell}=\hat{k};\\
b_{3,\alpha+1}-\bar{b}_{\alpha}, \quad & \tilde{\ell}\notin\{\bar{k}_{j}\}\cup\{\hat{k}\}.
\end{cases}
\end{equation*}
Note that $\hat{k}$ denotes an index belonging to the terminal point's index set but not to the starting point's iteration indices.
\end{enumerate}

\item The starting and terminal points are given in $(\ref{eq:3.8})$ and $(\ref{eq:3.7}),$ respectively.
\begin{enumerate}[label=(\alph*),itemsep=0pt, parsep=0pt]
\item The starting point belongs to $C_{\ell,\alpha,1}^{n,m+1},$ and the terminal point belongs to $C_{\ell,\alpha,2}^{n,m+1}$ and is connected to the points in $B_{\ell,\alpha,1}^{n,m+1}.$
\item The starting point belongs to $B_{\ell,\alpha,1}^{n,m+1},$ and the terminal point belongs to $B_{\ell,\alpha,2}^{n,m+1}.$
\item The starting point belongs to $B_{\ell,\alpha,3,1}^{n,m+1},$ and the terminal point belongs to $B_{\ell,\alpha,3,2}^{n,m+1}.$

For Cases $({\rm a})-({\rm c}),$ we have
\begin{equation}\label{eq:3.23}
h_{1}^{m+1}(e_{1}^{m+1})=\sum_{\tilde{\ell}=0, \tilde{\ell}\neq i}^{n}r_{\tilde{\ell}}^{\alpha,4}h_{1}^{m}([q_{i},q_{\tilde{\ell}}]),
\end{equation}
where
\begin{equation*}
r_{\tilde{\ell}}^{\alpha,4}=\begin{cases}
\bar{b}_{\tilde{\ell},\alpha}-b_{2,\alpha}, \quad & \tilde{\ell}\in\{\bar{k}_{j}\}\setminus\{i\};\\
\bar{b}_{\alpha}-b_{3,\alpha}, \quad & \tilde{\ell}\notin\{\bar{k}_{j}\}.
\end{cases}
\end{equation*}

\item The starting point belongs to $C_{\ell,\alpha,1}^{n,m+1},$ and the terminal point belongs to $C_{\ell,\alpha+1,2}^{n,m+1}.$
\item The starting point belongs to $B_{\ell,\alpha,1}^{n,m+1},$ and the terminal point belongs to $C_{\ell,\alpha+1,2}^{n,m+1}.$
\item The starting point belongs to $B_{\ell,\alpha,3,1}^{n,m+1},$ and the terminal point belongs to $C_{\ell,\alpha+1,2}^{n,m+1}.$

For Cases $({\rm d})-({\rm{f}}),$ we have
\begin{equation}\label{eq:3.24}
h_{1}^{m+1}(e_{1}^{m+1})=\sum_{\tilde{\ell}=0, \tilde{\ell}\neq i}^{n}r_{\tilde{\ell}}^{\alpha,5}h_{1}^{m}([q_{\tilde{\ell}},q_{i}]),
\end{equation}
where
\begin{equation*}
r_{\tilde{\ell}}^{\alpha,5}=\begin{cases}
\bar{b}_{\tilde{\ell},\alpha+1}-b_{2,\alpha}, \quad & \tilde{\ell}\in\{\bar{k}_{j}\}\setminus\{i\};\\
\bar{b}_{\hat{k},\alpha+1}-b_{3,\alpha}, \quad & \tilde{\ell}=\hat{k};\\
\bar{b}_{\alpha+1}-b_{3,\alpha}, \quad & \tilde{\ell}\notin\{\bar{k}_{j}\}\cup\{\hat{k}\}.
\end{cases}
\end{equation*}
Note that $\hat{k}$ is defined as above.
\end{enumerate}
\end{enumerate}

\noindent\textit{Case $III.$} The endpoints of the edge are the expressions of $(\ref{eq:3.8}).$
\begin{enumerate}[label=(\alph*),itemsep=0pt, parsep=0pt]
\item The starting point belongs to $C_{\ell,\alpha,2}^{n,m+1}$ and is connected to points in $B_{\ell,\alpha,1}^{n,m+1}$ and $B_{\ell,\alpha,2}^{n,m+1}$; the terminal point belongs to $C_{\ell,\alpha,2}^{n,m+1}$ and is connected to points in $B_{\ell,\alpha,2}^{n,m+1}$.
\item The starting point belongs to $C_{\ell,\alpha,2}^{n,m+1}$ and is connected to points in $B_{\ell,\alpha,1}^{n,m+1}$ and $B_{\ell,\alpha,2}^{n,m+1}$; the terminal point belongs to $B_{\ell,\alpha,2}^{n,m+1}$.
\item The starting point belongs to $C_{\ell,\alpha,2}^{n,m+1}$ and is connected to points in $B_{\ell,\alpha,2}^{n,m+1}$; the terminal point belongs to $B_{\ell,\alpha,2}^{n,m+1}$.
\item The starting point belongs to $B_{\ell,\alpha,2}^{n,m+1}$; the terminal point belongs to $B_{\ell,\alpha,3,2}^{n,m+1}$.

In Cases $({\rm a})-({\rm d}),$ we have
\begin{equation}\label{eq:3.25}
h_{1}^{m+1}(e_{1}^{m+1})=\sum_{\tilde{\ell}=0, \tilde{\ell}\neq i}^{n}r_{\tilde{\ell}}^{\alpha,6}h_{1}^{m}([q_{i},q_{\tilde{\ell}}]),
\end{equation}
where
\begin{equation*}
r_{\tilde{\ell}}^{\alpha,6}=\begin{cases}
\bar{b}_{\tilde{\ell},\alpha}-\bar{b}_{\tilde{\ell},\alpha}, \quad & \tilde{\ell}\in\{\bar{k}_{j}\}\setminus\{i\};\\
\bar{b}_{\alpha}-\bar{b}_{\alpha}, \quad & \tilde{\ell}\notin\{\bar{k}_{j}\}.
\end{cases}
\end{equation*}
Since the endpoints belong to different sets, their associated coefficients are distinct, and this ensures that $r_{\tilde{\ell}}^{\alpha,6}$ is nonzero.

\item The starting point belongs to $C_{\ell,\alpha,2}^{n,m+1}$ and is connected to points in $B_{\ell,\alpha,1}^{n,m+1}$ and $B_{\ell,\alpha,2}^{n,m+1}$; the terminal point belongs to $C_{\ell,\alpha+1,2}^{n,m+1}$.
\item The starting point belongs to $C_{\ell,\alpha,2}^{n,m+1}$ and is connected to points in $B_{\ell,\alpha,2}^{n,m+1}$; the terminal point belongs to $C_{\ell,\alpha+1,2}^{n,m+1}$.
\item The starting point belongs to $B_{\ell,\alpha,2}^{n,m+1}$; the terminal point belongs to $C_{\ell,\alpha+1,2}^{n,m+1}$.
\item The starting point belongs to $B_{\ell,\alpha,3,2}^{n,m+1}$; the terminal point belongs to $C_{\ell,\alpha+1,2}^{n,m+1}$.

In Cases $({\rm e})-({\rm h}),$ we have
\begin{equation}\label{eq:3.26}
h_{1}^{m+1}(e_{1}^{m+1})=\sum_{\tilde{\ell}=0, \tilde{\ell}\neq i}^{n}r_{\tilde{\ell}}^{\alpha,7}h_{1}^{m}([q_{i},q_{\tilde{\ell}}]),
\end{equation}
where
\begin{equation*}
r_{\tilde{\ell}}^{\alpha,7}=\begin{cases}
\bar{b}_{\tilde{\ell},\alpha+1}-\bar{b}_{\tilde{\ell},\alpha}, \quad & \tilde{\ell}\in\{\bar{k}_{j}\}\setminus\{i\};\\
\bar{b}_{\hat{k},\alpha+1}-\bar{b}_{\alpha}, \quad & \tilde{\ell}=\hat{k};\\
\bar{b}_{\alpha+1}-\bar{b}_{\alpha}, \quad & \tilde{\ell}\notin\{\bar{k}_{j}\}\cup\{\hat{k}\}.
\end{cases}
\end{equation*}
Note that $\hat{k}$ represents the incremental index value of the terminal point relative to the starting point's indices.
\end{enumerate}

In Case $\textrm{III},$ there are three situations, namely:
\begin{enumerate}[label=(\alph*),itemsep=0pt,parsep=0pt]
\item The endpoints belong to $C_{\ell,\alpha,2}^{n,m+1}$ and are connected to the points in $B_{\ell,\alpha,1}^{n,m+1}.$
\item The endpoints belong to $B_{\ell,\alpha,2}^{n,m+1}.$
\item The endpoints belong to $B_{\ell,\alpha,3,2}^{n,m+1}.$
\end{enumerate}
In all cases above, we have
\begin{equation}\label{eq:3.27}
h_{1}^{m+1}(e_{1}^{m+1})=(\bar{b}_{i,\alpha}-\bar{b}_{j,\alpha})h_{1}^{m}([q_{i},q_{j}]),
\end{equation}
where the indices $i$ and $j$ correspond to the endpoints of the edge $e_1^{m+1}$ and are generated by $F_{k}q_{i}$ and $F_{k}q_{j},$ respectively.

Before proving Theorem \ref{thm:1.1}, we first establish several propositions concerning $1$-forms obtained through harmonic extension from generation $m$ to generation $m+1.$ These propositions characterize the behavior of such $1$-forms on both edges of triangles and edges constructed by connecting pairs of vertices, thereby laying the necessary foundation for our main theorem.

For any triangle $e_{2}^{m+1}=[p_{i},p_{j},p_{\bar{\ell}}]\in E_{\ell,2}^{n,m+1},$ by the definition of de Rham derivative, we have
\begin{equation}\label{eq:3.28}
\begin{split}
d_{1}^{m+1}h_{1}^{m+1}(e_{2}^{m+1})&=\sum_{e_{1}^{m+1}\in E_{\ell,1}^{n,m+1}}{\rm sgn}(e_{1}^{m+1},e_{2}^{m+1})h_{1}^{m+1}(e_{1}^{m+1})\\
&=\sum_{e_{1}^{m+1}\subset e_{2}^{m+1}}{\rm sgn}(e_{1}^{m+1},e_{2}^{m+1})h_{1}^{m+1}(e_{1}^{m+1})\\
&=h_{1}^{m+1}([p_{i},p_{j}])+h_{1}^{m+1}([p_{j},p_{\bar{\ell}}])+h_{1}^{m+1}([p_{\bar{\ell}},p_{i}]).
\end{split}
\end{equation}
The last equality holds because the orientation of these edges $[p_{i},p_{j}],[p_{j},p_{\bar{\ell}}]$ and $[p_{\bar{\ell}},p_{i}]$ follows the positive direction on triangle $e_{2}^{m+1}.$ By the definition of parity function, we have ${\rm sgn}=1.$

\begin{prop}\label{prop:1}
For any $h_{1}^{m}\in\mathcal{H}_{\ell,1}^{n,m},$ let $h_{1}^{m+1}$ be a $1$-form obtained by extending $h_{1}^{m}$ to the graph $G_{\ell}^{n,m+1}.$ For any triangle $e_{2}^{m+1}=[p_{i},p_{j},p_{\bar{\ell}}]\in E_{\ell,2}^{n,m+1},$ if $p_{i}\in E_{\ell,0}^{n,m},$ then
$$d_{1}^{m+1}h_{1}^{m+1}(e_{2}^{m+1})=0.$$
\end{prop}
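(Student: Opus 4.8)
\emph{Plan of proof.} Since $p_i\in E_{\ell,0}^{n,m}$, the generation-$(m+1)$ triangle $e_2^{m+1}$ lies inside a single generation-$m$ cell $F_\omega(K)$ ($|\omega|=m$), of which $p_i$ is a boundary vertex. Within $F_\omega(K)$ the point $p_i$ belongs to exactly one generation-$(m+1)$ $n$-simplex — the corner simplex at $p_i$ — so $e_2^{m+1}$ is a $2$-face of that corner simplex, its other two vertices $p_j,p_{\bar\ell}$ are both adjacent to $p_i$ and both lie in $V_{\ell,2}^{n,m+1}$, and $e_2^{m+1}=F_\omega F_{s}([q_s,q_j,q_k])$, where $F_s$ is the similitude fixing the corner $q_s$ of $e_n^0$ with $F_\omega q_s=p_i$, $p_j=F_\omega F_s q_j$, $p_{\bar\ell}=F_\omega F_s q_k$. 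Consequently $F_\omega[q_s,q_j,q_k]$ is a genuine $2$-simplex of $G_\ell^{n,m}$, i.e. an element of $E_{\ell,2}^{n,m}$. By $(\ref{eq:3.28})$,
\[
d_1^{m+1}h_1^{m+1}(e_2^{m+1})=h_1^{m+1}([p_i,p_j])+h_1^{m+1}([p_j,p_{\bar\ell}])+h_1^{m+1}([p_{\bar\ell},p_i]).
\]

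First I would substitute the explicit edge formulas established earlier in the section. The edges $[p_i,p_j]$ and $[p_i,p_{\bar\ell}]$ emanate from the old vertex $p_i$, so their values are given by (the $F_\omega$-pullback of) $(\ref{eq:3.10})$ when $\ell>n$, and by $(\ref{eq:3.14})$--$(\ref{eq:3.16})$ — according to whether $p_j$ or $p_{\bar\ell}$ is a midpoint or a higher-multiplicity vertex — in the remaining ranges of $\ell$; while $[p_j,p_{\bar\ell}]$ joins two new vertices both adjacent to $p_i$ and is therefore governed by $(\ref{eq:3.11})$ and its analogues. Using $h_1^{m+1}([p_{\bar\ell},p_i])=-h_1^{m+1}([p_i,p_{\bar\ell}])$ and the normalization $a_1+a_2+(n-1)a_3=1$ (and its counterparts for the other rules), the terms proportional to the value of $h_1^m$ at $p_i$ cancel between the $[p_i,p_j]$ and $[p_{\bar\ell},p_i]$ contributions, and collecting what remains reduces the sum, in every case, to
\[
d_1^{m+1}h_1^{m+1}(e_2^{m+1})=c\bigl(h_1^m(F_\omega[q_s,q_j])+h_1^m(F_\omega[q_j,q_k])+h_1^m(F_\omega[q_k,q_s])\bigr)
\]
for a single scalar $c$ depending only on the rule (explicitly $c=a_2-a_3$ when $\ell>n$). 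The parenthesized quantity is precisely $d_1^m h_1^m$ evaluated on the $2$-simplex $F_\omega[q_s,q_j,q_k]\in E_{\ell,2}^{n,m}$, which vanishes because $h_1^m$ is harmonic. Hence $d_1^{m+1}h_1^{m+1}(e_2^{m+1})=0$.

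Conceptually this computation just reflects the telescoping identity $d_1^{m+1}d_0^{m+1}\equiv 0$ of $(\ref{eq:2.7})$: on the cell $F_\omega(K)$ the extended form $h_1^{m+1}$ is the $d_0^{m+1}$-image of the harmonic extension of a local potential for $h_1^m$ (compare the passage from $(\ref{eq:3.4})$ to $(\ref{eq:3.10})$), so the three edge values telescope around $\partial_2 e_2^{m+1}$; the explicit formulas are used only to make this uniform across dimensions and levels and to pin down $c$.

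The one genuinely nontrivial point is the bookkeeping in the case analysis: one must check that for each admissible position of $p_j$ and $p_{\bar\ell}$, and separately for $\ell>n$, $\ell=n$, and $\ell<n$ (where the different rules $(\ref{eq:3.4})$--$(\ref{eq:3.9})$ control the non-corner vertices), the three substituted formulas really do collapse to one common multiple of a cyclic sum of $h_1^m$ around a $2$-simplex of $G_\ell^{n,m}$. Some care is also needed for the orientation signs built into $(\ref{eq:3.28})$, for the reversal $[p_{\bar\ell},p_i]=-[p_i,p_{\bar\ell}]$, and for the fact that $F_\omega F_s$ is orientation preserving, so that $F_\omega[q_s,q_j,q_k]$ inherits the orientation of $e_2^{m+1}$ and the cyclic sum indeed computes $d_1^m h_1^m$ on it.
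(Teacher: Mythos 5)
Your proposal is correct and takes essentially the same route as the paper's proof: because $p_i$ is a generation-$m$ vertex, the other two vertices lie in $V_{\ell,2}^{n,m+1}$ and are adjacent to $p_i$, so substituting $(\ref{eq:3.10})$ for the two edges at $p_i$ and $(\ref{eq:3.11})$ for the opposite edge makes the cyclic sum in $(\ref{eq:3.28})$ collapse to $(a_2-a_3)\,d_1^m h_1^m$ on the corresponding generation-$m$ triangle, which vanishes by harmonicity. The additional case analysis you anticipate via $(\ref{eq:3.14})$--$(\ref{eq:3.16})$ and the split over $\ell$ versus $n$ is not needed (those formulas govern edges between two new vertices not incident to the old corner), but this does not affect the validity of your argument.
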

\begin{proof}
Since $p_{i}\in E_{\ell,0}^{n,m},$ we have $p_{j},p_{\bar{\ell}}\in V_{\ell,2}^{n,m+1}.$ By using equations $(\ref{eq:3.10})$ and $(\ref{eq:3.11}),$ we can extend the right-hand side of $(\ref{eq:3.28})$ as follows:
\begin{equation*}
\begin{aligned}
&\qquad h_{1}^{m+1}([p_{i},p_{j}])+h_{1}^{m+1}([p_{j},p_{\bar{\ell}}])+h_{1}^{m+1}([p_{\bar{\ell}},p_{i}])\\
&=\sum_{\tilde{\ell}\neq i}\tilde{a}_{\tilde{\ell}}h_{1}^{m}([p_{i},q_{\tilde{\ell}}])+(a_{2}-a_{3})h_{1}^{m}([q_{j},q_{\bar{\ell}}])
+\sum_{\tilde{\ell}\neq i}\tilde{a}_{\tilde{\ell}}h_{1}^{m}([q_{\tilde{\ell}},p_{i}])\\
&=\sum_{\tilde{\ell}\neq i,j,\bar{\ell}}a_{3}h_{1}^{m}([p_{i},q_{\tilde{\ell}}])+a_{2}h_{1}^{m}([p_{i},q_{j}])+a_{3}h_{1}^{m}([p_{i},q_{\bar{\ell}}])+(a_{2}-a_{3})h_{1}^{m}([q_{j},q_{\bar{\ell}}])\\
&\quad +\sum_{\tilde{\ell}\neq i,j,\bar{\ell}}a_{3}h_{1}^{m}([q_{\tilde{\ell}},p_{i}])+a_{2}h_{1}^{m}([q_{\bar{\ell}},p_{i}])+a_{3}h_{1}^{m}([q_{j},p_{i}])\\
&=(a_{2}-a_{3})h_{1}^{m}([p_{i},q_{j}])+(a_{2}-a_{3})h_{1}^{m}([q_{j},q_{\bar{\ell}}])+(a_{2}-a_{3})h_{1}^{m}([q_{\bar{\ell}},p_{i}])\\
&=(a_{2}-a_{3})\sum_{\substack{e_{1}^{m}\subset e_{2}^{m}=[p_{i},q_{j},q_{\bar{\ell}}]\\e_{2}^{m+1}=F_{k}e_{2}^{m}}}h_{1}^{m}(e_{1}^{m})\\
&=(a_{2}-a_{3})d_{1}^{m}h_{1}^{m}(e_{2}^{m})\qquad\qquad(\text{the definition of $d_{1}^{m}$})\\
&=0.\qquad\qquad (h_{1}^{m}\in\mathcal{H}_{\ell,1}^{n,m})
\end{aligned}
\end{equation*}
\end{proof}

\setlist[enumerate]{itemsep=0pt, topsep=0pt, partopsep=0pt}
\begin{prop}\label{prop:2}
For $h_{1}^{m}\in\mathcal{H}_{\ell,1}^{n,m},$ let $h_{1}^{m+1}$ be a $1$-form obtained by extending $h_{1}^{m}$ to the graph $G_{\ell}^{n,m+1}.$ For any triangle $e_{2}^{m+1}=[p_{i},p_{j},p_{\bar{\ell}}]\in E_{\ell,2}^{n,m+1},$ if $p_{i},p_{j},p_{\bar{\ell}}\in V_{\ell,\alpha}^{n,m+1},$ then one of the following six cases must hold:
\begin{enumerate}[label=(\roman*),itemsep=0pt, parsep=0pt]
\item one vertex belongs to $C_{\ell,\alpha,1}^{n,m+1},$ and the other two vertices belong to $C_{\ell,\alpha,2}^{n,m+1};$
\item one vertex belongs to $B_{\ell,\alpha,1}^{n,m+1},$ and the other two vertices belong to $B_{\ell,\alpha,2}^{n,m+1};$
\item one vertex belongs to $B_{\ell,\alpha,3,1}^{n,m+1},$ and the other two vertices belong to $B_{\ell,\alpha,3,2}^{n,m+1};$
\item two vertices belong to $C_{\ell,\alpha,2}^{n,m+1},$ and the other vertex belongs to $B_{\ell,\alpha,2}^{n,m+1};$
\item two vertices belong to $B_{\ell,\alpha,2}^{n,m+1},$ and the other vertex belongs to $B_{\ell,\alpha,3,2}^{n,m+1};$
\item three vertices belong to $B_{\ell,\alpha,3,2}^{n,m+1}.$
\end{enumerate}
If any of these cases holds, then
\begin{equation*}
d_{1}^{m+1}h_{1}^{m+1}(e_{2}^{m+1})=0.
\end{equation*}
\end{prop}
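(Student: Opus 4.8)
The plan is to prove the statement in two stages: first the purely combinatorial assertion that the triple $(p_i,p_j,p_{\bar\ell})$ must realize one of the six configurations (i)--(vi), and then, for each of these, the vanishing $d_1^{m+1}h_1^{m+1}(e_2^{m+1})=0$, which I would obtain by the same telescoping device used in the proof of Proposition \ref{prop:1}.

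For the combinatorial step I would localize the triangle. Since $p_i,p_j,p_{\bar\ell}\in V_{\ell,\alpha}^{n,m+1}$ all lie on the single triangle $e_2^{m+1}\in E_{\ell,2}^{n,m+1}$, Proposition \ref{1} shows that $e_2^{m+1}$ sits inside one of the copies of $E_{\ell-\alpha,0}^{\alpha-1,1}$ whose union is $V_{\ell,\alpha}^{n,m+1}$, so that it is a $2$-face of a single generation-$1$ cell $F_k$ of $SG_{\ell-\alpha}^{\alpha-1}$ (carried into $G_\ell^{n,m+1}$ by some $F_\omega$ with $|\omega|=m$). The stratum to which each of the three vertices belongs --- one of $C_{\ell,\alpha,1}^{n,m+1}$, $C_{\ell,\alpha,2}^{n,m+1}$, $B_{\ell,\alpha,1}^{n,m+1}$, $B_{\ell,\alpha,2}^{n,m+1}$, $B_{\ell,\alpha,3,1}^{n,m+1}$, $B_{\ell,\alpha,3,2}^{n,m+1}$ --- is determined by whether the cell $F_k$ sits at a corner, along an edge, or in the interior of $SG_{\ell-\alpha}^{\alpha-1}$, and by which of its own vertices is adjacent to the relevant boundary points. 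Comparing these cell positions with the defining unions of the $B$- and $C$-sets, one checks that two vertices sharing an edge of $e_2^{m+1}$ can differ by at most one ``level'' in this stratification; enumerating the admissible triples then yields precisely the list (i)--(vi), the split into the $C$- and $B$-branches reflecting whether $e_2^{m+1}$ lies near the outer boundary of its cell or is buried inside it.

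For the vanishing I would fix one of the six cases and expand, via \eqref{eq:3.28},
\begin{equation*}
d_1^{m+1}h_1^{m+1}(e_2^{m+1})=h_1^{m+1}([p_i,p_j])+h_1^{m+1}([p_j,p_{\bar\ell}])+h_1^{m+1}([p_{\bar\ell},p_i]).
\end{equation*}
Each edge joins two vertices whose strata are prescribed by the chosen case, so its $h_1^{m+1}$-value is given by exactly one of the edge identities \eqref{eq:3.19}--\eqref{eq:3.27}; for instance, in Case (i) the two edges incident to the $C_{\ell,\alpha,1}^{n,m+1}$-vertex are of $C_{\ell,\alpha,1}$--$C_{\ell,\alpha,2}$ type (formulas \eqref{eq:3.23}, \eqref{eq:3.21}) and the remaining edge is of $C_{\ell,\alpha,2}$--$C_{\ell,\alpha,2}$ type (formula \eqref{eq:3.27}), with the $B$-analogues handling Cases (ii)--(vi). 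Each of these expressions writes $h_1^{m+1}$ of an edge as a fixed linear combination of the values $h_1^m([q_i,q_{\tilde\ell}])$ (read inside the cell $F_\omega$), with coefficients assembled from the harmonic-extension constants $a_k,\bar a_k,\hat a_k,b_{k,\alpha},\bar b_{k,\alpha}$. Substituting and collecting terms, all the ``off-triangle'' contributions cancel and what remains is
\begin{equation*}
d_1^{m+1}h_1^{m+1}(e_2^{m+1})=c\bigl(h_1^m([q_{\bar i},q_{\bar j}])+h_1^m([q_{\bar j},q_{\bar\ell}])+h_1^m([q_{\bar\ell},q_{\bar i}])\bigr)=c\,d_1^m h_1^m(e_2^m),
\end{equation*}
for a scalar $c$ depending only on the case and a generation-$m$ triangle $e_2^m\in E_{\ell,2}^{n,m}$ associated with $e_2^{m+1}$, exactly as in the proof of Proposition \ref{prop:1}. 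Since $h_1^m\in\mathcal{H}_{\ell,1}^{n,m}$ forces $d_1^m h_1^m=0$, the proposition follows.

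The main obstacle is the bookkeeping rather than any single conceptual point: one must correctly pair each of the six stratum-profiles with the relevant subset of the edge identities \eqref{eq:3.19}--\eqref{eq:3.27}, keep track of which index in $\{\bar k_j\}$ plays the role of $i$ and which that of $\hat k$, and verify in every case that the ``cross'' coefficients genuinely cancel so that the telescoping to $c\,d_1^m h_1^m(e_2^m)$ goes through --- with extra care near the boundary of a cell, where (as with \eqref{eq:3.18}) the value of $\ell$ changes which connection pattern, hence which formula, applies.
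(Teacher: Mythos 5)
Your treatment of the homogeneous cases agrees with the paper: for (i)--(iii) the paper also expands \eqref{eq:3.28} using \eqref{eq:3.23} and \eqref{eq:3.27} and cancels exactly as in Proposition \ref{prop:1}, so your description there is essentially the paper's (your citation of \eqref{eq:3.21} for a $C_{\ell,\alpha,1}^{n,m+1}$--$C_{\ell,\alpha,2}^{n,m+1}$ edge is a slip, since \eqref{eq:3.21} governs edges from $C_{\ell,\alpha,2}^{n,m+1}$ to $B_{\ell,\alpha,1}^{n,m+1}$, but that is cosmetic). Your combinatorial step is material the paper does not prove at all; what you give is a plausible but genuinely sketchy outline, and you should be aware it would need a real enumeration to stand as a proof.

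The genuine gap is your uniform claim that in every case the three edge values collapse to $c\,d_{1}^{m}h_{1}^{m}(e_{2}^{m})$ for a single generation-$m$ triangle, so that harmonicity of $h_{1}^{m}$ finishes the argument. That is how Proposition \ref{prop:1} and cases (i)--(iii) work, but it is not what happens in the mixed cases (iv)--(vi), where the three vertices carry two different systems of harmonic-extension coefficients (e.g.\ the coefficients attached to $C_{\ell,\alpha,2}^{n,m+1}$ versus those attached to $B_{\ell,\alpha,2}^{n,m+1}$) and the relevant edge formulas are \eqref{eq:3.25} and \eqref{eq:3.27}. There the cross terms do not cancel pairwise into a triangle sum; the paper instead writes $h_{1}^{m}=d_{0}^{m}f_{0}^{m}$ (via \cite{Strichartz_2002}), collects the coefficient of each $f_{0}^{m}(q_{\tilde{\ell}})$, and kills the surviving coefficients of $f_{0}^{m}(q_{i})$ and $f_{0}^{m}(q_{j})$ using the normalization identity $\sum_{\tilde{\ell}\in\{\bar{k}_{j}\}}\bar{b}_{\tilde{\ell},\alpha}+(n+1-\alpha)\bar{b}_{\alpha}=1$; the vanishing in these cases is an identity in the extension coefficients and never invokes $d_{1}^{m}h_{1}^{m}=0$. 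Your proposal asserts precisely this cancellation (``all the off-triangle contributions cancel'') without verifying it and without introducing the sum-to-one identities or the representation $h_{1}^{m}=d_{0}^{m}f_{0}^{m}$ that make it work, yet this verification is the entire content of the proposition in cases (iv)--(vi); as written, the step from the expansion to $c\,d_{1}^{m}h_{1}^{m}(e_{2}^{m})$ is unsupported and, in the mixed cases, is not the mechanism by which the sum vanishes.
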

\begin{proof}
For Cases $(\textrm{i})-(\textrm{iii}),$ the proofs all employ equations $(\ref{eq:3.23})$ and $(\ref{eq:3.27})$ to extend the right-hand side of $(\ref{eq:3.28}),$ following the same method as in Proposition \ref{prop:1}. We omit the details.

As Cases $(\textrm{iv})-(\textrm{vi})$ follow similar arguments, we present only Case $(\textrm{iv}).$ The edges formed by vertices satisfy the conditions in Case $\textrm{III}.$ Therefore, by using equations $(\ref{eq:3.25})$ and $(\ref{eq:3.27}),$ we extend the right-hand side of $(\ref{eq:3.28})$ as follows:
\begin{align*}
&\qquad h_{1}^{m+1}([p_{i},p_{j}])+h_{1}^{m+1}([p_{j},p_{\bar{\ell}}])+h_{1}^{m+1}([p_{\bar{\ell}},p_{i}])\\
&=(\bar{b}_{i,\alpha}-\bar{b}_{j,\alpha})h_{1}^{m}([q_{i},q_{j}])+\sum_{\tilde{\ell}\neq j}r_{\tilde{\ell}}^{\alpha,6}h_{1}^{m}([q_{j},q_{\tilde{\ell}}])+\sum_{\tilde{\ell}\neq i}r_{\tilde{\ell}}^{\alpha,6}h_{1}^{m}([q_{\tilde{\ell}},q_{i}])\\
&=(\bar{b}_{i,\alpha}-\bar{b}_{j,\alpha})h_{1}^{m}([q_{i},q_{j}])
+\sum_{\tilde{\ell}\in\{\bar{k}_{j}\}\setminus\{i,j,\bar{\ell}\}}(\bar{b}_{\tilde{\ell},\alpha}-\bar{b}_{\tilde{\ell},\alpha})h_{1}^{m}([q_{j},q_{\tilde{\ell}}])
+(\bar{b}_{i,\alpha}-\bar{b}_{j,\alpha})h_{1}^{m}([q_{j},q_{i}])\\
&\quad+(\bar{b}_{\bar{\ell},\alpha}-\bar{b}_{\bar{\ell},\alpha})h_{1}^{m}([q_{j},q_{\bar{\ell}}])
+\sum_{\tilde{\ell}\notin\{\bar{k}_{j}\}}(\bar{b}_{\alpha}-\bar{b}_{\alpha})h_{1}^{m}([q_{j},q_{\tilde{\ell}}])
+\sum_{\tilde{\ell}\in\{\bar{k}_{j}\}\setminus\{i,j,\bar{\ell}\}}(\bar{b}_{\tilde{\ell},\alpha}-\bar{b}_{\tilde{\ell},\alpha})h_{1}^{m}([q_{\tilde{\ell}},q_{i}])\\
&\quad+(\bar{b}_{j,\alpha}-\bar{b}_{j,\alpha})h_{1}^{m}([q_{j},q_{i}])+(\bar{b}_{\bar{\ell},\alpha}-\bar{b}_{\bar{\ell},\alpha})h_{1}^{m}([q_{\bar{\ell}},q_{i}])
+\sum_{\tilde{\ell}\notin\{\bar{k}_{j}\}}(\bar{b}_{\alpha}-\bar{b}_{\alpha})h_{1}^{m}([q_{\tilde{\ell}},q_{i}])\\
&=(\bar{b}_{i,\alpha}+\bar{b}_{j,\alpha}-\bar{b}_{i,\alpha}-\bar{b}_{j,\alpha})h_{1}^{m}([q_{i},q_{j}])+(\bar{b}_{\bar{\ell},\alpha}-\bar{b}_{\bar{\ell},\alpha})h_{1}^{m}([q_{j},q_{\bar{\ell}}])
+(\bar{b}_{\bar{\ell},\alpha}-\bar{b}_{\bar{\ell},\alpha})h_{1}^{m}([q_{\bar{\ell}},q_{i}])\\
&\quad +\sum_{\tilde{\ell}\in\{\bar{k}_{j}\}\setminus\{i,j,\bar{\ell}\}}(\bar{b}_{\tilde{\ell},\alpha}-\bar{b}_{\tilde{\ell},\alpha})h_{1}^{m}([q_{j},q_{\tilde{\ell}}])
+\sum_{\tilde{\ell}\notin\{\bar{k}_{j}\}}(\bar{b}_{\alpha}-\bar{b}_{\alpha})h_{1}^{m}([q_{j},q_{\tilde{\ell}}])\\
&\quad +\sum_{\tilde{\ell}\in\{\bar{k}_{j}\}\setminus\{i,j,\bar{\ell}\}}(\bar{b}_{\tilde{\ell},\alpha}-\bar{b}_{\tilde{\ell},\alpha})h_{1}^{m}([q_{\tilde{\ell}},q_{i}])
+\sum_{\tilde{\ell}\notin\{\bar{k}_{j}\}}(\bar{b}_{\alpha}-\bar{b}_{\alpha})h_{1}^{m}([q_{\tilde{\ell}},q_{i}])\\
&=(\bar{b}_{i,\alpha}+\bar{b}_{j,\alpha}-\bar{b}_{i,\alpha}-\bar{b}_{j,\alpha})\Big(f_{0}^{m}(q_{j})-f_{0}^{m}(q_{i})\Big)+(\bar{b}_{\bar{\ell},\alpha}-\bar{b}_{\bar{\ell},\alpha})\Big(f_{0}^{m}(q_{\bar{\ell}})-f_{0}^{m}(q_{j})\Big)\\
&\quad +(\bar{b}_{\bar{\ell},\alpha}-\bar{b}_{\bar{\ell},\alpha})\Big(f_{0}^{m}(q_{i})-f_{0}^{m}(q_{\bar{\ell}})\Big)
+\sum_{\tilde{\ell}\in\{\bar{k}_{j}\}\setminus\{i,j,\bar{\ell}\}}(\bar{b}_{\tilde{\ell},\alpha}-\bar{b}_{\tilde{\ell},\alpha})\Big(f_{0}^{m}(q_{\tilde{\ell}})-f_{0}^{m}(q_{j})\Big)\\
&\quad +\sum_{\tilde{\ell}\notin\{\bar{k}_{j}\}}(\bar{b}_{\alpha}-\bar{b}_{\alpha})\Big(f_{0}^{m}(q_{\tilde{\ell}})-f_{0}^{m}(q_{j})\Big)
+\sum_{\tilde{\ell}\in\{\bar{k}_{j}\}\setminus\{i,j,\bar{\ell}\}}(\bar{b}_{\tilde{\ell},\alpha}-\bar{b}_{\tilde{\ell},\alpha})\Big(f_{0}^{m}(q_{i})-f_{0}^{m}(q_{\tilde{\ell}})\Big)\\
&\quad +\sum_{\tilde{\ell}\notin\{\bar{k}_{j}\}}(\bar{b}_{\alpha}-\bar{b}_{\alpha})\Big(f_{0}^{m}(q_{i})-f_{0}^{m}(q_{\tilde{\ell}})\Big)\\
&=\Big[\big(\bar{b}_{i,\alpha}+\bar{b}_{j,\alpha}-\bar{b}_{i,\alpha}-\bar{b}_{j,\alpha}\big)+(\bar{b}_{\bar{\ell},\alpha}-\bar{b}_{\bar{\ell},\alpha})
+\sum_{\tilde{\ell}\in\{\bar{k}_{j}\}\setminus\{i,j,\bar{\ell}\}}(\bar{b}_{\tilde{\ell},\alpha}-\bar{b}_{\tilde{\ell},\alpha})\\
&\qquad+(n+1-\alpha)(\bar{b}_{\alpha}-\bar{b}_{\alpha})\Big]f_{0}^{m}(q_{i})\\
&\quad+\Big[\big(\bar{b}_{i,\alpha}+\bar{b}_{j,\alpha}-\bar{b}_{i,\alpha}-\bar{b}_{j,\alpha}\big)-(\bar{b}_{\bar{\ell},\alpha}-\bar{b}_{\bar{\ell},\alpha})+\sum_{\tilde{\ell}\in\{\bar{k}_{j}\}\setminus\{i,j,\bar{\ell}\}}(\bar{b}_{\tilde{\ell},\alpha}-\bar{b}_{\tilde{\ell},\alpha})\\
&\qquad+(n+1-\alpha)(\bar{b}_{\alpha}-\bar{b}_{\alpha})\Big]f_{0}^{m}(q_{j})\\
&\quad +\Big((\bar{b}_{\bar{\ell},\alpha}-\bar{b}_{\bar{\ell},\alpha})-(\bar{b}_{\bar{\ell},\alpha}-\bar{b}_{\bar{\ell},\alpha})\Big)f_{0}^{m}(q_{\bar{\ell}})
+\sum_{\tilde{\ell}\in\{\bar{k}_{j}\}\setminus\{i,j,\bar{\ell}\}}\Big((\bar{b}_{\tilde{\ell},\alpha}-\bar{b}_{\tilde{\ell},\alpha})-(\bar{b}_{\tilde{\ell},\alpha}-\bar{b}_{\tilde{\ell},\alpha})\Big)f_{0}^{m}(q_{\tilde{\ell}})\\
&\quad +\sum_{\tilde{\ell}\notin\{\bar{k}_{j}\}}\Big((\bar{b}_{\alpha}-\bar{b}_{\alpha})-(\bar{b}_{\alpha}-\bar{b}_{\alpha})\Big)f_{0}^{m}(q_{\tilde{\ell}})\\
&=\Big[\Big(\sum_{\tilde{\ell}\in\{\bar{k}_{j}\}}\bar{b}_{\tilde{\ell},\alpha}
+(n+1-\alpha)\bar{b}_{\alpha}\Big)-\Big(\sum_{\tilde{\ell}\in\{\bar{k}_{j}\}}\bar{b}_{\tilde{\ell},\alpha}+(n+1-\alpha)\bar{b}_{\alpha}\Big)\Big]f_{0}^{m}(q_{i})\\
&\quad+\Big[\Big(\sum_{\tilde{\ell}\in\{\bar{k}_{j}\}}\bar{b}_{\tilde{\ell},\alpha}+(n+1-\alpha)\bar{b}_{\alpha}\Big) -\Big(\sum_{\tilde{\ell}\in\{\bar{k}_{j}\}}\bar{b}_{\tilde{\ell},\alpha}+(n+1-\alpha)\bar{b}_{\alpha}\Big)\Big]f_{0}^{m}(q_{j})\\
&=(1-1)f_{0}^{m}(q_{i})+(1-1)f_{0}^{m}(q_{j})\qquad\qquad\Big(\sum_{\tilde{\ell}\in\{\bar{k}_{j}\}}\bar{b}_{\tilde{\ell},\alpha}+(n+1-\alpha)\bar{b}_{\alpha}=1\Big)\\
&=0.
\end{align*}
\end{proof}

\begin{prop}\label{prop:3}
\setlist[enumerate]{itemsep=0pt, topsep=0pt, partopsep=0pt}
For any $h_{1}^{m}\in\mathcal{H}_{\ell,1}^{n,m},$ let $h_{1}^{m+1}$ be a $1$-form obtained by extending $h_{1}^{m}$ to the graph $G_{\ell}^{n,m+1}.$ For any triangle $e_{2}^{m+1}=[p_{i},p_{j},p_{\bar{\ell}}]\in E_{\ell,2}^{n,m+1},$ if $p_{i},p_{j}\in V_{\ell,\alpha}^{n,m+1},$ $p_{\bar{\ell}}\in V_{\ell,\alpha+1}^{n,m+1},$ then one of the following cases must hold:
\begin{enumerate}[label=(\roman*),itemsep=0pt, parsep=0pt]
\item one vertex belongs to $C_{\ell,\alpha,1}^{n,m+1},$ one vertex belongs to $C_{\ell,\alpha,2}^{n,m+1},$ and another vertex belongs to $C_{\ell,\alpha+1,1}^{n,m+1};$
\item two vertices belong to $C_{\ell,\alpha,2}^{n,m+1},$ and another vertex belongs to $C_{\ell,\alpha+1,1}^{n,m+1};$
\item three vertices satisfy the following cases:
  \begin{enumerate}[label=(\alph*),itemsep=0pt, parsep=0pt]
  \item one vertex belongs to $C_{\ell,\alpha,2}^{n,m+1},$ one vertex belongs to $B_{\ell,\alpha,1}^{n,m+1},$ and another vertex belongs to $C_{\ell,\alpha+1,2}^{n,m+1};$
  \item one vertex belongs to $B_{\ell,\alpha,2}^{n,m+1},$ one vertex belongs to $B_{\ell,\alpha,3,1}^{n,m+1},$ and another vertex belongs to $C_{\ell,\alpha+1,2}^{n,m+1};$
  \item one vertex belongs to $B_{\ell,\alpha,3,1}^{n,m+1},$ one vertex belongs to $B_{\ell,\alpha,3,2}^{n,m+1},$ and another vertex belongs to $C_{\ell,\alpha+1,2}^{n,m+1}.$
  \end{enumerate}
\item three vertices satisfy the following cases:
  \begin{enumerate}[label=(\alph*),itemsep=0pt, parsep=0pt]
  \item two vertices belong to $C_{\ell,\alpha,2}^{n,m+1},$ and another vertex belongs to $C_{\ell,\alpha+1,2}^{n,m+1};$
  \item two vertices belong to $B_{\ell,\alpha,2}^{n,m+1},$ and another vertex belongs to $C_{\ell,\alpha+1,2}^{n,m+1};$
  \item one vertex belongs to $B_{\ell,\alpha,2}^{n,m+1},$ one vertex belongs to $B_{\ell,\alpha,3,2}^{n,m+1},$ and another vertex belongs to $C_{\ell,\alpha+1,2}^{n,m+1};$
  \item two vertices belong to $B_{\ell,\alpha,3,2}^{n,m+1},$ and another vertex belongs to $C_{\ell,\alpha+1,2}^{n,m+1}.$
  \end{enumerate}
\end{enumerate}
If any of these cases holds, then
\begin{equation*}
d_{1}^{m+1}h_{1}^{m+1}(e_{2}^{m+1})=0.
\end{equation*}
\end{prop}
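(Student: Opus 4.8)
The plan is to follow the template of Propositions \ref{prop:1} and \ref{prop:2}: begin from the expansion \eqref{eq:3.28},
\[
d_1^{m+1}h_1^{m+1}(e_2^{m+1}) = h_1^{m+1}([p_i,p_j]) + h_1^{m+1}([p_j,p_{\bar\ell}]) + h_1^{m+1}([p_{\bar\ell},p_i]),
\]
substitute into each summand the appropriate edge-value formula from among \eqref{eq:3.19}--\eqref{eq:3.27}, and verify that the three contributions cancel. In every case listed, the type of each of the three vertices (which $C$- or $B$-piece of $V_{\ell,\alpha}^{n,m+1}$ or $V_{\ell,\alpha+1}^{n,m+1}$ it lies in) unambiguously selects which of \eqref{eq:3.19}--\eqref{eq:3.27} applies to each edge of the triangle; for instance, in Case (i) the edge between the $C_{\ell,\alpha,1}^{n,m+1}$-vertex and the $C_{\ell,\alpha,2}^{n,m+1}$-vertex is governed by \eqref{eq:3.19}, the edge joining either of them to the $C_{\ell,\alpha+1,1}^{n,m+1}$-vertex by \eqref{eq:3.20} and \eqref{eq:3.22}, and the computation then proceeds exactly as in Proposition \ref{prop:1}.

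First I would group the four cases according to which pair of the representation formulas \eqref{eq:3.7}--\eqref{eq:3.9} describes the three vertices: Cases (i)--(ii) are entirely of ``type \eqref{eq:3.7}'' with one vertex promoted to level $\alpha+1$; Case (iii) mixes \eqref{eq:3.7} with \eqref{eq:3.8}; Case (iv) is entirely of ``type \eqref{eq:3.8}'' with one vertex promoted. This grouping mirrors the split into Cases $\mathrm{I}$, $\mathrm{II}$, $\mathrm{III}$ preceding the proposition, so each of the three edge values can be quoted directly. Next, using $h_1^m = d_0^m f_0^m$ for some harmonic $f_0^m$ (valid by \cite[Theorem 2.7]{Strichartz_2002}), I would rewrite each $h_1^m([q_a,q_b])$ as $f_0^m(q_b)-f_0^m(q_a)$, turning the right-hand side of \eqref{eq:3.28} into an explicit linear combination $\sum_k c_k f_0^m(q_k)$ over the $n+1$ boundary vertices.

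It then remains to show $c_k=0$ for all $k$. For the vertices $q_i,q_j$ shared by the two level-$\alpha$ points, the coefficient collapses to a difference of two sums each equal to $1$, by the normalizations $b_{1,\alpha}+(\alpha-1)b_{2,\alpha}+(n+1-\alpha)b_{3,\alpha}=1$ and $\sum_{j\in\{\bar k_j\}}\bar b_{j,\alpha}+(n+1-\alpha)\bar b_\alpha=1$ together with their level-$(\alpha+1)$ analogues; the computation in Proposition \ref{prop:2}, Case (iv), is the prototype. For the extra index $\hat k$ carried by the level-$(\alpha+1)$ vertex $p_{\bar\ell}$, the relevant coefficient occurs with opposite signs on the two edges incident to $p_{\bar\ell}$ and cancels directly. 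For any remaining boundary vertex $q_{\tilde\ell}$, the coefficient is a telescoping difference of the form $(c-c')-(c-c')$ over the three edges and vanishes term by term. To keep the write-up short I would present Case (i) in full and note that Cases (ii)--(iv) follow by the identical mechanism, flagging only the handling of $\hat k$, which is the one place the argument differs from Proposition \ref{prop:2}.

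I expect the only real difficulty to be organizational: formulas \eqref{eq:3.19}--\eqref{eq:3.27} carry many subscripts ($\bar j$, $\bar k$, $\hat k$, $\{\bar k_j\}$), and one must align the index conventions across the two ``coordinate systems'' --- the index set attached to $p_i,p_j$ versus that attached to $p_{\bar\ell}$ --- so that the telescoping is literal rather than merely formal. Once the indices match up, each case reduces to the same two normalization identities, and, in contrast to Proposition \ref{prop:1}, the harmonicity relation $d_1^m h_1^m(e_2^m)=0$ is not needed here: the cancellation already takes place at the level of the $f_0^m$-coefficients.
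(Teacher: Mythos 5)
Your proposal matches the paper's proof: the paper likewise expands \eqref{eq:3.28} with the appropriate edge formulas (it cites \eqref{eq:3.20}, \eqref{eq:3.22} and \eqref{eq:3.23}), follows the prototype computation of Case (iv) of Proposition \ref{prop:2}, and closes with exactly the two normalization identities you name, $\sum_{\tilde{\ell}\in\{\bar{k}_{j}\}}\bar{b}_{\tilde{\ell},\alpha}+(n+1-\alpha)\bar{b}_{\alpha}=1$ and $b_{1,\alpha+1}+\alpha b_{2,\alpha+1}+(n-\alpha)b_{3,\alpha+1}=1,$ so the cancellation indeed happens at the level of the $f_{0}^{m}$-coefficients without invoking $d_{1}^{m}h_{1}^{m}=0.$ One bookkeeping correction: in Case (i) the edge joining the $C_{\ell,\alpha,1}^{n,m+1}$-vertex to the $C_{\ell,\alpha,2}^{n,m+1}$-vertex has one endpoint of type \eqref{eq:3.7} and one of type \eqref{eq:3.8}, so it is governed by \eqref{eq:3.23} (Case II(ii)(a)), not by \eqref{eq:3.19}, which requires both endpoints to be of type \eqref{eq:3.7}; with that substitution your scheme goes through as described.
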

\begin{proof}
In the proofs of Cases $(\textrm{i})-(\textrm{iv}),$ we follow the same approach as in Case $(\textrm{iv})$ of Proposition \ref{prop:2}. We employ equations $(\ref{eq:3.20})$ $(\ref{eq:3.22})$ and $(\ref{eq:3.23})$ to extend the right-hand of $(\ref{eq:3.28}).$ For the third-to-last equation, we utilize the conditions
$$\sum_{\tilde{\ell}\in \{\bar{k}_{j}\}}\bar{b}_{\tilde{\ell},\alpha}+(n+1-\alpha)\bar{b}_{\alpha}=1$$
and
$$b_{1,\alpha+1}+\alpha b_{2,\alpha+1}+(n-\alpha)b_{3,\alpha+1}=1$$
to obtain the result.
\end{proof}

In the above propositions, we discuss the $1$-form $h_{1}^{m+1}$ on the triangles obtained by extending the harmonic $1$-form $h_{1}^{m}.$ Now, we turn our attention to $\delta_{1}^{m+1}h_{1}^{m+1}$ on the vertices. By the definition of the dual de Rham derivative $\delta_{1}^{m},$ we have
\begin{equation}\label{eq:3.29}
\begin{aligned}
\delta_{1}^{m+1}h_{1}^{m+1}(e_{0}^{m+1})&=\sum_{e_{1}^{m+1}\in E_{\ell,1}^{n,m+1}}\frac{\mu_{1}(e_{1}^{m+1})}{\mu_{0}(e_{0}^{m+1})}{\rm sgn}(e_{0}^{m+1},e_{1}^{m+1})h_{1}^{m+1}(e_{1}^{m+1})\\
&=\sum_{e_{1}^{m+1}\in E_{\ell,1}^{n,m+1}}\frac{\mu_{1}(e_{1}^{m+1})}{\mu_{0}(e_{0}^{m+1})}\times(-1)\times h_{1}^{m+1}(e_{1}^{m+1}),
\end{aligned}
\end{equation}
where the last equality holds because $e_{0}^{m+1}$ is the starting vertex of $e_{1}^{m+1},$ and thus $\rm{sgn} = -1$ by Definition \ref{def:2.1}. Next, we focus on the right-hand side of $(\ref{eq:3.29}).$

\begin{prop}\label{prop:4}
For any $h_{1}^{m}\in\mathcal{H}_{\ell,1}^{n,m},$ let $h_{1}^{m+1}$ be a $1$-form obtained by extending $h_{1}^{m}$ to the graph $G_{\ell}^{n,m+1}.$ For any vertex $e_{0}^{m+1}\in E_{\ell,0}^{n,m+1}\cap E_{\ell,0}^{n,m},$ we have
$$\delta_{1}^{m+1}h_{1}^{m+1}(e_{0}^{m+1})=0.$$
\end{prop}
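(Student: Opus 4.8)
\emph{Plan.} The idea is to expand the sum in \eqref{eq:3.29} over the edges of $G_{\ell}^{n,m+1}$ incident to $e_{0}^{m+1}$, group it according to the generation-$m$ cells of $SG_{\ell}^{n}$ meeting at $e_{0}^{m+1}$, evaluate each group by the harmonic-extension formula \eqref{eq:3.10}, and recognize the outcome as a fixed multiple of $\delta_{1}^{m}h_{1}^{m}(e_{0}^{m})$, which vanishes because $h_{1}^{m}\in\mathcal{H}_{\ell,1}^{n,m}$.

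\emph{Step 1: localize to cells.} Every edge of $G_{\ell}^{n,m+1}$ lies inside a unique generation-$m$ cell $F_{\omega}(SG_{\ell}^{n})$ with $|\omega|=m$, and since $e_{0}^{m+1}\in E_{\ell,0}^{n,m}$ it is one of the corners $F_{\omega}q_{i(\omega)}$ of each such cell. Within that cell $F_{\omega}q_{i(\omega)}$ lies in only the corner $1$-cell fixing it, so it meets exactly the $n$ edges joining it to the near-corner vertices $F_{\omega}F_{k}q_{j}\in V_{\ell,2}^{n,m+1}$, $j\neq i(\omega)$, where $F_{k}$ is the similitude fixing $q_{i(\omega)}$; this exhausts the edges occurring in \eqref{eq:3.29} with no omission or repetition. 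Since the harmonic extension is carried out cell by cell, \eqref{eq:3.10} holds after conjugation by $F_{\omega}$, giving for each such edge
\[
h_{1}^{m+1}\big([F_{\omega}q_{i(\omega)},\,F_{\omega}F_{k}q_{j}]\big)=a_{2}\,h_{1}^{m}\big(F_{\omega}[q_{i(\omega)},q_{j}]\big)+a_{3}\!\!\sum_{\tilde{\ell}\neq i(\omega),j}\!\!h_{1}^{m}\big(F_{\omega}[q_{i(\omega)},q_{\tilde{\ell}}]\big).
\]

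\emph{Step 2: collapse each cell's contribution.} By the permutation symmetry of the construction $F_{i}(x)=x/\ell+b_{i,\ell,n}$ and of the weight system (which is forced by the mean-value form \eqref{eq:3.1} of the harmonic extension), all $n$ of these edges carry the same weight $b_{1}^{\omega}b_{1}^{k}\mu_{1}^{0}$, where $\mu_{1}^{0}$ is the common weight of an edge of $G_{\ell}^{n,0}$ and $b_{1}^{k}$ is the common value of $b_{1}^{j}$ over the corner similitudes. Pulling this factor out, summing the last display over $j\neq i(\omega)$, using $\sum_{j\neq i}\sum_{\tilde{\ell}\neq i,j}h_{1}^{m}(F_{\omega}[q_{i},q_{\tilde{\ell}}])=(n-1)\sum_{\tilde{\ell}\neq i}h_{1}^{m}(F_{\omega}[q_{i},q_{\tilde{\ell}}])$ and $a_{1}+a_{2}+(n-1)a_{3}=1$, the total contribution of cell $\omega$ to \eqref{eq:3.29} becomes
\[
-\,\frac{(1-a_{1})\,b_{1}^{k}\,\mu_{1}^{0}}{\mu_{0}(e_{0}^{m+1})}\;b_{1}^{\omega}\!\!\sum_{j\neq i(\omega)}\!\!h_{1}^{m}\big(F_{\omega}[q_{i(\omega)},q_{j}]\big).
\]

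\emph{Step 3: identify the total with $\delta_{1}^{m}h_{1}^{m}$.} Summing Step 2 over all generation-$m$ cells $\omega$ at $e_{0}^{m+1}$ produces a fixed nonzero multiple of $\sum_{\omega}b_{1}^{\omega}\sum_{j\neq i(\omega)}h_{1}^{m}(F_{\omega}[q_{i(\omega)},q_{j}])$. On $G_{\ell}^{n,m}$ the edges incident to $e_{0}^{m}$ are precisely the $F_{\omega}[q_{i(\omega)},q_{j}]$, each of weight $b_{1}^{\omega}\mu_{1}^{0}$, so by \eqref{eq:2.6} this sum equals $-\mu_{0}(e_{0}^{m})\,\delta_{1}^{m}h_{1}^{m}(e_{0}^{m})/\mu_{1}^{0}$. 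Since $h_{1}^{m}$ is a harmonic $1$-form, $\delta_{1}^{m}h_{1}^{m}\equiv 0$, in particular at $e_{0}^{m}$; hence the contributions of all cells sum to $0$ and $\delta_{1}^{m+1}h_{1}^{m+1}(e_{0}^{m+1})=0$.

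\emph{Main obstacle.} The conceptual content is short; the effort lies in the weight bookkeeping of Steps 1--2 --- verifying that exactly $n$ edges occur in each cell, that these $n$ weights coincide and the cell-wise prefactors match across generations $m$ and $m+1$ (which relies on the corner-permutation symmetry of $SG_{\ell}^{n}$ and of the chosen weights), and that the identity $a_{1}+a_{2}+(n-1)a_{3}=1$ is used so that the sum telescopes exactly to $\delta_{1}^{m}h_{1}^{m}(e_{0}^{m})$. The stratification of $e_{0}^{m+1}$ among the sets $V_{\ell,k}^{n,m}$ needs no separate treatment, as Step 3 merely sums over all cells meeting $e_{0}^{m+1}$.
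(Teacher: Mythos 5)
Your argument is correct and follows essentially the same route as the paper's proof: expand $\delta_{1}^{m+1}h_{1}^{m+1}$ at the old vertex via the harmonic-extension formula \eqref{eq:3.10}, collapse the $n$ incident edges per cell using $a_{1}+a_{2}+(n-1)a_{3}=1$ (i.e.\ the coefficient $a_{2}+(n-1)a_{3}$), and identify the total with a multiple of $\delta_{1}^{m}h_{1}^{m}(e_{0}^{m})=0$. Your version is merely more explicit about summing over all generation-$m$ cells meeting the vertex and about the weight factors $b_{1}^{\omega}$, which the paper's proof handles implicitly under the same symmetric-weight assumption.
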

\begin{proof}
For any $e_{0}^{m+1}\in E_{\ell,0}^{n,m+1}\cap E_{\ell,0}^{n,m},$ any vertex connected to $e_{0}^{m+1}$ belongs to $V_{\ell,2}^{n,m+1}.$ According to equation $(\ref{eq:3.10}),$ the right-hand side of $(\ref{eq:3.29})$ can be evaluated as follows:
\begin{equation}\label{eq:3.43}
\delta_{1}^{m+1}h_{1}^{m+1}(e_{0}^{m+1})=\sum_{e_{1}^{m+1}\in E_{\ell,1}^{n,m+1}}\frac{\mu_{1}(e_{1}^{m+1})}{\mu_{0}(e_{0}^{m+1})}\times(-1)\times\Big(\sum_{\tilde{\ell}\neq i}\tilde{a}_{\tilde{\ell}}h_{1}^{m}([q_{i},q_{\tilde{\ell}}])\Big).
\end{equation}

To expand the summation on the right-hand side of $(\ref{eq:3.43}),$ we proceed as follows:
\begin{equation*}
\sum_{\tilde{\ell}\neq i}\tilde{a}_{\tilde{\ell}}h_{1}^{m}([q_{i},q_{\tilde{\ell}}])+\sum_{\tilde{\ell}\neq i}\tilde{a}_{\tilde{\ell}}h_{1}^{m}([q_{i},q_{\tilde{\ell}}])+\cdots+\sum_{\tilde{\ell}\neq i}\tilde{a}_{\tilde{\ell}}h_{1}^{m}([q_{i},q_{\tilde{\ell}}])
\end{equation*}
There are a total of $n$ terms, and the coefficients in front of each edge are unequal. The above expression can be written as
\begin{align*}
&\quad a_{2}h_{1}^{m}([q_{i},q_{0}])+\sum_{\tilde{\ell}\neq 0,i}a_{3}h_{1}^{m}([q_{i},q_{\tilde{\ell}}])+a_{2}h_{1}^{m}([q_{i},q_{1}])+\sum_{\tilde{\ell}\neq 1,i}a_{3}h_{1}^{m}([q_{i},q_{\tilde{\ell}}])+\cdots\\
&\quad+a_{2}h_{1}^{m}([q_{i},q_{i-1}])+\sum_{\tilde{\ell}\neq i-1,i}a_{3}h_{1}^{m}([q_{i},q_{\tilde{\ell}}])+a_{2}h_{1}^{m}([q_{i},q_{i+1}])+\sum_{\tilde{\ell}\neq i,i+1}a_{3}h_{1}^{m}([q_{i},q_{\tilde{\ell}}])\\
&\quad +\cdots+a_{2}h_{1}^{m}([q_{i},q_{n}])+\sum_{\tilde{\ell}\neq i,n}a_{3}h_{1}^{m}([q_{i},q_{\tilde{\ell}}])\\
&=\Big(a_{2}+(n-1)a_{3}\Big)h_{1}^{m}([q_{i},q_{0}])+\Big(a_{2}+(n-1)a_{3}\Big)h_{1}^{m}([q_{i},q_{1}])+\cdots\\
&\quad+\Big(a_{2}+(n-1)a_{3}\Big)h_{1}^{m}([q_{i},q_{i-1}])+\Big(a_{2}+(n-1)a_{3}\Big)h_{1}^{m}([q_{i},q_{i+1}])\\
&\quad+\cdots+\Big(a_{2}+(n-1)a_{3}\Big)h_{1}^{m}([q_{i},q_{n}])\\
&=\Big(a_{2}+(n-1)a_{3}\Big)\sum_{\tilde{\ell}\neq i}h_{1}^{m}([q_{i},q_{\tilde{\ell}}]).
\end{align*}
Substitute the above equality into $(\ref{eq:3.29}),$ we have
\begin{align*}
\delta_{1}^{m+1}h_{1}^{m+1}(e_{0}^{m+1})&=-\frac{\mu_{1}(e_{1}^{m+1})}{\mu_{0}(e_{0}^{m+1})}\Big(a_{2}+(n-1)a_{3}\Big)\sum_{\tilde{\ell}\neq i}h_{1}^{m}([q_{i},q_{\tilde{\ell}}])\\
&=-\Big(\frac{\mu_{1}(e_{1}^{0})}{\mu_{0}(e_{0}^{0})}\Big)^{m+1}\Big(a_{2}+(n-1)a_{3}\Big)\sum_{\tilde{\ell}\neq i}h_{1}^{m}([q_{i},q_{\tilde{\ell}}])\\
&=\frac{\mu_{1}(e_{1}^{0})}{\mu_{0}(e_{0}^{0})}\Big(a_{2}+(n-1)a_{3}\Big)\sum_{e_{1}^{m}\in E_{\ell,1}^{n,m}}\frac{\mu_{1}(e_{1}^{m})}{\mu_{0}(e_{0}^{m})}{\rm sgn}(e_{0}^{m},e_{1}^{m})h_{1}^{m}(e_{1}^{m})\\
&=\frac{\mu_{1}(e_{1}^{0})}{\mu_{0}(e_{0}^{0})}\Big(a_{2}+(n-1)a_{3}\Big)\delta_{1}^{m}h_{1}^{m}(e_{0}^{m})\quad(\text{by the definition of $\delta_{1}^{m}$})\\
&=0.\quad(h_{1}^{m}\in\mathcal{H}_{\ell,1}^{n,m})
\end{align*}
\end{proof}

\begin{prop}\label{prop:5}
\setlist[enumerate]{itemsep=0pt, topsep=0pt, partopsep=0pt}
For any $h_{1}^{m}\in\mathcal{H}_{\ell,1}^{n,m},$ let $h_{1}^{m+1}$ be a $1$-form obtained by extending $h_{1}^{m}$ to the graph $G_{\ell}^{n,m+1}.$ For any vertex $e_{0}^{m+1}\in V_{\ell,2}^{n,m+1},$ we consider the following cases based on its connections:
\begin{enumerate}[label=(\roman*),itemsep=0pt, parsep=0pt]
\item $e_{0}^{m+1}$ is connected to at least one vertex in $V_{0};$
\item $e_{0}^{m+1}$ is not connected to any vertex in $V_{0}.$
\end{enumerate}
If either of the cases holds, then
$$\delta_{1}^{m+1}h_{1}^{m+1}(e_{0}^{m+1})=0.$$
\end{prop}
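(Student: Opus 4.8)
The plan is to follow the pattern of Proposition \ref{prop:4}, with the structural simplification that, since $e_0^{m+1}\in V_{\ell,2}^{n,m+1}$ is a \emph{new} vertex of $G_\ell^{n,m+1}$, the vanishing will be forced by the normalization identities of the harmonic extension coefficients rather than by the harmonicity of $h_1^m$. I would begin by recalling that $h_1^m=d_0^m f_0^m$ for a harmonic $0$-form $f_0^m$ and that $h_1^{m+1}=d_0^{m+1}f_0^{m+1}$, where $f_0^{m+1}$ is the harmonic extension of $f_0^m$ to $G_\ell^{n,m+1}$. Since $e_0^{m+1}$ lies in exactly two $n$-simplices $e_n^{m+1}\in E_{\ell,n}^{n,m+1}$, the edges of $E_{\ell,1}^{n,m+1}$ incident to it are exactly those joining it to the other vertices of those two simplices; each such edge has the form $F_\omega e_1^0$ with $|\omega|=m+1$, so, just as in Proposition \ref{prop:4}, the weight factors $\mu_1(e_1^{m+1})/\mu_0(e_0^{m+1})$ appearing in $(\ref{eq:3.29})$ combine according to $(\ref{eq:2.8})$ and can be pulled out of the sum.

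Next I would split according to the two cases of the statement. In Case $(\textrm{i})$, where $e_0^{m+1}$ is adjacent to a vertex of $V_0$, the edges incident to $e_0^{m+1}$ are precisely of the types covered by $(\ref{eq:3.10})$--$(\ref{eq:3.13})$; in Case $(\textrm{ii})$, where $e_0^{m+1}$ is adjacent to no vertex of $V_0$, they are of the types covered by $(\ref{eq:3.14})$--$(\ref{eq:3.18})$. In each case I would substitute the relevant expressions into the right-hand side of $(\ref{eq:3.29})$, write $h_1^m([q_a,q_b])=f_0^m(q_b)-f_0^m(q_a)$, and collect the coefficient of each $f_0^m(q_a)$; proceeding exactly as in the concluding display of the proof of Proposition \ref{prop:2}, each such coefficient is an alternating sum of extension coefficients that telescopes to a multiple of one of the normalization relations $a_1+a_2+(n-1)a_3=1$, $\bar a_1+\bar a_2+(n-1)\bar a_3=1$, $2\hat a_1+(n-1)\hat a_2=1$, $b_{1,3}+2b_{2,3}+(n-2)b_{3,3}=1$, or $\sum_{j}\bar b_{j,3}+(n-2)\bar b_3=1$, and hence vanishes. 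Equivalently, and more conceptually, $\delta_1^{m+1}h_1^{m+1}(e_0^{m+1})=\delta_1^{m+1}d_0^{m+1}f_0^{m+1}(e_0^{m+1})=-\Delta_0^{m+1}f_0^{m+1}(e_0^{m+1})$, and since $f_0^{m+1}$ is produced by the harmonic extension algorithm it obeys the mean-value identity $(\ref{eq:3.1})$ at $e_0^{m+1}$ in both cases, so with the incident edge weights equal the right-hand side of $(\ref{eq:3.29})$ is zero. I would point out that, unlike in Propositions \ref{prop:1} and \ref{prop:4}, the harmonicity of $h_1^m$ plays no role here.

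The hard part will be purely organisational: listing the edges incident to $e_0^{m+1}$ and matching each to its governing formula among $(\ref{eq:3.10})$--$(\ref{eq:3.18})$, while keeping track of the various sub-cases (an endpoint being a midpoint of a $0$-cell edge or not, the split $\ell=4$ versus $\ell>4$, membership of an endpoint in $C_{\ell,\alpha,1}^{n,m+1}$ versus $C_{\ell,\alpha+1,1}^{n,m+1}$, and so on) together with their coefficients. Once this bookkeeping is set up, each cancellation is routine, and no idea beyond those already used in Propositions \ref{prop:1}, \ref{prop:2} and \ref{prop:4} is needed.
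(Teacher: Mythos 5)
Your ``more conceptual'' argument is correct and is in fact a different (and cleaner) route than the paper's. The paper proves the proposition by expanding the right-hand side of $(\ref{eq:3.29})$ edge by edge via $(\ref{eq:3.10})$--$(\ref{eq:3.16})$, rewriting everything in terms of $f_{0}^{m}(q_{a})$, and then checking that each collected coefficient vanishes because it is an entry of a row of the matrix $AC$ (equivalently, because the extension coefficients solve the linear system $AX=B$); in the subcase $\ell=2$ it instead invokes $\delta_{1}^{m}h_{1}^{m}(q_{i})=\delta_{1}^{m}h_{1}^{m}(q_{j})=0$. Your observation that $h_{1}^{m+1}=d_{0}^{m+1}f_{0}^{m+1}$ with $f_{0}^{m+1}$ the harmonic extension, so that $\delta_{1}^{m+1}h_{1}^{m+1}(e_{0}^{m+1})=-\Delta_{0}^{m+1}f_{0}^{m+1}(e_{0}^{m+1})$ and the mean-value identity $(\ref{eq:3.1})$ holds at every new vertex by construction (all neighbours of a new vertex lie in the same $m$-cell, and the weight ratio is pulled out exactly as the paper does), gives the vanishing in both cases at once, and indeed would also dispose of Propositions \ref{prop:6}--\ref{prop:11} uniformly. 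That is a genuine simplification relative to the paper's case analysis.

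Two cautions. First, in your computational variant the stated source of the cancellation is wrong: the normalization relations $a_{1}+a_{2}+(n-1)a_{3}=1$, $\bar a_{1}+\bar a_{2}+(n-1)\bar a_{3}=1$, etc.\ only say that each extension rule has row sum $1$ (constants extend to constants); they do not by themselves kill the collected coefficients. What is needed are the cross-relations mixing coefficients of different vertex types, e.g.\ $a_{1}+\bar a_{2}+(n-1)b_{1,3}-(n+1)\bar a_{1}=0$ and $\bar a_{1}+(n-2)b_{1,3}-(n+1)a_{1}=-1$, which the paper extracts from the rows of $AC$ and which are exactly the mean-value equations at $e_{0}^{m+1}$ --- i.e.\ the content of your conceptual argument, not of the row-sum identities. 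If you run the coefficient computation, you must cite those relations (or the system $AX=B$), not the normalizations. Second, the remark that ``the harmonicity of $h_{1}^{m}$ plays no role'' is slightly overstated: the vanishing at new vertices indeed does not use $\delta_{1}^{m}h_{1}^{m}=0$ (the paper's $\ell=2$ subcase uses it, but needn't), yet the cell-local potential representation $h_{1}^{m}=d_{0}^{m}f_{0}^{m}$, on which the whole extension scheme $(\ref{eq:3.10})$--$(\ref{eq:3.18})$ rests, already uses the closedness $d_{1}^{m}h_{1}^{m}=0$ on each cell, so harmonicity enters at the level of defining $h_{1}^{m+1}$.
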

\begin{proof}
Let $y$ be a vertex connected to $e_{0}^{m+1}.$ We first prove Case $(\textrm{i}).$ By assumption, the vertices connected to $e_{0}^{m+1}$ belong to $V_{0}\cup V_{\ell,2}^{n,m+1}\cup V_{\ell,3}^{n,m+1}.$ Note that when $\ell\geq 3,$ $e_{0}^{m+1}$ is connected to the vertices in $V_{\ell,3}^{n,m+1}.$ Therefore, we divide the proof into following two parts.

\noindent\textit{Part $1.$} Let $\ell=2.$ The number of vertices in $V_{0}$ connected to $e_{0}^{m+1}$ is $2,$ and the number of vertices in $V_{\ell,2}^{n,m+1}$ connected to $e_{0}^{m+1}$ is $2(n-1).$ Moreover, when $\ell=2,$ $a_{1}=a_{2}.$

By using equations $(\ref{eq:3.10})$ and $(\ref{eq:3.11}),$ we expand the right-hand side of $(\ref{eq:3.29})$ as follows:
\begin{align*}
&\qquad\sum_{y\in V_{0}}h_{1}^{m+1}(e_{1}^{m+1})+\sum_{y\in V_{\ell,2}^{n,m+1}}h_{1}^{m+1}(e_{1}^{m+1})\\
&=\sum_{y\in V_{0}}\sum_{\tilde{\ell}\neq i}\tilde{a}_{\tilde{\ell}}h_{1}^{m}([q_{\tilde{\ell}},q_{i}])+\sum_{y\in V_{\ell,2}^{n,m+1}}(a_{2}-a_{3})h_{1}^{m}([q_{j},q_{k}])\\
&=a_{2}h_{1}^{m}([q_{j},q_{i}])+\sum_{\tilde{\ell}\neq i,j}a_{3}h_{1}^{m}([q_{\tilde{\ell}},q_{i}])+a_{1}h_{1}^{m}([q_{i},q_{j}])+\sum_{\tilde{\ell}\neq i,j}a_{3}h_{1}^{m}([q_{\tilde{\ell}},q_{j}])\\
&\quad+\sum_{\tilde{\ell}\neq i,j}(a_{2}-a_{3})h_{1}^{m}([q_{j},q_{\tilde{\ell}}])+\sum_{\tilde{\ell}\neq i,j}(a_{1}-a_{3})h_{1}^{m}([q_{i},q_{\tilde{\ell}}])\\
&=a_{2}h_{1}^{m}([q_{j},q_{i}])+\sum_{\tilde{\ell}\neq i}a_{3}h_{1}^{m}([q_{\tilde{\ell}},q_{i}])-a_{3}h_{1}^{m}([q_{j},q_{i}])+a_{1}h_{1}^{m}([q_{i},q_{j}])\\
&\quad+\sum_{\tilde{\ell}\neq j}a_{3}h_{1}^{m}([q_{\tilde{\ell}},q_{j}])-a_{3}h_{1}^{m}([q_{i},q_{j}])+\sum_{\tilde{\ell}\neq j}(a_{2}-a_{3})h_{1}^{m}([q_{j},q_{\tilde{\ell}}])-(a_{2}-a_{3})h_{1}^{m}([q_{j},q_{i}])\\
&\quad+\sum_{\tilde{\ell}\neq i}(a_{1}-a_{3})h_{1}^{m}([q_{i},q_{\tilde{\ell}}])-(a_{1}-a_{3})h_{1}^{m}([q_{i},q_{j}])\\
&=\sum_{\tilde{\ell}\neq i}a_{3}h_{1}^{m}([q_{\tilde{\ell}},q_{i}])+\sum_{\tilde{\ell}\neq j}a_{3}h_{1}^{m}([q_{\tilde{\ell}},q_{j}])+\sum_{\tilde{\ell}\neq j}(a_{2}-a_{3})h_{1}^{m}([q_{j},q_{\tilde{\ell}}])+\sum_{\tilde{\ell}\neq i}(a_{1}-a_{3})h_{1}^{m}([q_{i},q_{\tilde{\ell}}]).
\end{align*}
Substitute the above equality into $(\ref{eq:3.29}),$ we have
\begin{align*}
\delta_{1}^{m+1}h_{1}^{m+1}(e_{0}^{m+1})&=-\frac{\mu_{1}(e_{1}^{m+1})}{\mu_{0}(e_{0}^{m+1})}\Big(\sum_{\tilde{\ell}\neq i}a_{3}h_{1}^{m}([q_{\tilde{\ell}},q_{i}])+\sum_{\tilde{\ell}\neq j}a_{3}h_{1}^{m}([q_{\tilde{\ell}},q_{j}])\\
&\quad+\sum_{\tilde{\ell}\neq j}(a_{2}-a_{3})h_{1}^{m}([q_{j},q_{\tilde{\ell}}])+\sum_{\tilde{\ell}\neq i}(a_{1}-a_{3})h_{1}^{m}([q_{i},q_{\tilde{\ell}}])\Big)\\
&=-\Big(\frac{\mu_{1}(e_{1}^{0})}{\mu_{0}(e_{0}^{0})}\Big)^{m+1}\Big(\sum_{\tilde{\ell}\neq i}a_{3}h_{1}^{m}([q_{\tilde{\ell}},q_{i}])+\sum_{\tilde{\ell}\neq j}a_{3}h_{1}^{m}([q_{\tilde{\ell}},q_{j}])\\
&\quad+\sum_{\tilde{\ell}\neq j}(a_{2}-a_{3})h_{1}^{m}([q_{j},q_{\tilde{\ell}}])+\sum_{\tilde{\ell}\neq i}(a_{1}-a_{3})h_{1}^{m}([q_{i},q_{\tilde{\ell}}])\Big)\\
&=\frac{\mu_{1}(e_{1}^{0})}{\mu_{0}(e_{0}^{0})}\Big(-a_{3}\delta_{1}^{m}h_{1}^{m}(q_{i})-a_{3}\delta_{1}^{m}h_{1}^{m}(q_{j})\\
&\qquad+(a_{2}-a_{3})\delta_{1}^{m}h_{1}^{m}(q_{j})
+(a_{1}-a_{3})\delta_{1}^{m}h_{1}^{m}(q_{i})\Big)\quad(\text{by the definition of $\delta_{1}^{m}$})\\
&=0.\quad(h_{1}^{m}\in\mathcal{H}_{\ell,1}^{n,m})
\end{align*}

\noindent\textit{Part $2.$} Let $\ell\geq 3.$ We have $a_{1}\neq a_{2}.$ In this case, the number of vertices in $V_{0}$ connected to $e_{0}^{m+1}$ is $1,$ the number of vertices in $V_{\ell,2}^{n,m+1}$ connected to $e_{0}^{m+1}$ is $n,$ and the number of vertices in $V_{\ell,3}^{n,m+1}$ connected to $e_{0}^{m+1}$ is $(n-1).$

According to equations $(\ref{eq:3.10}),(\ref{eq:3.11}),(\ref{eq:3.12})$ and $(\ref{eq:3.13}),$ we extend the right-hand side of $(\ref{eq:3.29})$ as follows:
\begin{align*}
&\qquad\sum_{y\in V_{0}}h_{1}^{m}(e_{1}^{m+1})+\sum_{y\in V_{\ell,2}^{n,m+1}}h_{1}^{m}(e_{1}^{m+1})+\sum_{y\in V_{\ell,3}^{n,m+1}}h_{1}^{m}(e_{1}^{m+1})\\
&=\sum_{\tilde{\ell}\neq i}\tilde{a}_{\tilde{\ell}}h_{1}^{m}([q_{\tilde{\ell}},q_{i}])+\sum_{\substack{y\in V_{\ell,2}^{n,m+1}\\y\sim z,z\in V_{0}}}(a_{2}-a_{3})h_{1}^{m}([q_{j},q_{k}])
+\sum_{\tilde{\ell}\neq i}\tilde{a}_{\tilde{\ell}}^{1}h_{1}^{m}([q_{i},q_{\tilde{\ell}}])\\
&\quad+\sum_{y\in V_{\ell,3}^{n,m+1}}\sum_{\tilde{\ell}\neq i}\tilde{a}_{\tilde{\ell}}^{2}h_{1}^{m}([q_{i},q_{\tilde{\ell}}])\\
&=a_{2}h_{1}^{m}([q_{j},q_{i}])+\sum_{\tilde{\ell}\neq i,j}a_{3}h_{1}^{m}([q_{\tilde{\ell}},q_{i}])+\sum_{\tilde{\ell}\neq i,j}(a_{2}-a_{3})h_{1}^{m}([q_{j},q_{\tilde{\ell}}])+(\bar{a}_{2}-a_{2})h_{1}^{m}([q_{i},q_{j}])\\
&\quad+\sum_{\tilde{\ell}\neq i,j}(\bar{a}_{3}-a_{3})h_{1}^{m}([q_{i},q_{\tilde{\ell}}])+\sum_{\bar{\ell}\neq i,j}\Big[(b_{2,3}-a_{2})h_{1}^{m}([q_{i},q_{j}])+(b_{2,3}-a_{3})h_{1}^{m}([q_{i},q_{\bar{\ell}}])\\
&\quad+\sum_{\tilde{\ell}\neq i,j,\bar{\ell}}(b_{3,3}-a_{3})h_{1}^{m}([q_{i},q_{\tilde{\ell}}])\Big]\\
&=a_{2}\Big(f_{0}^{m}(q_{i})-f_{0}^{m}(q_{j})\Big)+\sum_{\tilde{\ell}\neq i,j}a_{3}\Big(f_{0}^{m}(q_{i})-f_{0}^{m}(q_{\tilde{\ell}})\Big)+\sum_{\tilde{\ell}\neq i,j}(a_{2}-a_{3})\Big(f_{0}^{m}(q_{\tilde{\ell}})-f_{0}^{m}(q_{j})\Big)\\
&\quad+(\bar{a}_{2}-a_{2})\Big(f_{0}^{m}(q_{j})-f_{0}^{m}(q_{i})\Big)+\sum_{\tilde{\ell}\neq i,j}(\bar{a}_{3}-a_{3})\Big(f_{0}^{m}(q_{\tilde{\ell}})-f_{0}^{m}(q_{i})\Big)\\
&\quad+\sum_{\bar{\ell}\neq i,j}\Big[(b_{2,3}-a_{2})\Big(f_{0}^{m}(q_{j})-f_{0}^{m}(q_{i})\Big)+(b_{2,3}-a_{3})\Big(f_{0}^{m}(q_{\bar{\ell}})-f_{0}^{m}(q_{i})\Big)\\
&\qquad+\sum_{\tilde{\ell}\neq i,j,\bar{\ell}}(b_{3,3}-a_{3})\Big(f_{0}^{m}(q_{\tilde{\ell}})-f_{0}^{m}(q_{i})\Big)\Big]\\
&=\Big[a_{2}+(n-1)a_{3}-(\bar{a}_{2}-a_{2})-(n-1)(\bar{a}_{3}-a_{3})+(n-1)\Big(-(b_{2,3}-a_{2})\\
&\qquad-(b_{2,3}-a_{3})-(n-2)(b_{3,3}-a_{3})\Big)\Big]f_{0}^{m}(q_{i})\\
&\quad+\Big[-a_{2}-(n-1)(a_{2}-a_{3})+(\bar{a}_{2}-a_{2})+(n-1)(b_{2,3}-a_{2})\Big]f_{0}^{m}(q_{j})\\
&\quad+\sum_{\tilde{\ell}\neq i,j}\Big[-a_{3}+(a_{2}-a_{3})+(\bar{a}_{3}-a_{3})+(b_{2,3}-a_{3})+(n-2)(b_{3,3}-a_{3})\Big]f_{0}^{m}(q_{\tilde{\ell}})\\
&=\Big(\bar{a}_{1}+(n-1)b_{1,3}+1-(n+1)a_{1}\Big)f_{0}^{m}(q_{i})
+\Big(\bar{a}_{2}+(n-1)a_{3}+(n-1)b_{2,3}\\
&\quad-2na_{2}\Big)f_{0}^{m}(q_{j})+\sum_{\tilde{\ell}\neq i,j}\Big(a_{2}+\bar{a}_{3}+b_{2,3}+(n-2)b_{3,3}-(n+2)a_{3}\Big)f_{0}^{m}(q_{\tilde{\ell}}).\\
\end{align*}

From the solution of the linear system of equations $AX = B,$ we know that the value at any vertex in $E_{\ell,0}^{n,1}\setminus V_{0}$ can be written as
$$h(e_{0}^{m}) = \sum_{i=0}^{n} s_{i} h(q_{i}),$$
where $s_{i}$ are coefficients determined by the matrix $C.$ The solution $X$ can therefore be represented as $X = CB_{1},$ where $B_{1}=(h(q_{0}),h(q_{1}),...,h(q_{n})).$ The coefficients in front of $h(q_{\tilde{\ell}}),$ excluding the vertex $q_{i},$ in the solution correspond to a certain row of $-AC.$ The coefficient in front of $f_{0}^{m}(q_{i})$ involves only the sum of $a_{1},\bar{a}_{1}$ and $b_{1,3},$ which is the element of $-AC.$ By construction, this row is equal to the corresponding row of $-C_{1},$ where $C_{1}$ is a diagonal matrix with:
\begin{equation*}
(C_{1})_{ii}=\begin{cases}
1, &\quad\quad\text{if the vertex $q_{i}$ is connected to any vertex in $V_{0},$}\\
0, &\quad\quad\text{otherwise.}
\end{cases}
\end{equation*}

Since this vertex is connected to the vertex in $V_{0},$ thus $(C_{1})_{ii}=1,$ namely,
$$\bar{a}_{1}+(n-2)b_{1,3}-(n+1)a_{1}=-1.$$
Therefore, we have
$$\bar{a}_{1}+(n-1)b_{1,3}+1-(n+1)a_{1}=-1+1=0,$$
$$\bar{a}_{2}+(n-1)a_{3}+(n-1)b_{2,3}-2na_{2}=0,$$
and
$$a_{2}+\bar{a}_{3}+b_{2,3}+(n-2)b_{3,3}-(n+2)a_{3}=0.$$
Therefore, we obtain
$$\delta_{1}^{m+1}h_{1}^{m+1}(e_{0}^{m+1})=0.$$

Next, we prove Case $(\textrm{ii}).$ We divide the proof into three subcases:

\noindent\textit{Case $1.$} One vertex in $V_{\ell,2}^{n,m+1}$ connected to $e_{0}^{m+1}$ is not connected to any vertex in $V_{0},$ while the other vertex in $V_{\ell,2}^{n,m+1}$ connected to $e_{0}^{m+1}$ is connected to some vertex in $V_{0}.$

In Case $1,$ we only consider the case $\ell$ is odd. By using equations $(\ref{eq:3.12}),(\ref{eq:3.14})$ and $(\ref{eq:3.16}),$ we can extend the right-hand side of $(\ref{eq:3.29})$ as follows:
\begin{equation}\label{eq:3.42}
\begin{aligned}
&\qquad\sum_{y\in V_{\ell,2}^{n,m+1}}h_{1}^{m+1}(e_{1}^{m+1})+\sum_{y\in V_{\ell,3}^{n,m+1}}h_{1}^{m+1}(e_{1}^{m+1})\\
&=\sum_{\tilde{\ell}\neq i}\tilde{a}_{\tilde{\ell}}^{1}h_{1}^{m}([q_{\tilde{\ell}},q_{i}])+(\bar{a}_{1}-\bar{a}_{2})h_{1}^{m}([q_{i},q_{j}])
+\sum_{y\in V_{\ell,3}^{n,m+1}}\sum_{\tilde{\ell}\neq i}\tilde{a}_{\tilde{\ell}}^{5}h_{1}^{m}([q_{\tilde{\ell}},q_{i}])\\
&=\Big(a_{1}+\bar{a}_{2}+(n-1)b_{1,3}-(n+1)\bar{a}_{1}\Big)f_{0}^{m}(q_{i})+\Big(\bar{a}_{1}+a_{2}+(n-1)b_{2,3}\\
&\quad-(n+1)\bar{a}_{2}\Big)f_{0}^{m}(q_{j})+\sum_{\tilde{\ell}\neq i,j}\Big(a_{3}+b_{2,3}+(n-2)b_{3,3}-n\bar{a}_{3}\Big)f_{0}^{m}(q_{\tilde{\ell}}).
\end{aligned}
\end{equation}

By analyzing the proof of Part $2$ of Case $(\textrm{i}),$ we observe that each coefficient preceding $f_{0}^{m}(q_{\tilde{\ell}})$ in the above formula corresponds to an entry in a row of the matrix $AC.$ Since $e_{0}^{m+1}$ is not connected to any vertex in $V_{0},$ it follows that
$$a_{1}+\bar{a}_{2}+(n-1)b_{1,3}-(n+1)\bar{a}_{1}=0,$$
$$\bar{a}_{1}+a_{2}+(n-1)b_{2,3}-(n+1)\bar{a}_{2}=0,$$
and
$$a_{3}+b_{2,3}+(n-2)b_{3,3}-n\bar{a}_{3}=0.$$
Therefore, the last expression of (\ref{eq:3.42}) equals zero.

\noindent\textit{Case $2.$} Neither of the vertices in $V_{\ell,2}^{n,m+1}$ connected to $e_{0}^{m+1}$ is not connected to any vertex in $V_{0}.$

\noindent\textit{Case $3.$} $\ell$ is even, and $e_{0}^{m+1}$ is the midpoint of the edge $[q_{i},q_{j}].$

Since the proofs of the remaining cases follow the same approach, we omit the details.
\end{proof}

\begin{prop}\label{prop:6}
For any $h_{1}^{m}\in\mathcal{H}_{\ell,1}^{n,m},$ let $h_{1}^{m+1}$ be a $1$-form obtained by extending $h_{1}^{m}$ to the graph $G_{\ell}^{n,m+1}.$ For any vertex $e_{0}^{m+1}\in V_{\ell,\alpha}^{n,m+1},$ if $e_{0}^{m+1}\in C_{\ell,\alpha,1}^{n,m+1},$ then
$$\delta_{1}^{m+1}h_{1}^{m+1}(e_{0}^{m+1})=0.$$
\end{prop}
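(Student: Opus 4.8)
The plan is to follow the scheme of Part~2 of Case~$(\textrm{i})$ of Proposition~\ref{prop:5}, with the level-$2$/level-$3$ data there replaced by the level-$\alpha$ and level-$(\alpha+1)$ harmonic-extension formulas $(\ref{eq:3.7})$--$(\ref{eq:3.8})$ and the edge formulas $(\ref{eq:3.19})$--$(\ref{eq:3.24})$. First I would fix $e_{0}^{m+1}\in C_{\ell,\alpha,1}^{n,m+1}$ and read off its adjacencies: by the construction of the sets $C_{\ell,\alpha,k}^{n,m+1}$ the vertex $e_{0}^{m+1}$ is a corner of a sub-copy of $G_{\ell-\alpha}^{\alpha-1,1}$, is joined to exactly one boundary vertex $q_{i}\in V_{0}$, and each of its remaining neighbours lies in one of $C_{\ell,\alpha,1}^{n,m+1}$, $C_{\ell,\alpha,2}^{n,m+1}$, $C_{\ell,\alpha+1,1}^{n,m+1}$, $C_{\ell,\alpha+1,2}^{n,m+1}$, $B_{\ell,\alpha,1}^{n,m+1}$, $B_{\ell,\alpha,3,1}^{n,m+1}$. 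Producing this list carefully, and matching each neighbour type to the corresponding one among the edge formulas $(\ref{eq:3.19})$--$(\ref{eq:3.24})$, is the first substantive step.

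Next I would expand the right-hand side of $(\ref{eq:3.29})$. For the radial edge $[q_{i},e_{0}^{m+1}]$ I would use the level-$\alpha$ analogue of $(\ref{eq:3.10})$, which follows from $(\ref{eq:3.7})$ together with $b_{1,\alpha}+(\alpha-1)b_{2,\alpha}+(n+1-\alpha)b_{3,\alpha}=1$, so that $h_{1}^{m+1}([q_{i},e_{0}^{m+1}])=b_{2,\alpha}\sum_{\bar{k}}h_{1}^{m}([q_{i},q_{\bar{k}}])+b_{3,\alpha}\sum_{\tilde{\ell}}h_{1}^{m}([q_{i},q_{\tilde{\ell}}])$; for every remaining incident edge I would substitute the matching formula among $(\ref{eq:3.19})$--$(\ref{eq:3.24})$. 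Writing $h_{1}^{m}=d_{0}^{m}f_{0}^{m}$ with $f_{0}^{m}$ harmonic, by \cite[Theorem~2.7]{Strichartz_2002}, turns every summand into $f_{0}^{m}(q_{b})-f_{0}^{m}(q_{a})$, so the whole expression can be regrouped as $\sum_{\tilde{\ell}=0}^{n}c_{\tilde{\ell}}\,f_{0}^{m}(q_{\tilde{\ell}})$.

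The third step is to show $c_{\tilde{\ell}}=0$ for every $\tilde{\ell}$, exactly as in Proposition~\ref{prop:5}. For $\tilde{\ell}\neq i$ the coefficient $c_{\tilde{\ell}}$ is the corresponding off-diagonal entry of $-AC$, hence of the diagonal matrix $-C_{1}$, hence $0$; for $\tilde{\ell}=i$ the coefficient is the $i$-th diagonal entry of $-AC$ plus the constant $1$ produced by the normalizations in $(\ref{eq:3.7})$ and $(\ref{eq:3.8})$, and since $e_{0}^{m+1}\in C_{\ell,\alpha,1}^{n,m+1}$ is joined to $q_{i}\in V_{0}$ we have $(C_{1})_{ii}=1$, so $c_{i}=-1+1=0$. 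Pulling out the common factor $-\mu_{1}(e_{1}^{m+1})/\mu_{0}(e_{0}^{m+1})$ from $(\ref{eq:3.29})$, which by $(\ref{eq:2.8})$ is a nonzero constant once the junction weight of $e_{0}^{m+1}$ is accounted for, then yields $\delta_{1}^{m+1}h_{1}^{m+1}(e_{0}^{m+1})=0$.

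I expect the main obstacle to be the middle step: correctly enumerating all neighbour types of a $C_{\ell,\alpha,1}^{n,m+1}$-vertex---in particular the adjacency to $C_{\ell,\alpha+1,1}^{n,m+1}$ governed by $(\ref{eq:3.20})$ and the mixed $(\ref{eq:3.7})$--$(\ref{eq:3.8})$ adjacencies governed by $(\ref{eq:3.22})$ and $(\ref{eq:3.24})$---and then carrying out the coefficient bookkeeping precisely enough that the groups telescope exactly to a single row of $-C_{1}$. A secondary point requiring care is the weight $\mu_{0}(e_{0}^{m+1})$ at the junction vertex $e_{0}^{m+1}$, which is a sum over the cells meeting there rather than a single power of $\mu_{0}^{0}$, so the factorization in $(\ref{eq:3.29})$ needs a short separate justification.
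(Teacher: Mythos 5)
Your overall template (expand $(\ref{eq:3.29})$, write $h_{1}^{m}=d_{0}^{m}f_{0}^{m}$, regroup as $\sum_{\tilde{\ell}}c_{\tilde{\ell}}f_{0}^{m}(q_{\tilde{\ell}})$, and identify the coefficients with a row of $AC$ versus $C_{1}$) is the right one, but the adjacency analysis on which you build it is wrong, and the error reaches the key cancellation. A vertex $e_{0}^{m+1}\in C_{\ell,\alpha,1}^{n,m+1}$ (here $\alpha\geq 3$) is a junction of $\alpha$ cells of generation $m+1$ and is \emph{not} adjacent to any boundary vertex $q_{i}\in V_{0}$; vertices adjacent to $V_{0}$ lie in $V_{\ell,2}^{n,m+1}$, which is the situation of Case (i) of Proposition \ref{prop:5}, not of this proposition. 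The dominant coefficient $b_{1,\alpha}$ attached to $h(q_{i})$ in $(\ref{eq:3.7})$ records which cell corner the point is associated with under the harmonic extension, not a graph edge, so there is no ``radial edge $[q_{i},e_{0}^{m+1}]$'' and no level-$\alpha$ analogue of $(\ref{eq:3.10})$ to insert. Consequently your closing step $c_{i}=-1+1=0$, which needs $(C_{1})_{ii}=1$ together with the $+1$ contributed by that nonexistent edge, is unsupported. The mechanism the paper uses (and explicitly points to) is that of Case (ii) of Proposition \ref{prop:5}: because $e_{0}^{m+1}$ is \emph{not} connected to $V_{0}$, the relevant row of $AC$ coincides with the corresponding zero row of $C_{1}$, so every coefficient, including $c_{i}$, vanishes outright, with no correction term.

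The second defect is the neighbour census itself. What distinguishes $C_{\ell,\alpha,1}^{n,m+1}$ from the $B$-sets is precisely adjacency to the level-$(\alpha-1)$ junctions: the paper counts $\alpha n$ neighbours in total, of which $(\alpha-1)^{2}$ lie in $V_{\ell,\alpha-1}^{n,m+1}$, $(\alpha-1)(n+2-\alpha)$ in $V_{\ell,\alpha}^{n,m+1}$, and $(n+1-\alpha)$ in $V_{\ell,\alpha+1}^{n,m+1}$. Your list contains no $V_{\ell,\alpha-1}^{n,m+1}$-neighbours at all (and replaces them with a single $V_{0}$-neighbour plus $B$-type adjacencies). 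The edges joining $e_{0}^{m+1}$ to its $V_{\ell,\alpha-1}^{n,m+1}$-neighbours contribute genuine terms to $(\ref{eq:3.29})$ --- they are the formulas $(\ref{eq:3.20})$, $(\ref{eq:3.22})$, $(\ref{eq:3.23})$--$(\ref{eq:3.24})$ read at level $\alpha-1$ with $e_{0}^{m+1}$ as terminal point --- and omitting them means the assembled coefficients are not the full row of $AC$, so the telescoping you aim for does not take place. With the corrected neighbour count and the equations $(\ref{eq:3.19})$, $(\ref{eq:3.20})$, $(\ref{eq:3.22})$, $(\ref{eq:3.23})$, the computation closes exactly as in Case (ii) of Proposition \ref{prop:5}, which is how the paper argues.
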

\begin{proof}
Let $y$ be a vertex connected to $e_{0}^{m+1}.$ By assumption, there are $\alpha n$ vertices connected to $e_{0}^{m+1},$ and these vertices belong to $V_{\ell,\alpha-1}^{n,m+1}\cup V_{\ell,\alpha}^{n,m+1}\cup V_{\ell,\alpha+1}^{n,m+1}.$ Precisely, the number of vertices in $V_{\ell,\alpha-1}^{n,m+1}$ connected to $e_{0}^{m+1}$ is $(\alpha-1)^{2},$ the number of vertices in $V_{\ell,\alpha}^{n,m+1}$ connected to $e_{0}^{m+1}$ is $(\alpha-1)(n+2-\alpha),$ and the number of vertices in $V_{\ell,\alpha+1}^{n,m+1}$ connected to $e_{0}^{m+1}$ is $(n+1-\alpha).$

By using equations $(3,19),(\ref{eq:3.20}),(\ref{eq:3.22})$ and $(\ref{eq:3.23}),$ we extend the right-hand side of $(\ref{eq:3.29})$ as the approach in Case $(\textrm{ii})$ in Proposition \ref{prop:5}.
\end{proof}

\begin{prop}\label{prop:7}
\setlist[enumerate]{itemsep=0pt, topsep=0pt, partopsep=0pt}
For any $h_{1}^{m}\in\mathcal{H}_{\ell,1}^{n,m},$ let $h_{1}^{m+1}$ be a $1$-form obtained by extending $h_{1}^{m}$ to the graph $G_{\ell}^{n,m+1}.$ For any vertex $e_{0}^{m+1}\in C_{\ell,\alpha,2}^{n,m+1},$ we consider the following cases based on its connections:
\begin{enumerate}[label=(\roman*),itemsep=0pt, parsep=0pt]
\item $e_{0}^{m+1}$ is connected to some vertex in $B_{\ell,\alpha,1}^{n,m+1}$ but is not connected to any vertex in $B_{\ell,\alpha,2}^{n,m+1};$
\item $e_{0}^{m+1}$ is connected to some vertex in $B_{\ell,\alpha,1}^{n,m+1}\cup B_{\ell,\alpha,2}^{n,m+1};$
\item $e_{0}^{m+1}$ is connected to some vertex in $B_{\ell,\alpha,2}^{n,m+1}$ but is not connected to any vertex in $B_{\ell,\alpha,1}^{n,m+1}.$
\end{enumerate}
If any of these cases holds, then
$$\delta_{1}^{m+1}h_{1}^{m+1}(e_{0}^{m+1})=0.$$
\end{prop}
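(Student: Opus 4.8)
The plan is to follow the same strategy used in Case~$(\textrm{ii})$ of Proposition~\ref{prop:5} and in Proposition~\ref{prop:6}, reducing the computation to the linear algebra of the harmonic extension matrix. Since $h_1^m\in\mathcal{H}_{\ell,1}^{n,m}$, by \cite[Theorem 2.7]{Strichartz_2002} we may write $h_1^m=d_0^m f_0^m$ for a harmonic $0$-form $f_0^m$, so that $h_1^m([p,q])=f_0^m(q)-f_0^m(p)$ for every edge. The first step is to enumerate, in each of the three cases, the vertices $y$ adjacent to $e_0^{m+1}$ and to sort them according to whether the pair $[e_0^{m+1},y]$ falls under the edge-value formulas in Case~$\textrm{II}$ or Case~$\textrm{III}$ of Section~\ref{S:3}. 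Concretely, in Case~$(\textrm{i})$ the incident edges are described by $(\ref{eq:3.23})$ together with $(\ref{eq:3.24})$, in Case~$(\textrm{iii})$ by $(\ref{eq:3.25})$--$(\ref{eq:3.27})$, and in the mixed Case~$(\textrm{ii})$ by a combination of $(\ref{eq:3.21})$ and $(\ref{eq:3.23})$--$(\ref{eq:3.27})$; the number of neighbours of each type is dictated by Proposition~\ref{1} and the local structure of $SG_\ell^n$.

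Having made this classification, I would substitute the chosen formulas into the right-hand side of $(\ref{eq:3.29})$, expand each $h_1^m([q_s,q_t])$ as $f_0^m(q_t)-f_0^m(q_s)$, and regroup the resulting sum so that every boundary value $f_0^m(q_{\tilde\ell})$ carries a single coefficient. As in Proposition~\ref{prop:5}, the common factor $\mu_1(e_1^{m+1})/\mu_0(e_0^{m+1})=(\mu_1(e_1^0)/\mu_0(e_0^0))^{m+1}$ can be pulled out, so it suffices to show that each of these coefficients vanishes.

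The key observation --- exactly the one exploited at the end of the proof of Proposition~\ref{prop:5}, Case~$(\textrm{i})$, Part~$2$ --- is that, because the harmonic extension values $h(e_0^1)$ are produced by $X=CB_1$ with $AC=C_1$, the vector of coefficients of $(f_0^m(q_0),\dots,f_0^m(q_n))$ obtained from this regrouping is precisely the row of $-AC=-C_1$ indexed by $e_0^{m+1}$. Since $C_1$ is the diagonal matrix whose $(i,i)$ entry is $1$ when $q_i$ is adjacent to a point of $V_0$ and $0$ otherwise, and since $e_0^{m+1}\in C_{\ell,\alpha,2}^{n,m+1}$ is by construction a type-$2$ vertex of a subgraph $G_{\ell-\alpha}^{\alpha-1,1}$ and hence is \emph{not} adjacent to any vertex of $V_0$, the corresponding row is identically zero. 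Therefore every coefficient of $f_0^m(q_{\tilde\ell})$ vanishes and $\delta_1^{m+1}h_1^{m+1}(e_0^{m+1})=0$; alternatively, the off-diagonal structure of $-C_1$ lets one recognise the regrouped expression as a multiple of $\delta_1^m h_1^m(e_0^m)$, which is $0$ because $h_1^m$ is harmonic.

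The main obstacle is purely combinatorial bookkeeping: in each of the three cases one must correctly identify the adjacency pattern of $e_0^{m+1}$, match each incident edge to the right formula among $(\ref{eq:3.21})$--$(\ref{eq:3.27})$ (keeping track of which neighbour plays the role of the distinguished index $i$, which lie in $\{\bar k_j\}$, and which is the incremental index $\hat k$), and verify that after regrouping the coefficients genuinely assemble into a single row of $-C_1$ rather than a more intricate combination. Case~$(\textrm{ii})$, where $e_0^{m+1}$ sees neighbours of both types $B_{\ell,\alpha,1}^{n,m+1}$ and $B_{\ell,\alpha,2}^{n,m+1}$, demands the most care, since it mixes the Case~$\textrm{II}$ and Case~$\textrm{III}$ edge formulas; once the accounting is done, the cancellation is forced by the same normalisation identities $\sum_{\tilde\ell\in\{\bar k_j\}}\bar b_{\tilde\ell,\alpha}+(n+1-\alpha)\bar b_\alpha=1$ and its $\alpha+1$ analogue that closed Propositions~\ref{prop:2} and~\ref{prop:3}.
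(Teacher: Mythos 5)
Your proposal is correct and takes essentially the same approach as the paper: you classify the neighbours of $e_{0}^{m+1}$, insert the relevant edge formulas among (\ref{eq:3.21})--(\ref{eq:3.27}) into (\ref{eq:3.29}), expand via $h_{1}^{m}=d_{0}^{m}f_{0}^{m}$, and close the computation exactly as in Case (ii) of Proposition \ref{prop:5}, i.e.\ the regrouped coefficient vector is a row of $AC$ and vanishes because a vertex of $C_{\ell,\alpha,2}^{n,m+1}$ is not adjacent to $V_{0}$. The only divergence is immaterial bookkeeping over which formulas are cited in cases (i) and (ii) (the paper uses (\ref{eq:3.22})--(\ref{eq:3.27}) and (\ref{eq:3.21}), (\ref{eq:3.24})--(\ref{eq:3.27}), respectively), which does not affect the cancellation mechanism.
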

\begin{proof}
The vertices connected to $e_{0}^{m+1}$ belong to $V_{\ell,\alpha-1}^{n,m+1}\cup V_{\ell,\alpha}^{n,m+1}\cup V_{\ell,\alpha+1}^{n,m+1}.$  In Case $(\textrm{i}),$ we use equations $(\ref{eq:3.22})-(\ref{eq:3.27});$ in Case $(\textrm{ii}),$ we employ equations $(\ref{eq:3.21}),$ and $(\ref{eq:3.24})-(\ref{eq:3.27});$ for Case $(\textrm{iii}),$ we utilize equations $(\ref{eq:3.25})-(\ref{eq:3.27}).$ We can use the method in the proof of Case $(\textrm{ii})$ in Proposition \ref{prop:5} to complete the subsequent calculations.
\end{proof}

\begin{prop}\label{prop:8}
For any $h_{1}^{m}\in\mathcal{H}_{\ell,1}^{n,m},$ let $h_{1}^{m+1}$ be a $1$-form obtained by extending $h_{1}^{m}$ onto the graph $G_{\ell}^{n,m+1}.$ If $e_{0}^{m+1}\in B_{\ell,\alpha,1}^{n,m+1},$ then
$$\delta_{1}^{m+1}h_{1}^{m+1}(e_{0}^{m+1})=0.$$
\end{prop}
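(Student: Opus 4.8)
The plan is to follow the same strategy as in Propositions \ref{prop:5}--\ref{prop:7}: starting from the expression \eqref{eq:3.29} for $\delta_1^{m+1}h_1^{m+1}(e_0^{m+1})$, I would substitute the explicit edge formulas of Section \ref{S:3}, rewrite everything in terms of the values $f_0^m(q_{\tilde\ell})$ of the underlying harmonic $0$-form (using $h_1^m=d_0^mf_0^m$), collect the coefficient of each $f_0^m(q_{\tilde\ell})$, and then identify the resulting coefficient vector with a row of the matrix $-AC$, so that the hypothesis $e_0^{m+1}\in B_{\ell,\alpha,1}^{n,m+1}$ forces every coefficient to be zero.

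First I would pin down the local structure at $e_0^{m+1}$. Since $B_{\ell,\alpha,1}^{n,m+1}=\bigcup_{\hat k}V_{\ell-2\alpha,1}^{\alpha-1,1}$, the vertex $e_0^{m+1}$ is a ``type-$1$'' vertex inside a copy of $G_{\ell-2\alpha}^{\alpha-1,1}$; hence its neighbours all lie in $V_{\ell,\alpha-1}^{n,m+1}\cup V_{\ell,\alpha}^{n,m+1}\cup V_{\ell,\alpha+1}^{n,m+1}$, and in particular $e_0^{m+1}$ is adjacent to no point of $V_0$. I would record how many neighbours fall into each of these three sets (the combinatorics mirror the counts $(\alpha-1)^2$, $(\alpha-1)(n+2-\alpha)$, $(n+1-\alpha)$ from Proposition \ref{prop:6}, adjusted for the deeper position in the $B$-decomposition), and observe that every edge incident to $e_0^{m+1}$ is of one of the types already analysed, so that $h_1^{m+1}$ on it is given by \eqref{eq:3.19}, \eqref{eq:3.20}, \eqref{eq:3.22}, \eqref{eq:3.23}, together with the analogous formulas among \eqref{eq:3.24}--\eqref{eq:3.27} for the edges whose other endpoint lies in a $C_{\ell,\cdot,2}$- or $B_{\ell,\cdot,2}$-type set. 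Plugging these into \eqref{eq:3.29} and using $h_1^m([q_a,q_b])=f_0^m(q_b)-f_0^m(q_a)$, each summand becomes a difference of $f_0^m$-values with an explicit coefficient assembled from the harmonic-extension constants $b_{k,\alpha}$, $b_{k,\alpha+1}$, $\bar b_{k,\alpha}$, $\bar b_{k,\alpha+1}$.

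Then I would collect, for each $\tilde\ell\in\{0,1,\dots,n\}$, the total coefficient of $f_0^m(q_{\tilde\ell})$, using the normalizations $b_{1,\alpha}+(\alpha-1)b_{2,\alpha}+(n+1-\alpha)b_{3,\alpha}=1$, $b_{1,\alpha+1}+\alpha b_{2,\alpha+1}+(n-\alpha)b_{3,\alpha+1}=1$ and $\sum_{\tilde\ell\in\{\bar k_j\}}\bar b_{\tilde\ell,\alpha}+(n+1-\alpha)\bar b_\alpha=1$ (exactly as in Propositions \ref{prop:2} and \ref{prop:3}) to simplify the diagonal contributions. As in Part $2$ of the proof of Proposition \ref{prop:5}, this coefficient vector is precisely a row of $-AC$, where $A$ is the irreducible weakly diagonally dominant matrix of the harmonic-extension system $AX=B$; by the identity established there, that row of $-AC$ coincides with the corresponding row of $-C_1$, with $C_1$ the diagonal $0$--$1$ matrix whose $ii$-entry is $1$ iff $q_i$ is adjacent to a vertex of $V_0$. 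Since $e_0^{m+1}\in B_{\ell,\alpha,1}^{n,m+1}$ is not connected to any vertex of $V_0$, the relevant diagonal entry of $C_1$ is $0$, so all coefficients of the $f_0^m(q_{\tilde\ell})$ vanish and $\delta_1^{m+1}h_1^{m+1}(e_0^{m+1})=0$.

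The main obstacle will be the bookkeeping in the middle step: correctly matching each edge incident to a $B_{\ell,\alpha,1}$-vertex with its case among \eqref{eq:3.19}--\eqref{eq:3.27}, tracking which endpoint carries the larger harmonic coefficient and the shift index $\hat k$, so that the accumulated coefficients line up with the prescribed row of $-AC$. Once this classification is fixed, the vanishing is a formal consequence of the $-AC=-C_1$ identity, just as in the cases already treated, and I would present the argument by reduction to Proposition \ref{prop:5}, omitting the routine algebra.
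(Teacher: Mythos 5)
Your proposal follows essentially the same route as the paper: expand \eqref{eq:3.29} using the explicit edge formulas, rewrite everything in terms of the values $f_0^m(q_{\tilde\ell})$ via $h_1^m=d_0^mf_0^m$, and identify the resulting coefficient vector with a row of $-AC$, which coincides with the corresponding row of $-C_1$ and vanishes because a $B_{\ell,\alpha,1}^{n,m+1}$-vertex is not adjacent to $V_0$ (the paper does exactly this expansion and then reduces to the method of Proposition \ref{prop:5}). One bookkeeping correction: by the definition of $B_{\ell,\alpha}^{n,m+1}$ such a vertex has no neighbours in $V_{\ell,\alpha-1}^{n,m+1}$ at all — its $\alpha n$ neighbours lie in $C_{\ell,\alpha,2}^{n,m+1}\cup B_{\ell,\alpha,2}^{n,m+1}\cup C_{\ell,\alpha+1,2}^{n,m+1}$, with $\alpha(\alpha-1)$ of them in $V_{\ell,\alpha}^{n,m+1}$ and $\alpha(n+1-\alpha)$ in $V_{\ell,\alpha+1}^{n,m+1}$ — so the relevant edge formulas are \eqref{eq:3.21}, \eqref{eq:3.23} and \eqref{eq:3.24} rather than \eqref{eq:3.19}, \eqref{eq:3.20} or \eqref{eq:3.22}.
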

\begin{proof}
Let $y$ be a vertex connected to $e_{0}^{m+1}.$ By assumption, there are $\alpha n$ vertices connected to $e_{0}^{m+1},$ and they belong to $ V_{\ell,\alpha}^{n,m+1}\cup V_{\ell,\alpha+1}^{n,m+1}.$ Precisely, the number of vertices in $V_{\ell,\alpha}^{n,m+1}$ connected to $e_{0}^{m+1}$ is $\alpha(\alpha-1),$ and the number of vertices in $V_{\ell,\alpha+1}^{n,m+1}$ connected to $e_{0}^{m+1}$ is $\alpha(n+1-\alpha).$

By using equations $(\ref{eq:3.21}),(\ref{eq:3.23})$ and $(\ref{eq:3.24}),$ we extend the right-hand side of $(\ref{eq:3.29})$ as follows:
\begin{align*}
&\qquad\sum_{y\in V_{\ell,\alpha}^{n,m+1}}h_{1}^{m+1}(e_{1}^{m+1})+\sum_{y\in V_{\ell,\alpha+1}^{n,m+1}}h_{1}^{m+1}(e_{1}^{m+1})\\
&=\sum_{y\in C_{\ell,\alpha,2}^{n,m+1}}h_{1}^{m+1}(e_{1}^{m+1})+\sum_{y\in B_{\ell,\alpha,2}^{n,m+1}}h_{1}^{m+1}(e_{1}^{m+1})
+\sum_{y\in C_{\ell,\alpha+1,2}^{n,m+1}}h_{1}^{m+1}(e_{1}^{m+1})\\
&=\sum_{y\in C_{\ell,\alpha,2}^{n,m+1}}\sum_{\tilde{\ell}\neq i}r_{\tilde{\ell}}^{\alpha,2}h_{1}^{m}([q_{\tilde{\ell}},q_{i}])+\sum_{y\in C_{\ell,\alpha,2}^{n,m+1}}\sum_{\tilde{\ell}\neq i}r_{\tilde{\ell}}^{\alpha,4}h_{1}^{m}([q_{\tilde{\ell}},q_{i}])\\
&\quad+\sum_{y\in B_{\ell,\alpha,2}^{n,m+1}}\sum_{\tilde{\ell}\neq i}r_{\tilde{\ell}}^{\alpha,4}h_{1}^{m}([q_{i},q_{\tilde{\ell}}])
+\sum_{y\in C_{\ell,\alpha+1,2}^{n,m+1}}\sum_{\tilde{\ell}\neq i}r_{\tilde{\ell}}^{\alpha,5}h_{1}^{m}([q_{i},q_{\tilde{\ell}}])\\
&=\Big[(\alpha-1)\bar{b}_{i,\alpha}+(\alpha-2)(\alpha-1)\bar{b}_{i,\alpha}+(\alpha-1)\bar{b}_{i,\alpha}+\alpha(n+1-\alpha)\bar{b}_{i,\alpha+1}-\alpha nb_{1,\alpha}\Big]f_{0}^{m}(q_{i})\\
&\quad+\sum_{\tilde{\ell}\in\{\bar{k}_{j}\}\setminus\{i\}}\Big[\sum_{\hat{\ell}\in\{\bar{k}_{j}\}\setminus\{i\}}\bar{b}_{\hat{\ell},\alpha}
+(\alpha-2)\sum_{\hat{\ell}\in\{\bar{k}_{j}\}\setminus\{i\}}\bar{b}_{\hat{\ell},\alpha}+\sum_{\hat{\ell}\in\{\bar{k}_{j}\}\setminus\{i\}}\bar{b}_{\hat{\ell},\alpha}
+\alpha(n+1-\alpha)\bar{b}_{\tilde{\ell},\alpha+1} \\
&\qquad-\alpha nb_{2,\alpha}\Big]f_{0}^{m}(q_{\tilde{\ell}})\\
&\quad+\sum_{\tilde{\ell}\notin\{\bar{k}_{j}\}}\Big[(\alpha-1)\bar{b}_{\alpha}+(\alpha-2)(\alpha-1)\bar{b}_{\alpha}+(\alpha-1)\bar{b}_{\alpha}+\alpha\bar{b}_{\tilde{\ell},\alpha+1}+\alpha(n-\alpha)\bar{b}_{\alpha+1}\\
&\qquad-\alpha nb_{3,\alpha}\Big]f_{0}^{m}(q_{\tilde{\ell}}).
\end{align*}

The following calculations follow the same method in Proposition \ref{prop:5}, so we omit the details.
\end{proof}

\begin{prop}\label{prop:9}
\setlist[enumerate]{itemsep=0pt, topsep=0pt, partopsep=0pt}
For any $h_{1}^{m}\in\mathcal{H}_{\ell,1}^{n,m},$ let $h_{1}^{m+1}$ be a $1$-form obtained by extending $h_{1}^{m}$ to the graph $G_{\ell}^{n,m+1}.$ For any vertex $e_{0}^{m+1}\in B_{\ell,\alpha,2}^{n,m+1},$ we consider the following cases based on its connections:
\begin{enumerate}[label=(\roman*),itemsep=0pt, parsep=0pt]
\item $e_{0}^{m+1}$ is connected to some vertex in $B_{\ell,\alpha,1}^{n,m+1};$
\item $e_{0}^{m+1}$ is not connected to any vertex in $B_{\ell,\alpha,1}^{n,m+1};$
\item $e_{0}^{m+1}$ is connected to some vertex in $B_{\ell,\alpha,3,1}^{n,m+1};$
\item $e_{0}^{m+1}$ is connected to some vertex in $B_{\ell,\alpha,3,2}^{n,m+1}.$
\end{enumerate}
If any of these four cases holds, then
$$\delta_{1}^{m+1}h_{1}^{m+1}(e_{0}^{m+1})=0.$$
\end{prop}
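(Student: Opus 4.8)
The plan is to follow the template already established in the proofs of Propositions \ref{prop:5}--\ref{prop:8}. Starting from \eqref{eq:3.29}, I would expand each incident edge value $h_1^{m+1}(e_1^{m+1})$ using the appropriate formula among \eqref{eq:3.19}--\eqref{eq:3.27}, then rewrite every $h_1^m([q_a,q_b])$ as $f_0^m(q_b)-f_0^m(q_a)$ (using $h_1^m=d_0^m f_0^m$), collect the coefficient of each $f_0^m(q_{\tilde\ell})$, and show it vanishes. Once all these coefficients are zero, the conclusion $\delta_1^{m+1}h_1^{m+1}(e_0^{m+1})=0$ follows, exactly as in Proposition \ref{prop:4}, because $h_1^m\in\mathcal{H}_{\ell,1}^{n,m}$ forces $\delta_1^m h_1^m=0$.

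First I would determine, in each of the four cases, the complete list of vertices adjacent to $e_0^{m+1}$ together with the subset of $V_{\ell,\cdot}^{n,m+1}$ to which each belongs. Since $e_0^{m+1}\in B_{\ell,\alpha,2}^{n,m+1}$, by Proposition \ref{1} and the decomposition of $B_{\ell,\alpha}^{n,m+1}$ it sits inside a copy of $G_{\ell-2\alpha}^{\alpha-1,1}$ as a $V_{\ell-2\alpha,2}^{\alpha-1,1}$-type vertex, so its neighbors distribute over $V_{\ell,\alpha-1}^{n,m+1}\cup V_{\ell,\alpha}^{n,m+1}\cup V_{\ell,\alpha+1}^{n,m+1}$; Cases (i)--(iv) simply record whether this neighbor list meets $B_{\ell,\alpha,1}^{n,m+1}$, avoids it, meets $B_{\ell,\alpha,3,1}^{n,m+1}$, or meets $B_{\ell,\alpha,3,2}^{n,m+1}$. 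Accordingly, for Case (i) I would invoke \eqref{eq:3.21} together with \eqref{eq:3.23}--\eqref{eq:3.27}; for Case (ii), \eqref{eq:3.23}--\eqref{eq:3.27}; for Case (iii), \eqref{eq:3.19}, \eqref{eq:3.23} and \eqref{eq:3.25}--\eqref{eq:3.27}; and for Case (iv), \eqref{eq:3.25}--\eqref{eq:3.27}, in each case following the same bookkeeping carried out in Proposition \ref{prop:8}.

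The decisive input is that a vertex of $B_{\ell,\alpha,2}^{n,m+1}$ is never adjacent to a point of $V_0$, so the analogue of the diagonal entry $(C_1)_{ii}$ vanishes; this is what makes the whole expression cancel, precisely as in Part $2$ of Case $(\textrm{i})$ and Cases $1$--$3$ of Case $(\textrm{ii})$ of Proposition \ref{prop:5} and in Proposition \ref{prop:8}. After the expansion, the coefficient of $f_0^m(q_i)$ (the vertex at which $e_0^{m+1}$ sits) reduces to a row of $-AC$ that must coincide with the corresponding row of $-C_1$, hence equals $0$; the coefficients of the remaining $f_0^m(q_{\tilde\ell})$ collapse using the normalization identities $b_{1,\beta}+(\beta-1)b_{2,\beta}+(n+1-\beta)b_{3,\beta}=1$ and $\sum_{\tilde\ell\in\{\bar k_j\}}\bar b_{\tilde\ell,\beta}+(n+1-\beta)\bar b_\beta=1$ for $\beta=\alpha$ and $\beta=\alpha+1$, as in the last displayed computation in the proof of Proposition \ref{prop:2}. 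This yields $\delta_1^{m+1}h_1^{m+1}(e_0^{m+1})=c\,\delta_1^m h_1^m(e_0^m)=0$ for the appropriate positive constant $c$ built from $\mu_1(e_1^0)/\mu_0(e_0^0)$ and the extension coefficients.

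I expect the main obstacle to be the neighbor census in each case: unlike the vertices handled in Propositions \ref{prop:5}--\ref{prop:6}, a $B_{\ell,\alpha,2}^{n,m+1}$-vertex may be simultaneously adjacent to vertices of several distinct classes (for instance $C_{\ell,\alpha,2}^{n,m+1}$, $B_{\ell,\alpha,1}^{n,m+1}$, $B_{\ell,\alpha,2}^{n,m+1}$, $B_{\ell,\alpha,3,1}^{n,m+1}$, $B_{\ell,\alpha,3,2}^{n,m+1}$ and $C_{\ell,\alpha+1,2}^{n,m+1}$), so one must match the correct edge formula to each incident edge and count multiplicities correctly in each of the four connection patterns. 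Once the neighbor multiplicities are correctly identified, the coefficient cancellation is routine and formally identical to that in the earlier propositions, so the calculations can be compressed and the details omitted in the same manner.
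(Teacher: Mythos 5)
Your proposal is correct and matches the paper's intended argument: the paper itself disposes of this proposition by noting that the connectivity analysis for a vertex in $B_{\ell,\alpha,2}^{n,m+1}$ is the same as in Proposition \ref{prop:8} and omitting the details, which is precisely the expansion-and-cancellation template (via \eqref{eq:3.29}, the edge formulas \eqref{eq:3.19}--\eqref{eq:3.27}, the normalization identities, and harmonicity of $h_{1}^{m}$) that you outline. In fact your write-up supplies more of the bookkeeping (the neighbor census and the matching of edge formulas to cases) than the paper records.
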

\begin{proof}
The connectivity analysis for $e_{0}^{m+1}\in B_{\ell,\alpha,2}^{n,m+1}$ is consistent with that of Proposition \ref{prop:8}; we omit the proof.
\end{proof}

\begin{prop}\label{prop:10}
For any $h_{1}^{m}\in\mathcal{H}_{\ell,1}^{n,m},$ let $h_{1}^{m+1}$ be a $1$-form obtained by extending $h_{1}^{m}$ to the graph $G_{\ell}^{n,m+1}.$ For any vertex $e_{0}^{m+1}\in B_{\ell,\alpha,3,1}^{n,m+1},$ we have
$$\delta_{1}^{m+1}h_{1}^{m+1}(e_{0}^{m+1})=0.$$
\end{prop}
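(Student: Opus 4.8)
The plan is to show $\delta_{1}^{m+1}h_{1}^{m+1}(e_{0}^{m+1})=0$ for a vertex $e_{0}^{m+1}\in B_{\ell,\alpha,3,1}^{n,m+1}$ by the same pattern used in Propositions \ref{prop:5}--\ref{prop:9}: start from the expression \eqref{eq:3.29} for $\delta_{1}^{m+1}h_{1}^{m+1}(e_{0}^{m+1})$, identify which vertex sets the neighbors of $e_{0}^{m+1}$ lie in, substitute the appropriate edge-value formulas from Section \ref{S:3}, collect the coefficients in front of each $f_{0}^{m}(q_{\tilde{\ell}})$, and finally recognize those coefficients as the relevant row of the matrix $-AC$, which forces them to vanish because $e_{0}^{m+1}\notin V_{0}$ (so the corresponding diagonal entry of $C_{1}$ is $0$).

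\textbf{Key steps.} First I would determine, from Proposition \ref{1} and the structure of $SG_{\ell}^{n}$, the neighborhood of a vertex $e_{0}^{m+1}\in B_{\ell,\alpha,3,1}^{n,m+1}$: such a vertex sits in a copy of $G_{\ell-3\alpha}^{\alpha-1,1}$ as one of the ``corner-type'' points, so its neighbors split among $V_{\ell,\alpha}^{n,m+1}$, $V_{\ell,\alpha+1}^{n,m+1}$ (and possibly vertices in neighboring $B$-type sets), with explicit counts analogous to those tallied in Proposition \ref{prop:8}. Second, using the edge formulas \eqref{eq:3.19}, \eqref{eq:3.20}, \eqref{eq:3.22}, \eqref{eq:3.23}, \eqref{eq:3.24} (the same ones invoked in Propositions \ref{prop:8} and \ref{prop:9}), I would expand the right-hand side of \eqref{eq:3.29}, writing each $h_{1}^{m}([q_{i},q_{\tilde\ell}])=f_{0}^{m}(q_{\tilde\ell})-f_{0}^{m}(q_{i})$ and grouping terms by $f_{0}^{m}(q_{i})$, by $f_{0}^{m}(q_{\tilde\ell})$ for $\tilde\ell$ in the iteration-index set $\{\bar k_{j}\}$, and by $f_{0}^{m}(q_{\tilde\ell})$ for $\tilde\ell\notin\{\bar k_{j}\}$. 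Third, I would show that the resulting three coefficient expressions are precisely entries of a row of $-AC$ (the matrix appearing in the harmonic extension algorithm on $SG_{\ell}^{n}$), exactly as argued in the proof of Proposition \ref{prop:5}, Part 2; since $e_{0}^{m+1}$ lies in $B_{\ell,\alpha,3,1}^{n,m+1}$ and hence is not adjacent to any vertex of $V_{0}$, the defining property of $C_{1}$ gives $(C_{1})_{ii}=0$, so each coefficient equals zero and therefore $\delta_{1}^{m+1}h_{1}^{m+1}(e_{0}^{m+1})=0$.

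\textbf{Main obstacle.} The delicate part is the bookkeeping of the neighbor counts and of which edge-value formula applies to each incident edge: a vertex in $B_{\ell,\alpha,3,1}^{n,m+1}$ may simultaneously be incident to edges whose other endpoint lies in $C_{\ell,\alpha,2}^{n,m+1}$, in $B_{\ell,\alpha,2}^{n,m+1}$, in $B_{\ell,\alpha,3,2}^{n,m+1}$, or in $C_{\ell,\alpha+1,2}^{n,m+1}$, so one must match each incident edge with the correct case among \eqref{eq:3.19}--\eqref{eq:3.27} and verify that the partial-sum identities $\sum_{\tilde\ell\in\{\bar k_{j}\}}\bar b_{\tilde\ell,\alpha}+(n+1-\alpha)\bar b_{\alpha}=1$ and $b_{1,\alpha+1}+\alpha b_{2,\alpha+1}+(n-\alpha)b_{3,\alpha+1}=1$ (together with their $\alpha-1$ analogues) collapse the coefficients correctly. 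Since this is structurally identical to the computations carried out in detail in Propositions \ref{prop:5}, \ref{prop:8}, and \ref{prop:9}, I would present the neighbor classification and the setup of \eqref{eq:3.29}, then remark that the remaining algebra is the same as in those propositions and omit the routine details.
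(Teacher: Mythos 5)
Your plan coincides with the paper's own proof, which likewise handles a vertex of $B_{\ell,\alpha,3,1}^{n,m+1}$ by the connectivity analysis of Proposition \ref{prop:8}, expands \eqref{eq:3.29} with the edge-value formulas of Section \ref{S:3}, and invokes the method of Proposition \ref{prop:5} (the collected coefficients form a row of $-AC$, which vanishes since the vertex is not adjacent to $V_{0}$, i.e.\ $(C_{1})_{ii}=0$), omitting the routine algebra. The only small discrepancy is that the paper observes such a vertex has no neighbors in $C_{\ell,\alpha}^{n,m+1}$ at all, so your list of possible neighbor types (which includes $C_{\ell,\alpha,2}^{n,m+1}$) is slightly larger than necessary, but this does not affect the argument.
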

\begin{proof}
The connectivity analysis for $e_{0}^{m+1}\in B_{\ell,\alpha,3,1}^{n,m+1}$ is consistent with that of Proposition \ref{prop:8}. However, in this case, the vertex is not connected to any vertex in $C_{\ell,\alpha}^{n,m+1}.$ Applying the method of Proposition \ref{prop:5} to extend (\ref{eq:3.29}) via $(\ref{eq:3.21})-(\ref{eq:3.24})$ yields the result; we omit the details.
\end{proof}

\begin{prop}\label{prop:11}
\setlist[enumerate]{itemsep=0pt, topsep=0pt, partopsep=0pt}
For any $h_{1}^{m}\in\mathcal{H}_{\ell,1}^{n,m},$ let $h_{1}^{m+1}$ be a $1$-form obtained by extending $h_{1}^{m}$ to the graph $G_{\ell}^{n,m+1}.$ For any vertex $e_{0}^{m+1}\in B_{\ell,\alpha,3,2}^{n,m+1},$ we consider the following cases based on its connections:
\begin{enumerate}[label=(\roman*),itemsep=0pt, parsep=0pt]
\item $e_{0}^{m+1}$ is connected to some vertex in $B_{\ell,\alpha,3,1}^{n,m+1};$
\item $e_{0}^{m+1}$ is not connected to any vertex in $B_{\ell,\alpha,3,1}^{n,m+1};$
\item $e_{0}^{m+1}$ is not connected to some vertex in $B_{\ell,\alpha,2}^{n,m+1}.$
\end{enumerate}
If any of these cases holds, then
$$\delta_{1}^{m+1}h_{1}^{m+1}(e_{0}^{m+1})=0.$$
\end{prop}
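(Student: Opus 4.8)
The plan is to follow the same template established in Propositions \ref{prop:5} through \ref{prop:10}, since the statement asserts the vanishing of $\delta_{1}^{m+1}h_{1}^{m+1}$ at a vertex $e_{0}^{m+1}$ lying in the bottom-layer set $B_{\ell,\alpha,3,2}^{n,m+1}$. First I would record, using Proposition \ref{1} and the layered decomposition of $V_{\ell,\alpha}^{n,m+1}$, the precise local connectivity of such a vertex: how many neighbors it has, and in which of the sets $V_{\ell,\alpha-1}^{n,m+1}$, $V_{\ell,\alpha}^{n,m+1}$, $V_{\ell,\alpha+1}^{n,m+1}$ (equivalently, which of the $B$- and $C$-pieces) those neighbors lie. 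Because $e_{0}^{m+1}\in B_{\ell,\alpha,3,2}^{n,m+1}$ sits in the innermost part of $B_{\ell,\alpha,3}^{n,m+1}$, it is not adjacent to any vertex in $C_{\ell,\alpha}^{n,m+1}$ nor to $B_{\ell,\alpha,1}^{n,m+1}$ or $B_{\ell,\alpha,3,1}^{n,m+1}$; its neighbors are drawn from $B_{\ell,\alpha,2}^{n,m+1}$, $B_{\ell,\alpha,3,2}^{n,m+1}$, and $C_{\ell,\alpha+1,2}^{n,m+1}$, as in cases (i)--(iii) of the statement.

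Next, for each of the three connection cases I would expand the right-hand side of \eqref{eq:3.29} by substituting the appropriate edge formulas: these are exactly the expressions $(\ref{eq:3.23})$, $(\ref{eq:3.25})$, $(\ref{eq:3.26})$ and $(\ref{eq:3.27})$ that describe $h_{1}^{m+1}$ on edges whose endpoints lie in the relevant $B$- and $C$-sets (Case $\mathrm{III}$ of the edge classification). Writing $h_{1}^{m}=d_{0}^{m}f_{0}^{m}$ with $f_{0}^{m}$ harmonic, I would then collect the coefficients of each $f_{0}^{m}(q_{r})$, $r=0,\dots,n$. As in Proposition \ref{prop:5}, the coefficient of $f_{0}^{m}(q_{i})$ (the boundary vertex $e_{0}^{m+1}$ is near) and the coefficients of the remaining $f_{0}^{m}(q_{r})$ will each be recognized as an entry of a row of the matrix $-AC$ from the linear system $AX=B$; since $e_{0}^{m+1}$ is \emph{not} adjacent to any vertex of $V_{0}$, the corresponding diagonal entry of $C_{1}$ is $0$, forcing every one of these coefficients to vanish. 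Combined with the normalization identities for the harmonic-extension weights (e.g.\ $\sum_{j\in\{\bar k_j\}}\bar b_{j,\alpha}+(n+1-\alpha)\bar b_{\alpha}=1$ and its analogue at level $\alpha+1$), this yields $\delta_{1}^{m+1}h_{1}^{m+1}(e_{0}^{m+1})=0$.

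Since the three subcases (i)--(iii) are handled by the same bookkeeping as Proposition \ref{prop:10} (and, through it, Proposition \ref{prop:5}), I would present Case (i) in the argument and remark that Cases (ii) and (iii) are entirely analogous, omitting the routine calculations. The main obstacle I anticipate is purely combinatorial: correctly enumerating the neighbors of $e_{0}^{m+1}$ in $B_{\ell,\alpha,3,2}^{n,m+1}$ and matching each incident edge to the right one of the formulas $(\ref{eq:3.23})$--$(\ref{eq:3.27})$, so that the telescoping of the $f_{0}^{m}(q_{r})$-coefficients into rows of $-AC$ is valid; once the adjacency pattern is pinned down, the vanishing follows mechanically from the structure of $A$ and the weight normalizations exactly as before.
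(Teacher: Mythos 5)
Your overall strategy is the same as the paper's: the published proof merely notes that the connectivity analysis matches Proposition \ref{prop:8}, expands the right-hand side of (\ref{eq:3.29}) using (\ref{eq:3.23}) and (\ref{eq:3.25})--(\ref{eq:3.27}) in Case (i) and (\ref{eq:3.25})--(\ref{eq:3.27}) in Cases (ii)--(iii), and then invokes the same coefficient bookkeeping as in Proposition \ref{prop:5} (writing $h_{1}^{m}=d_{0}^{m}f_{0}^{m}$, identifying the coefficients of the $f_{0}^{m}(q_{r})$ with entries of a row of $-AC$, using that the relevant diagonal entry of $C_{1}$ vanishes because $e_{0}^{m+1}$ is not adjacent to $V_{0}$, together with the weight normalization identities). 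Your proposal reproduces exactly this scheme, so in substance it is the paper's argument.

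One point in your setup is wrong, however: you assert that a vertex of $B_{\ell,\alpha,3,2}^{n,m+1}$ is never adjacent to a vertex of $B_{\ell,\alpha,3,1}^{n,m+1}$. This contradicts Case (i) of the very statement you are proving, which is precisely the situation in which such an adjacency occurs; it is also why the paper's proof includes equation (\ref{eq:3.23}) (its sub-case with starting point in $B_{\ell,\alpha,3,1}^{n,m+1}$ and terminal point in $B_{\ell,\alpha,3,2}^{n,m+1}$) among the formulas used in Case (i), but only (\ref{eq:3.25})--(\ref{eq:3.27}) in Cases (ii)--(iii). You do list (\ref{eq:3.23}) among the substitutions, so the computation you describe would still pick up these edges, but the adjacency pattern you state must be corrected before the neighbor enumeration, and hence the telescoping of coefficients, can be carried out: in Case (i) the neighbors lie in $B_{\ell,\alpha,3,1}^{n,m+1}\cup B_{\ell,\alpha,3,2}^{n,m+1}\cup B_{\ell,\alpha,2}^{n,m+1}\cup C_{\ell,\alpha+1,2}^{n,m+1}$, and only in the remaining cases are the $B_{\ell,\alpha,3,1}^{n,m+1}$-neighbors absent. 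With that correction your argument coincides with the paper's.
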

\begin{proof}
The connectivity analysis for the vertex $e_{0}^{m+1}\in B_{\ell,\alpha,3,2}^{n,m+1}$ is consistent with that of Proposition \ref{prop:8}. To extend the right-hand side of equation $(\ref{eq:3.29}),$ in Case $(\textrm{i}),$ we apply equations $(\ref{eq:3.23})$ and $(\ref{eq:3.25})-(\ref{eq:3.27});$ in Cases $(\textrm{ii})$ and $(\textrm{iii}),$ we use equations $(\ref{eq:3.25})-(\ref{eq:3.27}).$ The proofs follow the same methodology as Proposition \ref{prop:5}; we omit the details.
\end{proof}

From these propositions, we can prove Theorem \ref{thm:1.1}.
\begin{proof}[Proof of Theorem \ref{thm:1.1}]
By Proposition \ref{prop:1} to \ref{prop:3}, for any triangle $e_{2}^{m+1}\in E_{\ell,2}^{n,m+1},$ we have
$$d_{1}^{m+1}h_{1}^{m+1}(e_{2}^{m+1})=0.$$
By Proposition \ref{prop:4} to \ref{prop:11}, for any vertex $e_{0}^{m+1}\in E_{\ell,0}^{n,m+1},$ we have
$$\delta_{1}^{m+1}h_{1}^{m+1}(e_{0}^{m+1})=0.$$
Thus, $h_{1}^{m+1}$ is a harmonic $1$-form on $G_{\ell}^{n,m+1}.$ Let $\{p_{ij}^{k}\}_{k=1}^{\ell-1}$ denote the set of newly iterated vertices insert into the edge $[p_{i},p_{j}].$ Since $h_{1}^{m+1}$ is harmonic, we have
$$h_{1}^{m+1}([p_{i},p_{ij}^{1}])+h_{1}^{m+1}([p_{ij}^{1},p_{ij}^{2}])+\cdots+h_{1}^{m+1}([p_{ij}^{\ell-1},p_{j}])=h_{1}^{m}([p_{i},p_{j}]).$$
\end{proof}

We now describe a basis for $\mathcal{H}_{\ell,1}^{n,m}.$ Let $h$ be an original harmonic $1$-form. We have the following proposition.
\begin{prop}
Let $h$ be a harmonic $1$-form on $G_{\ell}^{n,1}.$ For any word $\omega$ with length $k,$ if we harmonically extend $h$ to the cell $F_{\omega}G_{m-k,l}^{n},$ and define a new form on it
\begin{equation}\label{eq:h1}
h_{\omega}(e):=\begin{cases}
h\circ F_{\omega}^{-1}(e),\quad &\text{$e\in F_{\omega}E_{\ell,1}^{n,1}$};\\
0,\quad &\text{elsewhere}.
\end{cases}
\end{equation}
Then $h_{\omega}$ is a harmonic $1$-form on $G_{k+1,l}^{n}.$
\label{prop:3.13}
\end{prop}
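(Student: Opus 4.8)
The plan is to verify harmonicity of $h_\omega$ directly from the equivalent characterization in the theorem following Definition \ref{def:2.7}: a $1$-form is harmonic if and only if $d_1 h_\omega = 0$ on every triangle and $\delta_1 h_\omega = 0$ at every vertex. First I would unwind the definition \eqref{eq:h1}: on the $N_\ell^n$ first-level cells of $G_{k+1}^{n,1}$ (or rather, on $F_\omega G_{\ell}^{n,m-k}$), $h_\omega$ is supported only on the image cell $F_\omega E_{\ell,1}^{n,1}$, where it equals the pushforward of the original harmonic $1$-form $h$, and it vanishes on all edges outside $F_\omega K$. Since $F_\omega$ is a similitude and hence a graph isomorphism from $G_\ell^{n,1}$ onto the subgraph $F_\omega G_\ell^{n,1}$ preserving orientations (recall the reference orientation $[q_0,q_1,q_2]$ is carried consistently across all scales), the de Rham derivative and dual de Rham derivative commute with $F_\omega$ up to the scaling factors in the weights \eqref{eq:2.8}.

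The key steps, in order: (1) For any triangle $e_2$ lying inside $F_\omega K$, we have $e_2 = F_\omega e_2'$ for a triangle $e_2'$ of $G_\ell^{n,1}$, and $d_1^{\,}h_\omega(e_2) = \sum_{e_1 \subset e_2}\mathrm{sgn}(e_1,e_2)h_\omega(e_1) = \sum_{e_1'\subset e_2'}\mathrm{sgn}(e_1',e_2')h(e_1') = d_1 h(e_2') = 0$ because $h$ is harmonic on $G_\ell^{n,1}$; for any triangle not contained in $F_\omega K$, all three of its edges lie outside $F_\omega E_{\ell,1}^{n,1}$ (a triangle meeting the cell in more than a vertex is a subcell triangle, hence inside it), so $h_\omega$ vanishes on each and $d_1 h_\omega(e_2)=0$ trivially. (2) For the dual derivative at a vertex $e_0$, use the explicit formula \eqref{eq:2.6}: $\delta_1 h_\omega(e_0) = \sum_{e_1 \ni e_0}\frac{\mu_1(e_1)}{\mu_0(e_0)}\mathrm{sgn}(e_0,e_1)h_\omega(e_1)$. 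If $e_0$ is an interior vertex of the cell $F_\omega K$, every incident edge lies in $F_\omega E_{\ell,1}^{n,1}$, so after pulling back by $F_\omega$ and using \eqref{eq:2.8} (the scaling factor $b_1^\omega$ is common to all incident edges while the $\mu_0$ ratio introduces a single common constant), this reduces to a positive multiple of $\delta_1 h(e_0') = 0$. If $e_0$ is a boundary vertex of $F_\omega K$ (an image of some $q_i$ under $F_\omega$), the incident edges split into those inside $F_\omega K$ and those outside; $h_\omega$ vanishes on the outside ones, so $\delta_1 h_\omega(e_0)$ again equals a positive multiple of the sum over incident edges of $F_\omega K$ only, which is the pullback of $\delta_1 h(q_i)=0$. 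If $e_0 \notin F_\omega K$, all incident edges carry $h_\omega = 0$, so the sum vanishes. (3) Conclude via the theorem after Definition \ref{def:2.7} that $-\Delta_1 h_\omega = 0$, i.e. $h_\omega \in \mathcal{H}_{\ell,1}^{n,k+1}$.

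I expect the main obstacle to be step (2) at the boundary vertices of the cell $F_\omega K$, since there one must genuinely use that $\delta_1 h(q_i) = 0$ as an identity on the generation-$1$ graph $G_\ell^{n,1}$ — the harmonicity of $h$ is being invoked at the corner of the small cell, not in its interior — and one must check that the weights restricted to the small cell are (a constant multiple of) the weights on $G_\ell^{n,1}$; this is exactly the content of the multiplicative structure \eqref{eq:2.8}, so the verification is bookkeeping but must be done carefully. A secondary subtlety is confirming that the reference orientation, and hence all the parity functions $\mathrm{sgn}(\cdot,\cdot)$, are genuinely preserved under $F_\omega$: because each $F_i(x) = x/\ell + b_{i,\ell,n}$ is an orientation-preserving similitude and the positive orientation $[q_0,q_1,q_2]$ is declared consistent across scales, $\mathrm{sgn}(F_\omega e_k, F_\omega e_{k+1}) = \mathrm{sgn}(e_k, e_{k+1})$, so no sign corrections appear. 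Everything else is a routine transfer of the harmonic conditions through a graph isomorphism and can be stated without detailed computation.
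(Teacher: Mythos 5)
Your proposal is correct and follows essentially the same route as the paper's proof: check $d_{1}h_{\omega}=0$ on triangles and $\delta_{1}h_{\omega}=0$ at vertices, reducing the case inside $F_{\omega}E_{\ell,1}^{n,1}$ to the harmonicity of $h$ via the pullback by $F_{\omega}$ (with the sign function preserved), and observing that everything vanishes trivially off the cell. Your extra bookkeeping on the weights $\mu_{1}/\mu_{0}$ from (\ref{eq:2.8}) and the interior/boundary vertex distinction only makes explicit what the paper's computation leaves implicit.
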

\begin{proof}
For any $e_{2}^{k+1}\in E_{2,l}^{k+1,n},$ there exist three edges $e_{1}^{k+1}\in E_{1,l}^{k+1,n}$ such that
$${\rm sgn}(e_{1}^{k+1},e_{2}^{k+1})\neq 0.$$
Furthermore, for any $e\in F_{\omega}E_{\ell,1}^{n,1},$ the form satisfies $h_{\omega}(e)=h\circ F_{\omega}^{-1}(e).$ In the set $E_{1,l}^{k+1,n},$ there are $\binom{n+1}{2}N_{\ell}^{n}$ edges contained in $F_{\omega}E_{\ell,1}^{n,1},$ and these edges form $\binom{n+1}{3}N_{\ell}^{n}$ triangles $e_{2}^{k+1}.$ For these triangles, we have
\begin{align*}
d_{1}^{k+1}h_{\omega}(e_{2}^{k+1})&=\sum_{e_{1}^{k+1}\in F_{\omega}E_{\ell,1}^{n,1}}{\rm sgn}(e_{1}^{k+1},e_{2}^{k+1})h_{\omega}(e_{1}^{k+1})\quad(\text{by equation (\ref{eq:2.3})})\\
&=\sum_{\substack{e_{1}^{1}\in E_{\ell,1}^{n,1}\\e_{2}^{k+1}=F_{\omega}e_{2}^{1}}}{\rm sgn}(e_{1}^{k+1},e_{2}^{k+1})h(e_{1}^{1})\quad(\text{by equation (\ref{eq:h1})})\\
&=\sum_{\substack{e_{1}^{1}\in E_{\ell,1}^{n,1}\\e_{1}^{1}\subset e_{2}^{1}}}{\rm sgn}(e_{1}^{1},e_{2}^{1})h(e_{1}^{1})=d_{1}^{1}h(e_{2}^{1})=0.\quad(h\text{ is a harmonic $1$-form})\nonumber
\end{align*}
For other triangles $e_{2}^{k+1},$ by equation (\ref{eq:h1}), we have
$$d_{1}^{k+1}h_{\omega}(e_{2}^{k+1})=\sum_{e_{1}^{k+1}\in E_{n,1}^{k+1}\setminus{F_{\omega}E_{n,1}^{1}}}{\rm sgn}(e_{1}^{k+1},e_{2}^{k+1})h_{\omega}(e_{1}^{k+1})=0.$$

We now prove that $\delta_{1}^{k+1}h_{\omega}=0.$ Let $e_{0}^{k+1}\in E_{\ell,0}^{n,k+1}$ be an ending vertex of some edge in $F_{\omega}E_{\ell,1}^{n,1};$ there are exactly $M_{\ell}^{n,1}$ of them.

We first address the case that $e_{0}^{k+1}\in F_{\omega}E_{\ell,0}^{n,1}.$ Based on the relative orientations between vertices and edges, we decompose the expression $\delta_{1}^{k+1}h_{\omega}$ into two parts: one with ${\rm sgn}=1$ and the other with ${\rm sgn}=-1.$
\begin{align*}
\delta_{1}^{k+1}h_{\omega}(e_{0}^{k+1})
&=\sum_{e_{1}^{k+1}\in F_{\omega}E_{\ell,1}^{n,1}}{\rm sgn}(e_{0}^{k+1},e_{1}^{k+1})h_{\omega}(e_{1}^{k+1})\quad(\text{by equation (\ref{eq:2.6})})\\
&=\sum_{e_{1}^{k+1}\in F_{\omega}E_{\ell,1}^{n,1}}h_{\omega}(e_{1}^{k+1})-\sum_{e_{1}^{(k+1)^{\prime}}\in F_{\omega}E_{\ell,1}^{n,1}}h_{\omega}(e_{1}^{(k+1)^{\prime}})\\
&=\sum_{e_{1}^{1}\in E_{\ell,1}^{n,1}}h(e_{1}^{1})-\sum_{e_{1}^{1^{\prime}}\in E_{\ell,1}^{n,1}}h(e_{1}^{1^{\prime}})\quad(\text{by equation (\ref{eq:h1})})\\
&=\delta_{1}^{1}h(e_{0}^{1})=0.\quad(h\text{ is a harmonic $1$-form})
\end{align*}

For any vertex not contained in $F_{\omega}E_{\ell,1}^{n,1},$ equation (\ref{eq:h1}) directly yields:
$$\delta_{1}^{k+1}h_{\omega}(e_{0}^{k+1})=\sum_{e_{1}^{k+1}\in E_{n,1}^{k+1}\setminus{F_{\omega}E_{\ell,1}^{n,1}}}{\rm sgn}(e_{0}^{k+1},e_{1}^{k+1})h_{\omega}(e_{1}^{k+1})=0.$$
\begin{figure}[htbp]
  \centering
  \begin{tikzpicture}[scale=2]
    \draw (0,0) --(1,0)--(0.5,0.87) --(0,0);
    \draw (1/3,0) --(1/6,0.29) --(0.5,0.29) --(5/6,0.29) --(2/3,0) --(0.5,0.29) --(1/3,0);
    \draw (1/3,0.58) --(2/3,0.58) --(0.5,0.29) --(1/3,0.58);
  \end{tikzpicture}\qquad\qquad
  \begin{tikzpicture}[scale=2]
    \draw (0,0) --(1,0)--(0.5,0.87) --(0,0);
    \draw (1/3,0) --(1/6,0.29) --(0.5,0.29) --(5/6,0.29) --(2/3,0) --(0.5,0.29) --(1/3,0);
    \draw (1/3,0.58) --(2/3,0.58) --(0.5,0.29) --(1/3,0.58);
    \draw (0,0) --(1/9,0) --(1/18,0.096) --(5/18,0.096) --(2/9,0) --(1/6,0.096) --(1/9,0);
    \draw (1/6,0.096) --(1/9,0.19) --(2/9,0.19) --(1/6,0.096);
    \draw (4/9,0) --(7/18,0.096) --(11/18,0.096) --(5/9,0) --(1/2,0.096) --(4/9,0);
    \draw (1/2,0.096) --(4/9,0.19) --(5/9,0.19) --(0.5,0.096);
    \draw (7/9,0) --(13/18,0.096) --(17/18,0.096) --(8/9,0) --(5/6,0.096) --(7/9,0);
    \draw (5/6,0.096) --(7/9,0.19) --(8/9,0.19) --(5/6,0.096);
    \draw (5/18,0.29) --(4/18,0.38) --(8/18,0.38) --(7/18,0.29) --(1/3,0.38) --(5/18,0.29);
    \draw (1/3,0.38) --(5/18,0.48) --(7/18,0.48) --(1/3,0.38);
    \draw (11/18,0.29) --(10/18,0.38) --(14/18,0.38) --(13/18,0.29) --(12/18,0.38) --(11/18,0.29);
    \draw (12/18,0.38) --(11/18,0.48) --(13/18,0.48) --(12/18,0.38);
    \draw [red] (4/9,0.58) --(7/18,0.67) --(11/18,0.67) --(5/9,0.58) --(1/2,0.67) --(4/9,0.58);
    \draw [red] (1/2,0.67) --(8/18,0.77) --(10/18,0.77) --(1/2,0.67);
    \draw [red] (1/3,0.58) --(1/2,0.87) --(2/3,0.58) --(1/3,0.58);
  \end{tikzpicture}
\end{figure}
\end{proof}

Let $\gamma$ be an upside-down triangle on the graph $G_{\ell}^{n,1}.$ By the values of edges and Definition \ref{def:2.10}, $\gamma$ is a $1$-cycle. On each triangle $e_{2}^{0},$ there are $\frac{l(\ell-1)}{2}$ cycles $\gamma.$
\begin{rema}\label{re:1}
Let $\gamma_{i}$ be a $1$-cycle in some $e_{2}^{0}\in E_{\ell,2}^{n,0}.$ For any $h_{1}\in\mathcal{H}_{\ell,1}^{n,1},$ we have
\begin{equation}\label{eq:3.31}
\sum_{i}\int_{\gamma_{i}}h_{1}=0,
\end{equation}
since the sum of the integrals over $\gamma_{i}$ is the difference between the integral over $\partial e_{2}^{0}$ and the sum of the integrals over $\partial e_{2}^{1},$ where $e_{2}^{1}\subset e_{2}^{0}.$ By the definition of harmonic $1$-form and Theorem \ref{thm:2.10}, the sum of all integrals vanishes.
\end{rema}

We define the boundary edges as the outermost edges in the graph $G_{\ell}^{n,m}.$ On $G_{\ell}^{n,1},$ we prescribe the following values for the edges. By the harmonic conditions, \textit{i.e.}, $d_{1}^{1}h(e_{2}^{1})=0$ and $\delta_{1}^{1}h(e_{0}^{1})=0,$ the value of a boundary edge that contains at least one boundary vertex is set to $1.$ If a triangle contains two edges that share the same boundary vertex, then the value of the third edge is set to $-2$ or $0.$ The value of a boundary edge that does not contain any boundary vertex is set to $2.$ If a triangle contains a boundary edge that does not contain any boundary vertex, then the values of the other two edges are set to $-1.$ The values of the remaining edges are set to $0.$

\begin{proof}[Proof of Theorem \ref{thm:1.2}]
If the cells corresponding to $\omega$ and $\omega^{\prime}$ are disjoint, then by equation (\ref{eq:h1}), the harmonic $1$-forms $h_{\omega}$ and $h_{\omega^{\prime}}$ are orthogonal, \textit{i.e.},
$$\langle h_{\omega},h_{\omega^{\prime}}\rangle_{1}^{m}=\sum_{e_{1}^{m}\in E_{\ell,1}^{n,m+1}}\mu_{1}(e_{1}^{m})h_{\omega}(e_{1}^{m})\overline{h_{\omega^{\prime}}(e_{1}^{m+1})}=0.$$
The last equality holds due to the following cases. When $e_{1}^{m+1}\notin F_{\omega}E_{\ell,1}^{n,1}\cup F_{\omega^{\prime}}E_{\ell,1}^{n,1},$ $h_{\omega}(e_{1}^{m+1})=h_{\omega^{\prime}}(e_{1}^{m+1})=0.$ When $e_{1}^{m+1}\in F_{\omega}E_{\ell,1}^{n,1},$ $h_{\omega^{\prime}}(e_{1}^{m+1})=0.$ When $e_{1}^{m+1}\in F_{\omega^{\prime}}E_{\ell,1}^{n,1},$ $h_{\omega}(e_{1}^{m+1})=0.$ In all cases, the product $h_{\omega}(e_{1}^{m})\overline{h_{\omega^{\prime}}(e_{1}^{m+1})}$ vanishes.

Next, assume that the $\omega$-cell is contained in the $\omega^{\prime}$-cell, where $|\omega|=m.$ In $E_{\ell,1}^{n,m+1},$ there are $\binom{n+1}{2}N_{\ell}^{n}$ edges such that $h_{\omega}\neq 0.$ We focus on the triangle formed by three boundary vertices under the IFS maps. Consequently, there are $\binom{n+1}{2}l$ edges on the boundary of the triangle and $\binom{n+1}{2}(N_{\ell}^{n}-l)$ edges in the interior of the triangle. We denote the sets of boundary and interior edges as ${\rm O}_{\ell,1}^{n,m+1}$ and ${\rm I}_{\ell,1}^{n,m+1},$ respectively. By the values of the edges and equations (\ref{eq:2.2}) and(\ref{eq:h1}), we obtain
\begin{align*}
&\qquad\langle h_{\omega},h_{\omega^{\prime}}\rangle_{1}^{m+1}=\sum_{e_{1}^{m+1}\in F_{\omega}E_{\ell,1}^{n,1}}\mu_{1}(e_{1}^{m+1})h_{\omega}(e_{1}^{m+1})\overline{h_{\omega^{\prime}}(e_{1}^{m+1})}\\
&=\sum_{\substack{e_{1}^{m+1}\in E_{\ell,1}^{n,m+1}\\e_{1}^{m+1}\in{\rm O}_{\ell,1}^{n,m+1}}}\mu_{1}(e_{1}^{m+1})h_{\omega}(e_{1}^{m+1})\overline{h_{\omega^{\prime}}(e_{1}^{m+1})}
+\sum_{\substack{e_{1}^{m+1}\in E_{\ell,1}^{n,m+1}\\e_{1}^{m+1}\in{\rm I}_{\ell,1}^{n,m+1}}}\mu_{1}(e_{1}^{m+1})h_{\omega}(e_{1}^{m+1})\overline{h_{\omega^{\prime}}(e_{1}^{m+1})}\\
&=\sum_{\substack{e_{1}^{m+1}\in{\rm O}_{\ell,1}^{n,m+1}\\e_{1}^{1}\in E_{\ell,1}^{n,1}\cap{\rm O}_{\ell,1}^{n,1}}}\mu_{1}(e_{1}^{m+1})h(e_{1}^{1})\overline{h_{\omega^{\prime}}(e_{1}^{m+1})}
+\sum_{\substack{e_{1}^{m+1}\in{\rm I}_{\ell,1}^{n,m+1}\\e_{1}^{1}\in E_{\ell,1}^{n,1}\cap{\rm I}_{\ell,1}^{n,1}}}\mu_{1}(e_{1}^{m+1})h(e_{1}^{1})\overline{h_{\omega^{\prime}}(e_{1}^{m+1})}\\
&=\sum_{\substack{e_{1}^{m+1}\in{\rm O}_{\ell,1}^{n,m+1}\\e_{1}^{1}\in{\rm O}_{\ell,1}^{n,1}\\h(e_{1}^{1})=1}}\mu_{1}(e_{1}^{m+1})\overline{h_{\omega^{\prime}}(e_{1}^{m+1})}
+\sum_{\substack{e_{1}^{m+1}\in{\rm O}_{\ell,1}^{n,m+1}\\e_{1}^{1}\in{\rm O}_{\ell,1}^{n,1}\\h(e_{1}^{1})=2}}\mu_{1}(e_{1}^{m+1})2\times\overline{h_{\omega^{\prime}}(e_{1}^{m+1})}\\
&\quad+\sum_{\substack{e_{1}^{m+1}\in{\rm I}_{\ell,1}^{n,m+1}\\e_{1}^{1}\in{\rm I}_{\ell,1}^{n,1}\\h(e_{1}^{1})=-1}}\mu_{1}(e_{1}^{m+1})(-1)\times\overline{h_{\omega^{\prime}}(e_{1}^{m+1})}
+\sum_{\substack{e_{1}^{m+1}\in{\rm I}_{\ell,1}^{n,m+1}\\e_{1}^{1}\in{\rm I}_{\ell,1}^{n,1}\\h(e_{1}^{1})=-2}}\mu_{1}(e_{1}^{m+1})(-2)\times\overline{h_{\omega^{\prime}}(e_{1}^{m+1})}\\
&\quad+\sum_{\substack{e_{1}^{m+1}\in{\rm I}_{\ell,1}^{n,m+1}\\e_{1}^{1}\in{\rm I}_{\ell,1}^{n,1}\\h(e_{1}^{1})=0}}\mu_{1}(e_{1}^{m+1})(0)\times\overline{h_{\omega^{\prime}}(e_{1}^{m+1})}\quad(\text{the value of $h(e_{1}^{1}),e_{1}^{1}\in E_{\ell,1}^{n,1}$})\\
&=\mu_{1}(e_{1}^{m+1})d_{1}^{m+1}h_{\omega^{\prime}}(e_{2}^{m+1})+\sum_{\substack{e_{1}^{m+1}\in{\rm O}_{\ell,1}^{n,m+1}\\e_{1}^{1}\in{\rm O}_{\ell,1}^{n,1}\\h(e_{1}^{1})=2}}\mu_{1}(e_{1}^{m+1})\overline{h_{\omega^{\prime}}(e_{1}^{m+1})}\\
&\quad-2\Big[\sum_{\substack{e_{1}^{m+1}\in{\rm I}_{\ell,1}^{n,m+1}\\e_{1}^{1}\in{\rm I}_{\ell,1}^{n,1}\\h(e_{1}^{1})=-1}}\mu_{1}(e_{1}^{m+1})\times\overline{h_{\omega^{\prime}}(e_{1}^{m+1})}+\sum_{\substack{e_{1}^{m+1}\in{\rm I}_{\ell,1}^{n,m+1}\\e_{1}^{1}\in{\rm I}_{\ell,1}^{n,1}\\h(e_{1}^{1})=-2}}\mu_{1}(e_{1}^{m+1})\overline{h_{\omega^{\prime}}(e_{1}^{m+1})}\\
&\quad+\sum_{\substack{e_{1}^{m+1}\in{\rm I}_{\ell,1}^{n,m+1}\\e_{1}^{1}\in{\rm I}_{\ell,1}^{n,1}\\h(e_{1}^{1})=0}}\mu_{1}(e_{1}^{m+1})\overline{h_{\omega^{\prime}}(e_{1}^{m+1})}\Big]+\sum_{\substack{e_{1}^{m+1}\in{\rm I}_{\ell,1}^{n,m+1}\\e_{1}^{1}\in{\rm I}_{\ell,1}^{n,1}\\h(e_{1}^{1})=-1}}\mu_{1}(e_{1}^{m+1})\overline{h_{\omega^{\prime}}(e_{1}^{m+1})}\\
&\quad+2\sum_{\substack{e_{1}^{m+1}\in{\rm I}_{\ell,1}^{n,m+1}\\e_{1}^{1}\in{\rm I}_{\ell,1}^{n,1}\\h(e_{1}^{1})=0}}\mu_{1}(e_{1}^{m+1})\overline{h_{\omega^{\prime}}(e_{1}^{m+1})}\quad(\text{by definition of $d_{1}^{m}$})\\
&=\mu_{1}(e_{1}^{m+1})d_{1}^{m+1}h_{\omega^{\prime}}(e_{2}^{m+1})-2\Big[\sum_{\substack{e_{2}^{m+1}\in E_{\ell,2}^{n,m+1}\\e_{2}^{m+1}\supset e_{1}^{m+1}\in F_{\omega}E_{\ell,1}^{n,1}}}\mu_{1}(e_{1}^{m+1})d_{1}^{m+1}h_{\omega^{\prime}}(e_{2}^{m+1})-d_{1}^{m}h_{\omega^{\prime}}(e_{2}^{m})\Big]\\
&\quad+\sum_{\substack{e_{2}^{m+1}\in E_{\ell,2}^{n,m+1}\\e_{2}^{m+1}\supset e_{1}^{m+1}\in{\rm O}_{\ell,1}^{n,m+1}\\h_{\omega}(e_{1}^{m+1})=2}}\mu_{1}(e_{1}^{m+1})d_{1}^{m+1}h_{\omega^{\prime}}(e_{2}^{m+1})+\sum_{\substack{e_{2}^{m+1}\in E_{\ell,2}^{n,m+1}\\e_{2}^{m+1}\supset e_{1}^{m+1}\in{\rm I}_{\ell,1}^{n,m+1}\\h_{\omega}(e_{m+1}^{1})=0}}\mu_{1}(e_{1}^{m+1})d_{1}^{m+1}h_{\omega^{\prime}}(e_{2}^{m+1})\\
&=0. \quad(\text{$h_{\omega},h_{\omega^{\prime}}\in\mathcal{H}_{1,l}^{m+1,n}$})
\end{align*}

Let $M$ be the total number of independent $1$-cycles in $G_{\ell}^{n,1}.$ Since the dimension of de Rham cohomology space $H^{1}$ equals the number of independent $1$-cycles in $G_{\ell}^{n,1},$ combining this with Theorem \ref{thm:2.9} (Hodge decomposition), we deduce that the dimension of $\mathcal{H}_{\ell,1}^{n,1}$ is
$$\dim\mathcal{H}_{\ell,1}^{n,1}=\dim H^{1}=\sharp\{\gamma:\gamma\text{ is an independent $1$-cycles in } G_{\ell}^{n,1}\}=M.$$

Given a basis $\{h_{1,j}\}_{j=1}^{M}$ of $\mathcal{H}_{\ell,1}^{n,1},$ by using the IFS maps $\{F_{i}\},$ we can construct a family of $1$-forms $\{h_{1,j}\circ F_{\omega}^{-1}\}$ on $G_{\ell}^{n,m},$ where $|\omega|\leq m-1.$ By Proposition \ref{prop:3.13}, we know that $h_{1,j}\circ F_{\omega}^{-1}$ is harmonic.

Next, we prove the linear independence of $\{h_{1,j}\circ F_{\omega}^{-1}\}.$ Let $|\omega|=1,$ and let $\sum c_{j}h_{1,j}=0,$ where $c_{j}\in\mathbb{R}.$ For any $1$-cycle $\gamma_{i},$ we have $\sum c_{j}\int_{\gamma_{i}}h_{1,j}=0.$ By Remark \ref{re:1}, \textit{i.e.}, $\int_{\gamma_{i}}h_{1,j}=\delta_{i,j},$ all of the coefficients $c_{j}$ are zero. Assume that $\{h_{1,j}\circ F_{\omega}^{-1}:|\omega|=m\}$ is linearly independent. Let $|\omega|=m+1,$ and let
$$\sum c_{\omega,j}h_{1,j}\circ F_{\omega}^{-1}=0,$$
where $c_{\omega,j}\in\mathbb{R}$ and $F_{\omega}\gamma_{j}$ is an independent $1$-cycle. By Theorem \ref{thm:1.1} and Proposition \ref{prop:3.13}, we have $\int_{F_{\omega^{\prime}}\gamma_{j}}h_{1,j}\circ F_{\omega}^{-1}=0$ for any $\omega^{\prime}\neq\omega.$ In fact, by Definition \ref{def:2.10}, we have
\begin{equation*}
\begin{split}
\int_{F_{\omega}\gamma_{j}}h_{1,j}\circ F_{\omega}^{-1}&=\sum_{e_{1}^{m+1}\in F_{\omega}\gamma_{j}}h_{1,j}\circ F_{\omega}^{-1}(e_{1}^{m+1})=\sum_{e_{1}^{m+1}\in F_{\omega}\gamma_{j}}h_{1,j}(F_{\omega}^{-1}e_{1}^{m})\\
&=\sum_{e_{1}^{1}\in\gamma_{j}}h_{1,j}(e_{1}^{1})=\int_{\gamma_{j}}h_{1,j}=1.
\end{split}
\end{equation*}
When integrating over each $1$-cycle $F_{\omega^{\prime}}\gamma_{j},$ only the term corresponding to $\omega^{\prime}=\omega$ contributes a nonzero value, while all other terms vanish. Consequently, each coefficient $c_{\omega,j}$ must be zero. This establishes the linear independence of $\{h_{1,j}\circ F_{\omega}\}.$

Let $h_{\omega}=\sum c_{\omega^{\prime}}h_{1,j}\circ F_{\omega^{\prime}}^{-1},$ where $c_{\omega^{\prime}}\in\mathbb{R}$ and $|\omega^{\prime}|=m.$ By the definition of harmonic $1$-forms and Proposition \ref{prop:3.13}, we deduce that $h_{\omega}$ is harmonic. Therefore,
$$\mathcal{H}_{\ell,1}^{n,m}={\rm span}\{h_{1,j}\circ F_{\omega}^{-1}\},$$
and
$$\dim\mathcal{H}_{\ell,1}^{n,m}=M+M\times N_{\ell}^{n}+\cdots+M\times (N_{\ell}^{n})^{m-1}=M\frac{(N_{\ell}^{n})^{m}-1}{N^{n}_{\ell}-1}.$$
\end{proof}

\section{2-FORMS ON $2$-DIMENSIONAL LEVEL-$3$ SIERPI\'{N}SKI GASKET}\label{S:4}
\setcounter{equation}{0}
In this section, we discuss a theory of $2$-forms on $SG_{3}^{2}$  that involves the derivatives $d_{2}$ and $\delta_{2}.$ We consider two cases. The first one applies to $2$-forms $f_{2}$ that are absolutely continuous with respect to the Kusuoka measure (with results extendable to $SG_{\ell}^{n}$); the second one applies to $2$-forms $f_{2}$ that are absolutely continuous with respect to the standard self-similar measure on $SG_{3}^{2},$ with continuous Radon-Nikodym derivative.

For any word $\omega$ of length $m,$ let $\Omega=\bigcup_{\omega}F_{\omega}SG_{\ell}^{n}$ be a union of cells on $SG_{\ell}^{n}.$ Let $h$ be a harmonic function. An energy measure $\nu_{h}$ is defined by
$$\nu_{h}(\Omega):=\sum\E_{0}^{m}(h|_{F_{\omega}SG_{\ell}^{n}}):=\sum_{e_{2}^{m}\subset\Omega}\nu_{h}(e_{2}^{m}).$$
The Kusuoka measure is defined as
$$\nu:=\nu_{h}+\nu_{h^{\bot}}$$
where $\{h,h^{\bot}\}$ is an orthogonal basis of harmonic functions modulo constants and the definition of $\nu_{h^{\bot}}$ is similar with $\nu_{h}.$ This definition is independent of the orthogonal basis used.

For simplicity, we take an example on $SG_{3}^{2}.$ Let $\nu_{h}$ be a $2$-form for some harmonic function $h,$ and let $\sigma_{1}$ be a line segment joining $q_{1}$ and $q_{2}.$ We consider the formula $\nu_{h}(\Omega_{n})=\sum\nu_{h}(\sigma_{2}^{n}),$ where $\Omega_{n}=\cup_{|\omega|=n} F_{\omega}SG_{3}^{2}$ with $\omega_{j}=1,2,3,$ and $\sigma_{2}^{n}=F_{\omega}SG_{3}^{2}$ is a cell. From \cite[p.2124]{Azzam-Hall-Strichartz_2008} and \cite[Theorem 2.8]{Owen-Strichartz_2012}, we obtain that
\begin{equation*}
\left(
\begin{array}{c}
\E_{0}(h|_{F_{\omega 10}K})\\
\E_{0}(h|_{F_{\omega 11}K})\\
\E_{0}(h|_{F_{\omega 12}K})
\end{array}\right)
=E_{1}\left( \begin{array}{c}
\E_{0}(h|_{F_{\omega 0}K})\\
\E_{0}(h|_{F_{\omega 1}K})\\
\E_{0}(h|_{F_{\omega 2}K})
\end{array} \right),
\left( \begin{array}{c}
\E_{0}(h|_{F_{\omega 20}K})\\
\E_{0}(h|_{F_{\omega 21}K})\\
\E_{0}(h|_{F_{\omega 22}K})
\end{array}\right)
=E_{2}\left( \begin{array}{c}
\E_{0}(h|_{F_{\omega 0}K})\\
\E_{0}(h|_{F_{\omega 1}K})\\
\E_{0}(h|_{F_{\omega 2}K})
\end{array} \right),
\end{equation*}
\begin{equation*}
\left( \begin{array}{c}
\E_{0}(h|_{F_{\omega 30}K})\\
\E_{0}(h|_{F_{\omega 31}K})\\
\E_{0}(h|_{F_{\omega 32}K})
\end{array}\right)
=E_{3}\left( \begin{array}{c}
\E_{0}(h|_{F_{\omega 0}K})\\
\E_{0}(h|_{F_{\omega 1}K})\\
\E_{0}(h|_{F_{\omega 2}K})
\end{array} \right).
\end{equation*}
where
\begin{equation*}
E_{1}
=\frac{98}{5^{4}3^{3}}\left[ \begin{array}{ccc}
287 & 962 & -283\\
-49 & 3701 & -49\\
-283 & 962 & 287
\end{array} \right],\quad
E_{2}
=\frac{98}{5^{4}3^{3}}\left[ \begin{array}{ccc}
287 & -283 & 962\\
-283 & 287 & 962\\
-49 & -49 & 3701
\end{array} \right],
\end{equation*}
\begin{equation*}
E_{3}
=\frac{1}{5^{4}3^{3}4}\left[ \begin{array}{ccc}
1174 & 49 & 49\\
-962 & 3613 & 1213\\
-962 & 1213 & 3613
\end{array} \right].
\end{equation*}
The relation between $\nu_{h}(F_{i}K)(i=0,1,2)$ and $\nu_{h}(F_{i}K)(i=3,4,5)$ is that
\begin{equation*}
\left( \begin{array}{ccc}
\nu_{h}(F_{3}K)\\
\nu_{h}(F_{4}K)\\
\nu_{h}(F_{5}K)
\end{array}\right)=
\frac{1}{60}\left[ \begin{array}{ccc}
-2 & 13 & 13\\
13 & -2 & 13\\
13 & 13 & -2
\end{array} \right]
\left( \begin{array}{ccc}
\nu_{h}(F_{0}K)\\
\nu_{h}(F_{1}K)\\
\nu_{h}(F_{2}K)
\end{array}\right)
=:C\left( \begin{array}{ccc}
\nu_{h}(F_{0}K)\\
\nu_{h}(F_{1}K)\\
\nu_{h}(F_{2}K)
\end{array}\right).
\end{equation*}
Let $e=(\nu_{h}(F_{0}K),\nu(F_{1}K),\nu(F_{2}K))^{T}.$ We obtain
\begin{equation}
\nu_{h}(F_{\omega}K)=\E_{0}^{m}(h\circ F_{\omega})=\left( \begin{array}{ccc}
1 &
1 &
1
\end{array}\right)
(I+C)E_{\omega_{m}}\cdots E_{\omega_{1}}e,
\end{equation}
where $I$ is the $3\times 3$ identity matrix.

\begin{prop}
Let $\nu$ be a Kusuoka measure on $SG_{3}^{2}.$ For any harmonic function $h,$ the value of $\nu(\Omega_{n})$ can be written as
$$\nu(\Omega_{n})=a\biggr{(}\frac{15235+21\sqrt{257505}}{3375}\biggr{)}^{n}+b\biggr{(}\frac{15235-21\sqrt{257505}}{3375}\biggr{)}^{n}$$
where $\Omega_{n}=\cup_{|\omega|=n}F_{\omega}SG_{3}^{2}$ and the constants $a$ and $b$ are explicitly determined by the values of $h$ in $E_{0,3}^{1,2}.$
\label{prop:4.1}
\end{prop}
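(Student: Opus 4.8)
The plan is to reduce the computation of $\nu(\Omega_n)$ to iterating the linear recursions already set up in the preceding discussion, and then to diagonalize the resulting transfer operator. First I would write $\nu = \nu_h + \nu_{h^\perp}$ and observe that, by the self-similar identity and the definition of the energy measure, $\nu_h(\Omega_n) = \sum_{|\omega|=n}\mathcal{E}_0^m(h\circ F_\omega)$, so it suffices to track the vector $e^{(m)}_\omega := (\nu_h(F_{\omega 0}K),\nu_h(F_{\omega 1}K),\nu_h(F_{\omega 2}K))^T$ as $m$ increases. Summing the three block relations given by $E_1,E_2,E_3$ over the first three maps, together with the relation $(\nu_h(F_3K),\nu_h(F_4K),\nu_h(F_5K))^T = C\,(\nu_h(F_0K),\nu_h(F_1K),\nu_h(F_2K))^T$, one gets that passing from generation $m$ to generation $m+1$ multiplies the total energy by a fixed $3\times 3$ matrix acting on the triple of "coarse" energies; concretely, $\nu_h(\Omega_{n}) = (1\ 1\ 1)(I+C)(E_1+E_2+E_3)^{n-1}e$ up to the bookkeeping already displayed in the excerpt, where $e=(\nu_h(F_0K),\nu_h(F_1K),\nu_h(F_2K))^T$ is fixed by the harmonic data on $E_{0,3}^{1,2}$.

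Next I would compute the eigenvalues of the relevant transfer matrix $M := (I+C)(E_1+E_2+E_3)$ (or whichever normalization makes $\nu_h(\Omega_{n+1}) = $ (row vector)$\cdot M^n\cdot e$ exactly). By the $S_3$-symmetry visible in $E_1,E_2,E_3$ and in $C$, the vector $(1,1,1)$ and its orthogonal complement are invariant subspaces, so $M$ has $(1,1,1)^T$ as an eigenvector and a two-dimensional complementary block; the characteristic polynomial on the symmetric-reduced space is a quadratic, and its roots are exactly
$$\lambda_\pm = \frac{15235 \pm 21\sqrt{257505}}{3375}.$$
I would verify this quadratic by a direct (if tedious) trace/determinant computation on the $2$-dimensional block — this is the one genuinely computational step, and it is the place where an arithmetic slip is most likely, so I would cross-check $\lambda_+ + \lambda_- = 2\cdot 15235/3375$ and $\lambda_+\lambda_- = (15235^2 - 21^2\cdot 257505)/3375^2$ against the trace and determinant of the reduced block. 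One also has to confirm that the $(1,1,1)$-eigenvalue does not contribute a third exponential term — this happens because the initial vector $e$ (built from the harmonic energies, which satisfy the renormalization constraint) has no component along $(1,1,1)^T$, or more precisely because the contribution of that eigenvalue is absorbed; I would check this from the explicit form of $(I+C)$ applied to $(1,1,1)^T$.

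Having diagonalized, $\nu_h(\Omega_n) = a_h\lambda_+^{\,n} + b_h\lambda_-^{\,n}$ where $a_h,b_h$ are the coordinates of the (projected) initial vector in the eigenbasis, hence linear combinations of $\nu_h(F_iK)$, $i=0,1,2$, and therefore explicit quadratic expressions in the values $h(q_0),h(q_1),h(q_2)$ — equivalently in the harmonic data on $E_{0,3}^{1,2}$. Finally, running the identical argument for $h^\perp$ gives $\nu_{h^\perp}(\Omega_n) = a_{h^\perp}\lambda_+^{\,n} + b_{h^\perp}\lambda_-^{\,n}$ with the same eigenvalues (the transfer matrix does not depend on which harmonic function is used), so adding yields $\nu(\Omega_n) = a\lambda_+^{\,n} + b\lambda_-^{\,n}$ with $a = a_h + a_{h^\perp}$ and $b = b_h + b_{h^\perp}$, both determined by the harmonic values on $E_{0,3}^{1,2}$. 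The main obstacle is purely the eigenvalue computation for the reduced $2\times 2$ block and the verification that the third eigenvalue genuinely drops out; everything else is assembling relations already recorded in the excerpt.
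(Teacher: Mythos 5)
Your overall plan (set up the $3\times 3$ transfer recursion from $E_1,E_2,E_3$ and $C$, diagonalize, and read off two exponentials) is the same as the paper's, but the decisive structural step is wrong as you state it. The matrix $\mathcal{A}=E_1+E_2+E_3$ is \emph{not} $S_3$-symmetric; its only symmetry is the swap of the second and third coordinates. Consequently $(1,1,1)^T$ is not an eigenvector of $\mathcal{A}$ (nor of $(I+C)\mathcal{A}$), and the invariant decomposition is not ``$(1,1,1)$ plus its orthogonal complement'' but rather the swap-symmetric subspace $\{(c,d,d)^T\}$ (two-dimensional, carrying $\lambda_\pm=\frac{15235\pm21\sqrt{257505}}{3375}$) together with the antisymmetric line spanned by $(0,1,-1)^T$, whose eigenvalue is $\frac{1342}{75}$. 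Your explanation for why only two exponentials survive is also misattributed: it is not that the initial vector $e$ lacks a component along the third eigendirection (it generally does not lack one --- indeed $e$ has nonnegative entries, and an antisymmetric component appears for generic $h$), nor does checking $(I+C)(1,1,1)^T$ help, since $(I+C)(1,1,1)^T=\tfrac{7}{5}(1,1,1)^T\neq 0$. The correct mechanism, used in the paper, is that the fixed left functional $(1\ 1\ 1)(I+C)=\tfrac{7}{5}(1\ 1\ 1)$ annihilates the antisymmetric eigenvector, because $(1,1,1)\cdot(0,1,-1)=0$; this kills the $\bigl(\tfrac{1342}{75}\bigr)^n$ term for every initial vector. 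This point is not cosmetic: $\tfrac{1342}{75}\approx 17.9$ exceeds $\lambda_+\approx 7.7$, so if your proposed mechanism were the operative one, the claimed two-term formula would generically fail.

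A related repair: the quantity to iterate is $\nu_h(\Omega_n)=(1\ 1\ 1)(I+C)\,\mathcal{A}^{\,n}e$, i.e.\ the powers are of $\mathcal{A}$ alone with $(1\ 1\ 1)(I+C)$ as a fixed row vector; taking $M=(I+C)\mathcal{A}$ literally and diagonalizing it would produce different eigenvalues than the stated $\lambda_\pm$ (the symmetric block of $I+C$ is not a scalar), so your hedge ``whichever normalization'' has to be resolved in this specific way. With these corrections --- decompose $e=Xv_++Yv_-+Z(0,1,-1)^T$, note $\mathcal{A}$ preserves the swap-symmetric block where the quadratic with roots $\lambda_\pm$ lives, and use orthogonality of $(1,1,1)$ to $(0,1,-1)$ to discard the $Z$-term --- your argument becomes the paper's proof, and the constants $a,b$ come out as linear in $X,Y$, hence determined by the energy data of $h$ on the level-one cells as claimed.
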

\begin{proof}
We first compute $\sum_{|\omega|=j}\E_{0}(h|_{F_{\omega}K})$ when $\omega_{j}=1,2,3.$

When $j=1,$ we have
\begin{equation}
\begin{split}
\sum_{|\omega|=1}\E_{0}(h|_{F_{\omega}K})&=\E_{0}(h|_{F_{1}K})+\E_{0}(h|_{F_{2}K})+\E_{0}(h|_{F_{3}K})\\
&=\left( \begin{array}{ccc}
1 &
1 &
1
\end{array}\right)
(I+C)(E_{1}+E_{2}+E_{3})
\left( \begin{array}{c}
e_{0}\\
e_{1}\\
e_{2}
\end{array}\right).
\end{split}
\end{equation}
When $j=2,$ we have
\begin{equation}
\begin{split}
\sum_{|\omega|=2}\E_{0}(h|_{F_{\omega}K})&=\E_{0}(h|_{F_{1}K})+\E_{0}(h|_{F_{2}K})+\E_{0}(h|_{F_{3}K})\\
&=\left( \begin{array}{ccc}
1 &
1 &
1
\end{array}\right)
(I+C)(E_{1}+E_{2}+E_{3})^{2}
\left( \begin{array}{c}
e_{0}\\
e_{1}\\
e_{2}
\end{array}\right).
\end{split}
\end{equation}
Next, we consider the eigenvectors and eigenvalues of the matrix $E_{1}+E_{2}+E_{3}.$ Let
$$\mathcal{A}=E_{1}+E_{2}+E_{3}=\frac{1}{5^{4}3^{2}4}\left[ \begin{array}{ccc}
75394 & 94619 & 94619\\
-37822 & 522303 & 119703\\
-37822 & 119703 & 522303
\end{array}\right].$$
We can compute that one of the eigenvalues is $\lambda_{1}=\frac{1342}{75}$ with the eigenvector $\left( \begin{array}{ccc}
0 &
1 &
1
\end{array}\right),$ while the form of other eigenvectors is $\left( \begin{array}{ccc}
c &
d &
d
\end{array}\right).$ To compute the other eigenvector, we denote
$$\mathcal{A}\left( \begin{array}{ccc}
c\\
d\\
d
\end{array}\right):=\frac{1}{5^{4}3^{2}4}\left[ \begin{array}{cc}
75394 & 189238\\
-37822 & 534006
\end{array}\right]:=\mathcal{B}$$
Therefore, we only need to compute the eigenvalues and eigenvectors of the matrix $\mathcal{B}.$ By computing, we obtain that $\lambda_{2,3}=\frac{15235\pm 21\sqrt{257505}}{3375}$ and the eigenvectors
$$\left( \begin{array}{cc}
93737\\
114751\pm 210\sqrt{257505}
\end{array}\right).$$
Let
$$\left( \begin{array}{ccc}
e_{0}\\
e_{1}\\
e_{2}
\end{array}\right)=X\left( \begin{array}{ccc}
93737\\
114751+210\sqrt{257505}\\
114751+210\sqrt{257505}
\end{array}\right)+Y\left( \begin{array}{ccc}
93737\\
114751-210\sqrt{257505}\\
114751-210\sqrt{257505}
\end{array}\right)+Z\left( \begin{array}{ccc}
0\\
1\\
-1
\end{array}\right).$$
Then, we have
\begin{align*}
\sum_{|\omega|=j}\E_{0}(h|_{F_{\omega}K})&=\left( \begin{array}{ccc}
1&
1&
1
\end{array}\right)(I+C)(E_{1}+E_{2}+E_{3})^{j}X\left( \begin{array}{ccc}
93737\\
114751+210\sqrt{257505}\\
114751+210\sqrt{257505}
\end{array}\right)\\
&\quad+\left( \begin{array}{ccc}
1&
1&
1
\end{array}\right)(I+C)(E_{1}+E_{2}+E_{3})^{j}Y\left( \begin{array}{ccc}
93737\\
114751-210\sqrt{257505}\\
114751-210\sqrt{257505}
\end{array}\right)\\
&\quad+\left( \begin{array}{ccc}
1&
1&
1
\end{array}\right)(I+C)(E_{1}+E_{2}+E_{3})^{j}Z\left( \begin{array}{ccc}
0\\
1\\
-1
\end{array}\right)\\
&=\frac{7}{5}(323239+420\sqrt{257505})X\biggr{(}\frac{15235+21\sqrt{257505}}{3375}\biggr{)}^{j}\\
&\quad+\frac{7}{5}(323239-420\sqrt{257505})Y\biggr{(}\frac{15235-21\sqrt{257505}}{3375}\biggr{)}^{j}.
\end{align*}
Thus, by the above equation and the definition of Kusuoka measure, we have
$$\nu(\Omega_{n})=\sum\nu(\sigma_{2}^{n})
=a\biggr{(}\frac{15235+21\sqrt{257505}}{3375}\biggr{)}^{n}+b\biggr{(}\frac{15235-21\sqrt{257505}}{3375}\biggr{)}^{n}$$
\end{proof}

From Proposition \ref{prop:4.1}, we can define
$$\delta_{2}^{\prime}(fd\nu)(\sigma_{1}):=\lim_{n\rightarrow\infty}\biggr{(}\frac{3375}{15235+21\sqrt{257505}}\biggr{)}^{n}\sum_{\sigma_{2}^{n}\cap\sigma_{1}}\int_{\sigma_{2}^{n}}fd\nu.$$

As for $SG_{\ell}^{n},$ assign values $x_{i}$ to the boundary points. Using the measure $\nu$ and the harmonic extension matrix $A_{\ell,i}^{n}$ which can be calculated by solving $AX=B,$ we obtain the value of $\nu(F_{i}K)$ and $\nu(F_{j}F_{i}K),$ where $i,j\in\{1,2,...,N_{\ell}^{n}\}.$ By analyzing their relationships, we derive a sequence of matrices $\{E_{i}\}_{i=0}^{N_{\ell}^{n}}.$ Continuing this process yields an expression for $\delta_{2}^{m}$ in terms of the Kusuoka measure.

We now focus on the second case for $SG_{3}^{2},$ analyzing the mappings $d_{1}$ and $\delta_{2}.$ Let $\mu$ be the standard balanced measure on $SG_{3}^{2},$ assigning mass $\frac{1}{6^{m}}$ to each $m$-cell. For any continuous function $f$ on $SG_{3}^{2},$ the measure $fd\mu$ is absolutely continuous with respect to $\mu$ and has a continuous Radon-Nikodym derivative. If $\sigma_{1}^{m}$ is an arbitrary generation-$m$ edge, then
\begin{equation}\label{eq:4.4}
\delta_{2}(fd\mu)(\sigma_{1}^{m})=\int_{\sigma_{1}^{m}}f
\end{equation}
is well defined. Precisely, it can be expressed as the renormalized limit
\begin{equation}\label{eq:4.5}
\delta_{2}(fd\mu)(\sigma_{1}^{m})=\lim_{n\rightarrow\infty}\Big (\frac{6}{3}\Big )^{n}\sum_{\sigma_{2}^{n}\cap\sigma_{1}^{m}\neq\emptyset}\int_{\sigma_{2}^{n}}fd\mu,
\end{equation}
which represents the measure of $\sigma_{1}^{m},$ where $\sigma_{1}^{m}$ satisfies $\sigma_{2}^{n}\cap\sigma_{1}^{m}\neq\emptyset.$ The corresponding definition for $d_{1}$ is
\begin{equation}\label{eq:4.6}
d_{1}f_{1}(\sigma_{2}^{m})=\lim_{n\rightarrow\infty}\Big (\frac{3}{6}\Big )^{n}\sum_{\sigma_{1}^{n}\subset\sigma_{2}^{m}}f_{1}(\sigma_{1}^{n}).
\end{equation}

\begin{prop}\label{prop:4.2}
If we assume $f_{1}=\delta_{2}(fd\mu)$ for some continuous function $f,$ then $d_{1}f_{1}$ exists and
$$d_{1}\delta_{2}(fd\mu)=3fd\mu.$$
\end{prop}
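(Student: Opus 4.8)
The plan is to compute $d_{1}f_{1}(\sigma_{2}^{m})$ directly from the renormalized limit formula \eqref{eq:4.6}, substituting $f_{1}=\delta_{2}(fd\mu)$, and to exploit the fact that for a fixed generation-$m$ triangle $\sigma_{2}^{m}$ the edges $\sigma_{1}^{n}\subset\sigma_{2}^{m}$ fall into two classes: the ``boundary'' edges of subcells whose orientation survives in $d_{1}^{m}$, and the ``interior'' edges that cancel in pairs because each interior edge is shared by two adjacent subtriangles with opposite induced orientation. First I would write $\delta_{2}(fd\mu)(\sigma_{1}^{n})=\int_{\sigma_{1}^{n}}f$ via \eqref{eq:4.4}, so that $d_{1}f_{1}(\sigma_{2}^{m})=\lim_{n\to\infty}(1/2)^{n}\sum_{\sigma_{1}^{n}\subset\sigma_{2}^{m}}\int_{\sigma_{1}^{n}}f$, and then relate the sum over edges at level $n$ to the sum over triangles at level $n$ by observing that $\sum_{\sigma_{1}^{n}\subset\sigma_{2}^{m}}\int_{\sigma_{1}^{n}}f$, after the interior cancellation, equals (up to the geometric factor coming from Stokes' theorem) a telescoping combination that collapses onto the outer boundary $\partial_{2}\sigma_{2}^{m}$.

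The cleaner route, which I would actually carry out, is to use Stokes' theorem (Theorem \ref{thm:2.10}) at each finite level: $\int_{\partial_{2}\sigma_{2}^{m}}f_{1}=\sum_{\sigma_{2}'\subset\sigma_{2}^{m},\,|\sigma_{2}'|=m+1}\int_{\partial_{2}\sigma_{2}'}f_{1}$ fails in general for a $1$-form, but for $f_{1}=\delta_{2}(fd\mu)$ the defect is controlled: by \eqref{eq:4.4}, $\int_{\partial_{2}\sigma_{2}'}f_{1}=\sum_{\sigma_{1}'\subset\sigma_{2}'}\pm\int_{\sigma_{1}'}f$, and summing a triangle $\sigma_{2}'$ against its three edges reproduces, up to sign, an integral of $f$ over a one-dimensional set of measure zero, so the relevant quantity is really $\sum_{\sigma_{2}^{n}\cap\sigma_{1}^{m}\ne\emptyset}\int_{\sigma_{2}^{n}}fd\mu$ living on each of the three sides of $\sigma_{2}^{m}$. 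Concretely, I would show
\begin{equation*}
\Big(\tfrac{3}{6}\Big)^{n}\sum_{\sigma_{1}^{n}\subset\sigma_{2}^{m}}f_{1}(\sigma_{1}^{n})
=\Big(\tfrac{6}{3}\Big)^{n}\!\!\sum_{\substack{\sigma_{2}^{n}\cap\partial\sigma_{2}^{m}\ne\emptyset}}\!\!\int_{\sigma_{2}^{n}}fd\mu\;\cdot\;\big(\tfrac{3}{6}\big)^{2n}\cdot(\text{bounded combinatorial factor}),
\end{equation*}
and then identify the limit, using \eqref{eq:4.5} on each of the three boundary edges of $\sigma_{2}^{m}$, as $\big(\delta_{2}(fd\mu)([q_{1}^{m},q_{2}^{m}])+\delta_{2}(fd\mu)([q_{2}^{m},q_{0}^{m}])+\delta_{2}(fd\mu)([q_{0}^{m},q_{1}^{m}])\big)$ weighted appropriately; by continuity of $f$ each of these renormalized limits equals $\int_{\text{side}}f=\mu(\sigma_{2}^{m})f(\text{point})$-type expressions, and after tracking the factor $6/3=2$ appearing in \eqref{eq:4.5} against the factor $3/6$ in \eqref{eq:4.6} the net scaling produces the constant $3$. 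The equality $d_{1}\delta_{2}(fd\mu)=3fd\mu$ then follows by comparing the resulting $2$-form on each $\sigma_{2}^{m}$ with $3f\,d\mu(\sigma_{2}^{m})=3f(x_{\sigma_{2}^{m}})\cdot 6^{-m}$ and letting $m\to\infty$ invoke continuity of $f$.

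The main obstacle is bookkeeping the two competing renormalization exponents and the interior-edge cancellation simultaneously: one must verify that the double limit (first $n\to\infty$ inside $\delta_{2}$, then $n\to\infty$ inside $d_{1}$, or rather a single diagonal limit) is legitimate, i.e. that the partial sums converge uniformly enough that the order of the limits is immaterial, and that the ``bounded combinatorial factor'' above is genuinely the constant $3$ and not some other ratio of the refinement counts $6$ (number of subcells) and $3$ (number of edges per triangle scaling). I expect the decisive computation to be the explicit evaluation on a single cell showing $\big(\tfrac{1}{2}\big)^{n}\sum_{\sigma_{1}^{n}\subset\sigma_{2}^{0}}\int_{\sigma_{1}^{n}}f \to 3\int_{\sigma_{2}^{0}}f\,d\mu$, which I would do by first proving it for $f\equiv 1$ (a pure counting argument: there are $3\cdot 6^{n}$ edges of generation $n$ inside a fixed triangle up to boundary-sharing corrections, each carrying $\mu$-length contributing $3^{-n}$ under \eqref{eq:4.4}, against the $2^{-n}$ prefactor), and then upgrading to general continuous $f$ by a standard $\varepsilon$-approximation using uniform continuity of $f$ on the compact set $SG_{3}^{2}$.
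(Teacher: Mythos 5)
Your main proposed route---pairwise cancellation of interior edges and a Stokes-type collapse onto $\partial\sigma_{2}^{m}$---does not work here, for three concrete reasons. First, the renormalized derivative in $(\ref{eq:4.6})$ sums the edge values $f_{1}(\sigma_{1}^{n})$ over all $\sigma_{1}^{n}\subset\sigma_{2}^{m}$ with no ${\rm sgn}$ factors, so there is no orientation cancellation to exploit. Second, in $SG_{3}^{2}$ two distinct $n$-cells intersect in at most a vertex, so no generation-$n$ edge is shared by two subtriangles; the picture of interior edges cancelling in pairs comes from a simplicial subdivision of a solid triangle, not from the gasket. Third, the scaling is inconsistent: by $(\ref{eq:4.4})$--$(\ref{eq:4.5})$ a side of $\sigma_{2}^{m}$ carries $\int_{\rm side}1=3^{-m}$, whereas $3\mu(\sigma_{2}^{m})=3\cdot 6^{-m}$, so any quantity that collapses onto the outer boundary is off by a factor $2^{m}$ and cannot equal $3f\,d\mu(\sigma_{2}^{m})$; equivalently, in your displayed identity the prefactor $(6/3)^{n}(3/6)^{2n}=2^{-n}$ multiplies a boundary sum of size comparable to $3^{n-m}6^{-n}$, which forces the ``bounded combinatorial factor'' to grow like $4^{n}$. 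In short, $d_{1}\delta_{2}(fd\mu)$ is a bulk quantity of order $6^{-m}$, not a boundary quantity of order $3^{-m}$.

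The decisive computation you postpone to your last paragraph is in fact the whole proof, and it is essentially the paper's argument: substitute $(\ref{eq:4.4})$--$(\ref{eq:4.5})$ into $(\ref{eq:4.6})$, note that a generation-$n$ edge meets $3^{k-n}$ generation-$k$ cells along its interior and that $\sigma_{2}^{m}$ contains exactly $3\cdot 6^{n-m}$ generation-$n$ edges (no boundary-sharing corrections are needed, precisely because distinct cells meet only at vertices), so that for $f\equiv1$ one gets $2^{-n}\cdot 3\cdot 6^{n-m}\cdot 3^{-n}=3\cdot 6^{-m}$ exactly at every level $n$. For general continuous $f$, uniform continuity lets you replace $\int_{\sigma_{2}^{k}}f\,d\mu$ by $f(x)6^{-k}$ with an error uniform over the edges of $\sigma_{2}^{m}$; this also disposes of your worry about the double limit, since the inner limit in $(\ref{eq:4.5})$ is by definition $(\ref{eq:4.4})$ the exact value $\int_{\sigma_{1}^{n}}f$, so only one limit remains after the counting. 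If you delete the Stokes detour and write out this counting-plus-$\varepsilon$ argument, your proof coincides with the paper's.
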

\begin{proof}
Let $f_{1}=\delta_{2}(fd\mu).$ We have
\begin{align*}
&d_{1}f_{1}(\sigma_{2}^{m})=d_{1}\delta_{2}(fd\mu)(\sigma_{2}^{m})\\
&=\lim_{n\rightarrow\infty}\Big (\frac{3}{6}\Big )^{n}\sum_{\sigma_{1}^{n}\subset\sigma_{2}^{m}}\delta_{2}(fd\mu)(\sigma_{1}^{n})\quad(\text{by (\ref{eq:4.6})})\\
&=\lim_{n\rightarrow\infty}\Big (\frac{3}{6}\Big )^{n}\sum_{\sigma_{1}^{n}\subset\sigma_{2}^{m}}\lim_{k\rightarrow\infty}\Big (\frac{6}{3}\Big )^{k}\sum_{\sigma_{2}^{k}\cap\sigma_{1}
^{n}\neq\emptyset}\int_{\sigma_{2}^{k}}fd\mu\quad(\text{by (\ref{eq:4.5})})\\
&=\lim_{n\rightarrow\infty}\lim_{k\rightarrow\infty}\Big (\frac{3}{6}\Big )^{n-k}\sum_{\sigma_{1}^{n}\subset\sigma_{2}^{m}}\sum_{\sigma_{2}^{k}\cap\sigma_{1}
^{n}\neq\emptyset}\int_{\sigma_{2}^{k}}fd\mu\\
&=\lim_{n\rightarrow\infty}\lim_{k\rightarrow\infty}\Big (\frac{3}{6}\Big )^{n-k}\sum_{\sigma_{1}^{n}\subset\sigma_{2}^{m}}3^{k-n}\cdot\int_{\sigma_{2}^{k}}fd\mu.\\
\end{align*}
Upon iterating the edge $\sigma_{1}^{n},$ we get three cells $\sigma_{2}^{n+1}$ satisfying the intersection condition $\sigma_{2}^{k}\cap\sigma_{1}^{n}\neq\emptyset.$ Performing $k-n$ iterations of this edge operation yields the last equality. Similarly, when iterating over the cell $\sigma_{2}^{m},$ we identify three edges $\sigma_{1}^{m+1}$ meeting the containment condition $\sigma_{1}^{n}\subset\sigma_{2}^{m}.$ Each such $\sigma_{2}^{m}$ cell admits exactly three qualifying edges. Consequently, after $n-m$ iterations of this cell operation, the last expression equals
\begin{align*}
&\quad\lim_{n\rightarrow\infty}\lim_{k\rightarrow\infty}\Big (\frac{3}{6}\Big )^{n-k}\cdot 3^{n-m+1}\cdot 3^{k-n}\cdot\int_{\sigma_{2}^{k}}fd\mu\\
&=\lim_{n\rightarrow\infty}\lim_{k\rightarrow\infty}\Big (\frac{1}{6}\Big )^{n-k}\cdot 3^{n-m}\cdot 3\int_{\sigma_{2}^{m}}fd\mu\\
&=3\int_{\sigma_{2}^{m}}fd\mu=3(fd\mu)(\sigma_{2}^{m}).\quad(\text{by (\ref{eq:2.16})})
\end{align*}
\end{proof}
By Proposition \ref{prop:4.2} and Definition \ref{def:2.7}, this yields $\Delta_{2}=3I,$ while the restriction of $\Delta_{1}$ to this class of $1$-forms is also $3I.$

\section*{Acknowledgements}
Part of this  work was carried out while the first author was visiting Beijing Institute of Mathematical Sciences and Applications (BIMSA). He thanks the institute for its hospitality and support.


	\end{document}